\newcommand{\referenza}{}
\theoremstyle{plain}
\newtheorem{theorem}{Theorem}[section]
\newtheorem*{theorem*}{Theorem \referenza}
\newtheorem{definition}[theorem]{Definition}
\newtheorem{lemma}[theorem]{Lemma}
\newtheorem{proposition}[theorem]{Proposition}
\newtheorem{corollary}[theorem]{Corollary}
\newtheorem{remark}[theorem]{Remark}
\newtheorem{remark-question}[section]{Remark-Question}
\newcommand\C{{\mathbb C}}
\newcommand\N{{\mathbb N}}
\newcommand\Q{{\mathbb Q}}
\newcommand\R{{\mathbb R}}
\newcommand\Z{{\mathbb Z}}
\newcommand\frg{{\mathfrak g}}
\newcommand\frh{{\mathfrak h}}
\newcommand\frs{{\mathfrak s}}
\newcommand\Real{{\mathfrak R}{\mathfrak e}\,} 
\newcommand\Imag{{\mathfrak I}{\mathfrak m}\,}
\newcommand\db{{\bar{\partial}}}
\newcommand\GL{{\hbox{\rm Aut}}}
\definecolor{fondo}{rgb}{0.93,0.93,0.93}
\title{Complex structures of splitting type}
\subjclass[2000]{32M10, 53C55, 57T15}
\keywords{Complex structure; splitting type; solvmanifold; Hermitian metric; cohomology; $\partial\db$-manifold}
\thanks{
During the preparation of the work, the first author has been granted by a research fellowship by Istituto Nazionale di Alta Matematica INdAM and by a Junior Visiting Position at Centro di Ricerca ``Ennio de Giorgi''; he is also supported by the Project PRIN ``Varietà reali e complesse: geometria, topologia e analisi armonica'', by the Project FIRB ``Geometria Differenziale e Teoria Geometrica delle Funzioni'', by SNS GR14 grant ``Geometry of non-Kähler manifolds'',  by SIR2014 project RBSI14DYEB ``Analytic aspects in complex and
hypercomplex geometry'' and by GNSAGA of INdAM.
This work has been partially supported by the projects MINECO (Spain) MTM2014-58616-P,
and Gobi\-er\-no de Arag\'on/Fondo Social Europeo, grupo consolidado E15-Geometr\'{\i}a.
The first author would like to warmly thank for the kind hospitality during his stay at Departamento de Matem\'aticas
of the Universidad de Zaragoza.  The last author would like also to thank the Dipartimento di Matematica e Informatica
of Universit\`a degli Studi di Parma (Italia).
We wish to thank D. Andriot and A. Fino for their suggestions and remarks.
We also thank the referees for many useful comments and suggestions that have helped us
to improve the final version of the paper.}
\author{Daniele Angella}
\address[D. Angella]{Dipartimento di Matematica e Informatica ``Ulisse Dini''\\
Universit\`a degli Studi di Firenze\\
viale Morgagni 67/a\\
50134 Firenze, Italy
}
\email{daniele.angella@unifi.it}
\author{Antonio Otal}
\address[A. Otal]{Centro Universitario de la Defensa\,-\,I.U.M.A.\\
Academia General Militar\\
Ctra. de Huesca s/n. 50090 Zaragoza\\
Spain}
\email{aotal@unizar.es}
\author{Luis Ugarte}
\address[L. Ugarte]{Departamento de Matem\'aticas\,-\,I.U.M.A.\\
Universidad de Zaragoza\\
Campus Plaza San Francisco\\
50009 Zaragoza, Spain}
\email{ugarte@unizar.es}
\author{Raquel Villacampa}
\address[R. Villacampa]{Centro Universitario de la Defensa\,-\,I.U.M.A.\\
Academia General Militar\\
Ctra. de Huesca s/n. 50090 Zaragoza\\
Spain}
\email{raquelvg@unizar.es}
\begin{document}

\begin{abstract}
We study the six-dimensional solvmanifolds that admit complex structures of splitting type  classifying the underlying solvable Lie algebras.
In particular, many complex structures of this type exist on the Nakamura manifold $X$, and they allow
us to construct a countable family of compact complex non-$\partial\db$ manifolds $X_k$, $k\in\Z$, that admit a small
holomorphic deformation $\{(X_{k})_{t}\}_{t\in\Delta_k}$ satisfying the $\partial\db$-Lemma for any $t\in\Delta_k$
except for the central fibre.
Moreover, a study of the existence of special Hermitian metrics is also carried out on
six-dimensional solvmanifolds with splitting-type complex structures.
\end{abstract}

\maketitle

\section*{Introduction}

\noindent Let $\frg$ be a real Lie algebra of even dimension. A complex structure on $\frg$ is an endomorphism $J\colon \frg\to\frg$ satisfying $J^2=-\text{Id}_\frg$ and the \emph{Nijenhuis} condition
\begin{equation}\label{nij}
\text{Nij}_J(X,Y):=[JX,JY]-J[JX,Y]-J[X,JY]-[X,Y]=0,\quad\text{for all }X,Y\in\frg.
\end{equation}
An important problem is to find the Lie algebras that admit such a structure. They allow
to construct many interesting examples of compact complex manifolds whenever the simply-connected Lie group $G$ of $\frg$ has a lattice $\Gamma$ of maximal rank. Indeed,
by extending $J$ to the group $G$ and then passing $J$ to the quotient $G/\Gamma$ one obtains nilmanifolds, resp. solvmanifolds, when $G$ is nilpotent, resp. solvable,
endowed with $G$-left-invariant complex structures.
In real dimension four, the solvable Lie algebras admitting a complex structure
have been classified by Ovando in \cite{Ovando},
however no  general result is known in higher dimension.
Focused in six dimensions, Salamon \cite{salamon} classifies the nilpotent Lie algebras that admit a complex structure,
finding eighteen non-isomorphic Lie algebras (see also \cite{ceballos-otal-ugarte-villacampa}).
In \cite{andrada-barberis-dotti} Andrada, Barberis and Dotti obtain
the Lie algebras endowed with a complex structure $J$ of \emph{abelian} type,
i.e. $J$ satisfies $[JX,JY]=[X,Y]$ for all $X,Y\in\frg$. More recently, Fino and the second and third authors \cite{fino-otal-ugarte} classify the 6-dimensional unimodular solvable Lie algebras admitting a complex structure $J$ with non-zero closed $(3,0)$-form $\Psi$.

The existence of a closed nowhere vanishing $(n,0)$-form $\Psi$ on a $2n$-dimensional almost complex manifold automatically implies
the Nijenhuis condition \eqref{nij}, and such complex manifolds have holomorphically trivial canonical bundle.
Nilmanifolds with $G$-left-invariant complex structures are examples of this kind;
in fact, by \cite[Theorem 1.3]{salamon}, for any basis $\{\omega^j\}_{j=1}^n$ of (1,0)-forms on the
underlying nilpotent Lie algebra, the $(n,0)$-form $\Psi=\omega^1\wedge\cdots\wedge\omega^n$ is closed.
However, this is no longer true for general $G$-left-invariant complex structures on solvmanifolds.
In \cite[Proposition 2.1]{fino-otal-ugarte} it is proved that for solvmanifolds,
the existence of a $G$-left-invariant complex structure with holomorphically trivial canonical bundle
is equivalent to the existence of a non-zero closed $(n,0)$-form on the Lie algebra underlying the solvmanifold.
The second author finds in \cite[Chapter 4]{otal-phd} that several of the complex structures with holomorphically trivial canonical bundle on $6$-dimensional solvmanifolds are also of splitting type, i.e. they satisfy \cite[Assumption 1.1]{kasuya-mathz} (see Definition~\ref{def-splitting-type} for details), but that there are other complex structures
that are not of splitting type. 

In addition to providing an important source of examples of compact complex manifolds
with unusual and interesting properties, the complex structures of splitting type have also
interest because they constitute a natural solvable extension of complex nilmanifolds,
as they are certain semi-direct products of the latter by $\mathbb{C}^n$.
In this sense, they allow to investigate to what extent geometric properties of nilmanifolds
still survive in this larger class of homogeneous spaces.  See, e.g., the deformation limits
constructed in \cite{angella-kasuya-2}; compare also the
observation~\cite{kasuya-vaisman} that Oeljeklaus-Toma manifolds are
solvmanifolds of real splitting type endowed with a left-invariant
complex structure, and as such they do not admit Vaisman metrics.
Furthermore, some complex cohomological invariants of the manifold
can be obtained explicitly, which allows to study several aspects of
their complex~\cite{kasuya-jdg, kasuya-mathz, kasuya-hodge,
angella-kasuya-1} and Hermitian~\cite{kasuya-vaisman,
kasuya-geom-formality, FKV} geometry.

One of these invariants are the Dolbeault cohomology groups.
For nilmanifolds, several steps have been done in \cite{CF,CFGU2,R1,R2} towards
the (still open) conjecture that the Dolbeault cohomology of a nilmanifold
with $G$-left-invariant complex structure $J$ can be computed in terms of invariant forms on $G$, i.e. in terms of the pair $(\frg,J)$.
Concerned with the calculus of the Dolbeault cohomology of solvmanifolds, Kasuya \cite{kasuya-mathz} provides a technique to compute such complex invariants when the complex structure is of splitting type. The Dolbeault cohomology groups are obtained by means of a certain finite-dimensional subalgebra of the de Rham complex and, more recently, the first author and Kasuya
develop in \cite{angella-kasuya-1} a technique to compute the Bott-Chern cohomology
by means of another finite-dimensional subalgebra.
These techniques have allowed to study the deformation limits of compact complex $\partial\db$-manifolds \cite{angella-kasuya-2}
and of compact balanced manifolds~\cite{fino-otal-ugarte}.

Our objective in this paper is the complex geometry of 6-dimensional solvmanifolds endowed with
a ($G$-left-invariant) complex structure of splitting type.
The paper is structured as follows. In Section \ref{sec:classification}, we obtain the
solvable Lie algebras that may support a splitting-type complex structure. More concretely, in
Theorem~\ref{thm:main-thm} we prove that if $G/\Gamma$ is a 6-dimensional solvmanifold
endowed with a complex structure $J$ of splitting type, then the Lie algebra $\frg$ of $G$
is isomorphic to $\frs_k$ for some $1 \leq k\leq 12$ (see the list in Theorem~\ref{thm:main-thm}
for a description of the Lie algebras $\frs_k$). Since six of the Lie algebras $\frs_k$ have parameters in their description,
the number of non-isomorphic Lie algebras underlying the solvmanifolds
with splitting-type complex structure is not finite.
In Remark~\ref{remark-lattices} we discuss the existence of lattices.

In Section \ref{sec:hermitian}, we investigate the existence of Hermitian metrics,
with special attention to strong K\"ahler with torsion (SKT) and balanced metrics.
In particular, we obtain SKT structures on solvmanifolds corresponding to $\mathfrak{s}_1$
and we show the existence of balanced structures on the other Lie algebras $\mathfrak{s}_k$ for $2 \leq k\leq 12$
(see Table~\ref{table:summary-metrics}).
A conjecture of Fino and Vezzoni \cite{fino-vezzoni} states that in the compact non-K\"ahler case it is never possible to find
an SKT metric and also a balanced one. In \cite{fino-vezzoni-nilmanifolds} they prove the conjecture for nilmanifolds
and in \cite{fino-vezzoni} for 6-dimensional solvmanifolds having holomorphically trivial canonical bundle.
As a consequence of our study in Section \ref{sec:hermitian}, it turns out that the conjecture also holds for any splitting-type
complex structure on a 6-dimensional solvmanifold.
On the other hand, Popovici proposes in \cite{Pop-2015} a conjecture relating the balanced and the Gauduchon cones of $\partial\db$-manifolds, and he observes that, if proved to hold, the conjecture would imply
the existence of a balanced metric on any $\partial\db$-manifold.
Since solvmanifolds corresponding to $\mathfrak{s}_1$ do not satisfy the $\partial\db$-Lemma,
as another consequence of our study in Section~\ref{sec:hermitian}, one has that
balanced metrics exist on any $\partial\db$-solvmanifold of dimension~6 endowed with a splitting-type
complex structure (see Corollary~\ref{cor-consec}).

Finally, Section \ref{sec:nakamura} is devoted to the complex geometry of the Nakamura manifold
and to the construction of some analytic families of compact complex structures on it.
The Lie algebra underlying the Nakamura manifold is $\mathfrak{s}_{12}$, and the complex-parallelizable
structure given in \cite{nakamura} and the abelian complex structure found in \cite{andrada-barberis-dotti}
are particular examples of splitting-type complex structures.
After classifying, up to equivalence, the splitting-type complex structures on
the Nakamura manifold (see Proposition~\ref{prop:moduli_Nk}),
we prove in Theorem~\ref{canonico_no_abierta}, by an appropriate deformation of its abelian complex structure,  that
the property of \emph{having holomorphically trivial canonical bundle}
and the property of \emph{being of splitting type} are not stable under holomorphic deformations.

Moreover, in Theorem~\ref{thm:non-closedness} we construct, for each $k\in\Z$,
a compact complex manifold $X_k$ that does not satisfy the $\partial\bar\partial$-Lemma, and we prove that
$X_k$ admits a small holomorphic deformation $\{(X_{k})_{t}\}_{t\in\Delta_k}$, $\Delta_k$ being an open disc in $\mathbb{C}$ around $0$,
such that $(X_{k})_{t}$ is a
compact complex $\partial\bar\partial$-manifold for any $t\neq 0$.
For the proof of this result we make use of the complex geometry on $\mathfrak{s}_{12}$,
since the compact complex manifolds $X_k$, $k\in\Z$, and all of their small holomorphic deformations $(X_{k})_{t}$,
$t\in\Delta_k$, are solvmanifolds corresponding to $\mathfrak{s}_{12}$ endowed with complex structures of splitting type.
Furthermore, they all have holomorphically trivial canonical bundle and admit a balanced metric.

When we consider the case $k=-1$, then we recover the main result in \cite{angella-kasuya-2}
because it corresponds precisely to the
complex-parallelizable structure.
So our Theorem~\ref{thm:non-closedness} shows that the result extends to a countable family of complex structures.
Since one of the complex structures (concretely $k=0$) is the abelian one \cite{andrada-barberis-dotti},
we have in particular that the abelian complex structure can be deformed to complex structures satisfying the $\partial\bar\partial$-Lemma.
In other words, the abelian complex structure on the Nakamura manifold (which does not satisfy the $\partial\bar\partial$-Lemma)
is the central limit of an analytic family
of compact complex $\partial\bar\partial$-manifolds.

\section{The Lie algebras underlying the solvmanifolds with complex structures of splitting type}\label{sec:classification}

\noindent
We are concerned with solvmanifolds $X=G/\Gamma$ endowed with a complex structure of \emph{splitting type} in the following sense:

\begin{definition}\label{def-splitting-type}{\cite[Assumption 1.1]{kasuya-mathz}}
A solvmanifold $X = G/\Gamma$ endowed with a $G$-left-invariant complex structure $J$ is said to be of {\em splitting type} if $G$ is a semi-direct product $G=\C^{n}\ltimes_{\varphi}N$ such that:
\begin{enumerate}
 \item\label{item:ass-1} $N$ is a connected simply-connected $2k$-dimensional nilpotent Lie group endowed with an $N$-left-invariant complex structure $J_N$;
 \item\label{item:ass-2} for any $\mathbf{z}\in \C^{n}$, it holds that $\varphi(\mathbf{z})\in \GL(N)$ is a holomorphic automorphism of $N$ with respect to $J_N$;
 \item\label{item:ass-3} $\varphi$ induces a semi-simple action on the Lie algebra $\mathfrak n$ associated to $N$;
 \item\label{item:ass-4} $G$ has a lattice $\Gamma$ (then $\Gamma$ can be written as $\Gamma = \Gamma_{\C^n} \ltimes_{\varphi} \Gamma_{N}$ such that $\Gamma_{\C^n}$ and $\Gamma_{N}$ are  lattices of $\C^{n}$ and $N$, respectively, and, for any $\mathbf{z}\in \Gamma_{\mathbb C^n}$, it holds $\varphi(\mathbf{z})\left(\Gamma_N\right)\subseteq\Gamma_N$);
 \item\label{item:ass-5} the inclusion $\wedge^{\bullet,\bullet}\left(\mathfrak n\otimes_\R\C\right)^* \hookrightarrow \wedge^{\bullet,\bullet}\left(N\slash \Gamma_{N} \right)$ induces the isomorphism in cohomology
$$ H^{\bullet,\bullet}_{\bar\partial}\left(\wedge^{\bullet,\bullet}\left(\mathfrak n\otimes_\R\C\right)^*\right) \stackrel{\cong}{\to} H^{\bullet,\bullet}_{\bar\partial }\left(N\slash \Gamma_{N}\right) \;. $$
\end{enumerate}
\end{definition}

We recall the construction of the complex structure (for further details see \cite{kasuya-mathz}). Let $G=\C^n\ltimes_\varphi N$;
taking $\mathbf{z}=(z_1,\ldots, z_n) \in \C^n$,
we consider $\{dz_1,\ldots,dz_n\}$ the standard $(1,0)$-basis of $\C^n$.
Consider $\{\varphi^1,\ldots,\varphi^k\}$ the $N$-invariant $(1,0)$-basis
such that the induced action  is given by the diagonal matrix
$$
\varphi(\mathbf{z})=\begin{pmatrix}\alpha_1&&\\&\ddots&\\&&\alpha_k\end{pmatrix},
$$
where $\alpha_j\in\text{Hom}(\C^n;\C^*)$ are characters of $\C^n$, $j=1,\ldots,k$.
Then $\{dz_1,\ldots,dz_n, \alpha_1^{-1}\varphi^1,\ldots,\alpha_k^{-1}\varphi^k\}$
is a $G$-invariant $(1,0)$-basis for the complex structure on $G=\C^n\ltimes_\varphi N$.

\subsection{Reduced equations of splitting-type complex structures in dimension~\texorpdfstring{$6$}{6}}\label{subsec:classification-complex}

If the complex dimension of the solvmanifold is $n+k=3$, then we have the following cases:
$G=\C^2\ltimes_\varphi\C$
or $G=\C\ltimes_\varphi N$, where the nilpotent factor $N$ in the semi-direct product has real dimension $4$ and it is endowed with a left-invariant complex structure.
There are only two possibilities for $N$, namely the complex surface $\C^2$ or the real 4-dimensional nilpotent Lie group $KT$ with Lie algebra $\mathfrak{Kt} = \frh_3\oplus\R$ (we denote by $\frh_3$ the real $3$-dimensional Heisenberg Lie algebra) endowed with the left-invariant complex structure defined by a basis of $(1,0)$-forms $\{\tau,\sigma\}$ satisfying
\begin{equation}\label{kod_thurs}
\left\{\begin{array}{l}
d\tau=0,\\[3pt]d\sigma=\tau\wedge\bar\tau.
\end{array}\right.
\end{equation}
The nilmanifold $KT/\Gamma$ endowed with the complex structure \eqref{kod_thurs} is the well-known Kodaira-Thurston manifold.

For the case $\C\ltimes_\varphi N$, either for $N=\C^2$ or $KT$,
the action $\varphi\colon\C\to {\GL}(N)$ will be represented for every $z_3\in\C$ by a diagonal matrix of the form
\begin{equation}\label{phi_dos}
 \varphi(z_3)=\begin{pmatrix}e^{Az_3+B\bar z_3}&0\\0&e^{Cz_3+D\bar z_3}\end{pmatrix},
\end{equation}
where $A,B,C,D\in\C$. For the case $\C^2\ltimes_\varphi\C$, the action is 
given for every $(z_2,z_3)\in\C^2$ by
$$
\varphi(z_2, z_3) = e^{Az_2+B\bar z_2+Cz_3+D\bar z_3},
$$
where $A,B,C,D\in\C$.

\begin{proposition}\label{prop:eq-struttura-1}
Let $X=G/\Gamma$ be a $6$-dimensional solvmanifold endowed with a complex structure of splitting type, and suppose that
$G=\C^2\ltimes_\varphi\C$ or $G=\C\ltimes_\varphi\C^2$. If $\frg$ is the Lie algebra of $G$, then
there is a basis $\{\omega^1,\omega^2,\omega^3\}$ for $(\frg^{1,0})^{\ast}$ satisfying the complex structure equations
\begin{equation*}
\left\{\begin{array}{l}
 d\omega^1=A\,\omega^{13}+B\omega^{1\bar3},\\
 d\omega^2= -(A+\bar B+\varepsilon)\,\omega^{23}+\varepsilon\,\omega^{2\bar3},\\
d\omega^3=0,
\end{array}\right.
\end{equation*}
for some $A,B\in\C$ and $\varepsilon\in\{0,1\}$.  (Here, and in what follows, $\omega^{\bar k}$ stands for $\overline{\omega^k}$.)
\end{proposition}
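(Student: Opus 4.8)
The plan is to build the coframe by specializing the general construction recalled after Definition~\ref{def-splitting-type} to each of the two semidirect products, read off the resulting structure equations, and then reduce the parameters to the stated normal form using two ingredients: the rescaling freedom in the distinguished closed direction, and the unimodularity of $\frg$. The latter is available because item~\eqref{item:ass-4} of Definition~\ref{def-splitting-type} guarantees a lattice $\Gamma$, and a Lie group carrying a lattice is necessarily unimodular; hence $\frg$ is unimodular.

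For $G=\C\ltimes_\varphi\C^2$ I would set $\omega^3=dz_3$ and $\omega^1=\alpha_1^{-1}\varphi^1$, $\omega^2=\alpha_2^{-1}\varphi^2$, with $\alpha_1=e^{Az_3+B\bar z_3}$, $\alpha_2=e^{Cz_3+D\bar z_3}$ as in \eqref{phi_dos}. Since $\C^2$ is abelian, $d\varphi^1=d\varphi^2=0$, and differentiating the factors $\alpha_j^{-1}$ gives at once
\[
d\omega^1=A\,\omega^{13}+B\,\omega^{1\bar3},\qquad d\omega^2=C\,\omega^{23}+D\,\omega^{2\bar3},\qquad d\omega^3=0 .
\]
The rescaling $\omega^3\mapsto\nu\,\omega^3$ sends $(A,B,C,D)\mapsto(A\nu^{-1},B\bar\nu^{-1},C\nu^{-1},D\bar\nu^{-1})$ while preserving the shape of these equations, so I can normalize $D$ to $\varepsilon\in\{0,1\}$, which is real. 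Computing the trace of the adjoint action on the two real generators of the $\C$-factor (the nilpotent directions contribute nothing) shows that unimodularity amounts to two real relations; once $D=\varepsilon$ is real these combine into the single complex identity $A+\bar B+C+\varepsilon=0$, that is $C=-(A+\bar B+\varepsilon)$, which is exactly the asserted normal form.

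The case $G=\C^2\ltimes_\varphi\C$ is the delicate one, and where I expect the main obstacle. Here the nilpotent factor is $N=\C$ with a single $(1,0)$-form $\varphi^1$, so with $\eta^1=\alpha^{-1}\varphi^1$, $\eta^2=dz_2$, $\eta^3=dz_3$ the whole differential concentrates on one form,
\[
d\eta^1=\eta^1\wedge\theta,\qquad \theta=A\,\eta^2+C\,\eta^3+B\,\bar\eta^2+D\,\bar\eta^3,\qquad d\eta^2=d\eta^3=0,
\]
which is visibly not of the target shape. To collapse this onto a single direction I must take $\omega^3$ proportional to the holomorphic part $A\eta^2+C\eta^3$ of $\theta$; this succeeds precisely when the antiholomorphic part $B\bar\eta^2+D\bar\eta^3$ is proportional to $\bar\omega^3$, i.e. when $A\bar D=C\bar B$. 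This proportionality is not automatic, but the unimodularity computation forces exactly $B=-\bar A$ and $D=-\bar C$, which makes $\theta$ purely imaginary. Then, scaling $\omega^3=A\eta^2+C\eta^3$ yields $\theta=\omega^3-\bar\omega^3$, and taking $\omega^1=\eta^1$ together with a closed $\omega^2$ completing $\{\omega^3\}$ to a basis of the span of $\eta^2,\eta^3$ gives $d\omega^1=\omega^{13}-\omega^{1\bar3}$ and $d\omega^2=0$; this is the normal form with $A=1$, $B=-1$, $\varepsilon=0$, and the relation $C=-(A+\bar B+\varepsilon)$ reads $0=-(1-1+0)$. (If $A=C=0$, unimodularity forces $B=D=0$ and $\frg$ is abelian, the trivial instance.)

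Finally I would note that in both cases the computed equations carry no $(0,2)$-component, so integrability is automatic, and that the only genuinely non-formal point is the recognition that unimodularity is precisely what renders the Case $\C^2\ltimes_\varphi\C$ connection form $\theta$ purely imaginary, permitting the reduction to the single direction $\omega^3$.
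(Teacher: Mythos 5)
Your proposal is correct and follows essentially the same route as the paper's proof: in both cases you write down the invariant coframe coming from the splitting construction, impose unimodularity (the paper phrases it as closedness of $(3,2)+(2,3)$-forms, you as vanishing traces of the adjoint action, which give the same relations $A+\bar B+C+\bar D=0$, resp. $A+\bar B=0$, $C+\bar D=0$), and then normalize by rescaling $\omega^3$, resp. by the change of basis $\omega^3=A\eta^2+C\eta^3$ in the $\C^2\ltimes_\varphi\C$ case. The only differences are cosmetic (order of rescaling versus imposing unimodularity, and your explicit remark that the lattice forces unimodularity, which the paper leaves implicit).
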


\begin{proof}
Let $G=\C\ltimes_\varphi N$ be the semi-direct product where the action $\varphi\colon\C\to{\GL}(N)$ is given by the matrix \eqref{phi_dos}, once fixed a $(1,0)$-coframe for $N$.
We are considering the case $N=\C^2$. Hence, $\varphi(z_3)$ is automatically an automorphism of $\C^2$ and the complex structure
on $G$ is determined by the global $G$-invariant $(1,0)$-basis
$\{\omega^1=e^{-Az_3-B\bar z_3}dz_1,\,\omega^2=e^{-Cz_3-D\bar z_3}dz_2,\,\omega^3=dz_3\}$.
The complex structure equations in the basis $\{\omega^1,\omega^2,\omega^3\}$ are
$$
d\omega^1=A\omega^{13}+B\omega^{1\bar3},\quad
d\omega^2=C\omega^{23}+D\omega^{2\bar3},\quad
d\omega^3=0.
$$
The unimodularity of $G$ is equivalent to the condition $d(\wedge^{3,2}\frg^*\oplus\wedge^{2,3}\frg^*)=\{0\}$,
which forces $A+\bar B+C+\bar D=0$.
Clearly, if $D=0$, then $C=-A-\bar B$.
Now, if $D\neq 0$ then, up to scaling $\omega^3$, we can suppose that $D$ is equal to 1 and
so $C=-A-\bar B-1$, arriving at the desired structure equations.

\smallskip

Consider next the case $G=\C^2\ltimes_\varphi\C$.
In this case we have a $(1,0)$-coframe $\{\eta^1,\eta^2,\eta^3\}$
given by $\{\eta^1=e^{-Az_2-B\bar z_2-Cz_3-D\bar z_3}dz_1,\,\eta^2=dz_2,\,\eta^3=dz_3\}$. Hence, the structure equations are
$$
d\eta^1 = A\eta^{12}+B\eta^{1\bar2}+C\eta^{13}+D\eta^{1\bar3}, \quad\
d\eta^2 = d\eta^3 = 0 .
$$
The unimodularity condition is equivalent to $A + \bar B=0$ and $C + \bar D=0$.
Thus, we can consider $(A,C)\neq (0,0)$, because otherwise $\varphi$ is trivial.
Now, if $A\neq 0$ (similarly for $C\neq 0$ when $A=0$) then the change of basis $\{\omega^1=\eta^1,\,\omega^2=\eta^3,\,\omega^3=A\eta^2+C\eta^3 \}$
provides the structure equations $d\omega^1 = \omega^{13}-\omega^{1\bar3}$ and $d\omega^2 = d\omega^3 = 0$.
\end{proof}

\begin{proposition}\label{prop:eq-struttura-2}
Let $X=G/\Gamma$ be a $6$-dimensional solvmanifold endowed with a complex structure
of splitting type, and suppose that $G=\C\ltimes_\varphi KT$.
Then, there is a $(1,0)$-basis $\{\omega^1,\omega^2,\omega^3\}$ satisfying the complex structure equations
\begin{equation*}
\left\{\begin{array}{l}
 d\omega^1=\varepsilon\,(\omega^{13}-\omega^{1\bar3}),\\[3pt]
 d\omega^2=\omega^{1\bar1},\\[3pt]
 d\omega^3=0,
\end{array}\right.
\end{equation*}
where $\varepsilon\in\{0,1\}$.
\end{proposition}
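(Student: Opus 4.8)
The plan is to follow the strategy of Proposition~\ref{prop:eq-struttura-1}: produce the explicit $G$-invariant $(1,0)$-coframe supplied by the splitting-type construction, read off the structure equations, impose the constraints forced by $d^2=0$ and by unimodularity (the latter being available because the lattice $\Gamma$ exists), and finally pass to a normal form by a change of coframe.

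First I would write down the coframe. Using the $(1,0)$-basis $\{\tau,\sigma\}$ of $KT$ satisfying \eqref{kod_thurs} and the diagonal action \eqref{phi_dos} with characters $e^{Az_3+B\bar z_3}$ and $e^{Cz_3+D\bar z_3}$, the recipe recalled after Definition~\ref{def-splitting-type} gives the global $G$-invariant coframe
\[
\{\omega^1=e^{-Az_3-B\bar z_3}\,\tau,\ \omega^2=e^{-Cz_3-D\bar z_3}\,\sigma,\ \omega^3=dz_3\}.
\]
Differentiating and using $d\tau=0$, $d\sigma=\tau\wedge\bar\tau$ should give $d\omega^3=0$, $d\omega^1=A\,\omega^{13}+B\,\omega^{1\bar3}$, and, for $\omega^2$, the terms $C\,\omega^{23}+D\,\omega^{2\bar3}$ coming from the exponential factor together with a term $e^{(A+\bar B-C)z_3+(B+\bar A-D)\bar z_3}\,\omega^{1\bar1}$ coming from $d\sigma$. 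The key observation here is that left-invariance of the structure equations --- equivalently $d^2\omega^2=0$, equivalently the requirement that $\varphi(z_3)$ be a genuine automorphism of $\mathfrak{Kt}$ --- forces the exponent to vanish, i.e. $C=A+\bar B$ and $D=\bar A+B$, after which the coefficient of $\omega^{1\bar1}$ is exactly $1$.

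Next I would impose unimodularity of $G$. Reading off the structure equations, the derivation induced by the $\C$-factor acts diagonally on the complexified coframe of $\mathfrak{Kt}$, with $\omega^1,\omega^2,\omega^{\bar1},\omega^{\bar2}$ contributing $A,\,C,\,\bar B,\,\bar D$ to its trace, so unimodularity amounts to $A+C+\bar B+\bar D=0$. Substituting $C=A+\bar B$ and $D=\bar A+B$, this collapses to $3(A+\bar B)=0$; hence $B=-\bar A$ and therefore $C=D=0$. The structure equations then reduce to $d\omega^1=A\,\omega^{13}-\bar A\,\omega^{1\bar3}$, $d\omega^2=\omega^{1\bar1}$, $d\omega^3=0$.

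Finally I would normalize. If $A=0$ we are already in the case $\varepsilon=0$. If $A\neq0$, replacing $\omega^3$ by $A\,\omega^3$ --- a legitimate rescaling of the $(1,0)$-coframe that leaves $d\omega^2$ and $d\omega^3$ unchanged --- turns $d\omega^1$ into $\omega^{13}-\omega^{1\bar3}$, giving $\varepsilon=1$. I expect the unimodularity computation to be the delicate point: one must compute the trace of the (complex) derivation correctly and check that it genuinely forces $C=D=0$, which is exactly what kills the action on the $\sigma$-direction and makes the normal form so rigid. By contrast, the $d^2=0$ bookkeeping for $d\omega^2$ is routine, though it must be carried out carefully to confirm that the coefficient of $\omega^{1\bar1}$ is precisely $1$ rather than an arbitrary nonzero constant.
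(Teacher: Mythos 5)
Your later steps are sound: the relations $C=A+\bar B$, $D=\bar A+B$, the unimodularity condition collapsing to $A+\bar B=0$ (hence $C=D=0$), and the rescaling $\omega^3\mapsto A\,\omega^3$ all check out and give exactly the stated normal form. The genuine gap is at the very first line: you take the basis in which $\varphi$ acts diagonally to be the preferred basis $\{\tau,\sigma\}$ of \eqref{kod_thurs}. The splitting-type hypothesis (semisimplicity of the action) only provides diagonality in \emph{some} $(1,0)$-basis, i.e.\ in a basis $P\cdot(\tau,\sigma)^{T}$ with $P\in\text{GL}(2,\C)$, and the recipe after Definition~\ref{def-splitting-type} must be applied in \emph{that} basis; a general $P$ destroys the equations \eqref{kod_thurs}, so your coframe formula silently assumes $P=\mathrm{Id}$. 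Handling general $P$ is precisely what occupies most of the paper's proof: one expresses the term coming from $d\sigma=\tau\wedge\bar\tau$ through $Q=P^{-1}$ and the characters, and the requirement that the structure equations have constant coefficients forces $p_{12}=0$ or $p_{22}=0$ together with constancy of $\alpha_2\alpha_1^{-1}\bar\alpha_1^{-1}$ (resp.\ $\alpha_1\alpha_2^{-1}\bar\alpha_2^{-1}$); only then does one land, after rescaling, on equations of your form. The gap is fillable by a structural remark you never make: the closed $(1,0)$-forms on $\mathfrak{Kt}$ are exactly the multiples of $\tau$, so every holomorphic automorphism satisfies $\varphi(z_3)^*\tau=a\,\tau$ and $\varphi(z_3)^*\sigma=c\,\tau+|a|^2\sigma$; hence a simultaneous eigenbasis can be chosen of the form $\{\tau,\ \sigma+\lambda\tau\}$, which satisfies the same equations \eqref{kod_thurs}. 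Without some such argument, your proof covers only a special case.

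A second, more local error: your ``key observation'' is misstated. For the explicit coframe you wrote down, $d^2\omega^2=0$ holds \emph{identically}, whatever $A,B,C,D$ are --- the $z_3$-dependence of the coefficient $e^{(A+\bar B-C)z_3+(B+\bar A-D)\bar z_3}$ of $\omega^{1\bar 1}$ cancels exactly against the contribution of $d\omega^{1\bar1}$ --- so $d^2=0$ forces nothing. What does force $C=A+\bar B$ and $D=\bar A+B$ is the other condition you cite, namely that each $\varphi(z_3)$ be a holomorphic automorphism of $(\mathfrak{Kt},J_N)$ (equivalently, that the coefficient of $\omega^{1\bar1}$ be constant): the identity $\varphi(z_3)^*(d\sigma)=d\bigl(\varphi(z_3)^*\sigma\bigr)$ reads $\alpha_2=|\alpha_1|^2$. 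This is also how the paper uses the Jacobi identity, but only \emph{after} reducing to constant-coefficient equations. Since you do invoke the automorphism requirement, the conclusion survives, but the claimed equivalence with $d^2\omega^2=0$ should be deleted.
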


\begin{proof}
%
%
%
%
The semisimple action induced by $\varphi$ on $\mathfrak{Kt}$ assures the existence of a basis for $\mathfrak{Kt}$
such that the action is diagonal. So, we can take a basis of the form
$$ P \cdot \begin{pmatrix}\tau\\ \sigma\end{pmatrix} , \qquad \text{ where } \qquad
P\;=\;\begin{pmatrix}p_{11}&p_{12}\\p_{21}&p_{22}\end{pmatrix}\in \text{GL}(2,\mathbb C) \;$$
and $\{\tau,\sigma\}$ is the preferred basis of $(1,0)$-forms with structure equations \eqref{kod_thurs}.

Denote also
$$Q \;:=\; P^{-1} \;=\;\begin{pmatrix}q_{11}&q_{12}\\q_{21}&q_{22}\end{pmatrix}
\;=\; \frac{1}{p_{11}p_{22}-p_{12}p_{21}}
\begin{pmatrix}p_{22}&-p_{12}\\-p_{21}&p_{11}\end{pmatrix}
\;.$$

With respect to this basis, we can assume that the action $\varphi$ is diagonal and given by the inverse of the matrix~\eqref{phi_dos},
which we will denote simply by $\alpha$.
So, the invariant basis we choose is
$$ \left\{\omega^1, \omega^2, \omega^3:= dz_3  \right\} , \qquad \text{ where }
\qquad \begin{pmatrix}\omega^1\\
\omega^2\end{pmatrix}=\alpha\cdot P \cdot \begin{pmatrix}\tau\\ \sigma\end{pmatrix} \;.$$

Since
$$d\alpha \;=\; -\alpha \cdot \mathcal{E}, \qquad \text{ where } \qquad
\mathcal{E} \;:=\; \begin{pmatrix}A\,\omega^3 + B\,\omega^{\bar 3}&0\\0&C\,\omega^3 + D\,\omega^{\bar 3}\end{pmatrix} \;,$$
whence we get the structure equations (here $\wedge$ is intended componentwise):
\begin{eqnarray*}
d\begin{pmatrix}\omega^1\\\omega^2\end{pmatrix}&=& d\alpha\wedge P \cdot
\begin{pmatrix}\tau\\ \sigma\end{pmatrix}+ \alpha\cdot P \cdot d
\begin{pmatrix}\tau\\\sigma\end{pmatrix} \\[5pt]
&=& -\alpha\cdot\mathcal E \wedge \alpha^{-1} \cdot
\begin{pmatrix}\omega^1\\\omega^2\end{pmatrix}+\alpha\cdot P \cdot
\begin{pmatrix}0\\\tau\wedge\bar\tau\end{pmatrix}\\[5pt]
&=& -\begin{pmatrix} \left(A\omega^3+B \omega^{\bar3}\right) \wedge \omega^1\\
\left( C\omega^3+D\omega^{\bar3}\right) \wedge \omega^2\end{pmatrix}
+\alpha\cdot P \cdot \left(
\begin{pmatrix}0\\\tau\end{pmatrix}\wedge\begin{pmatrix}0\\\bar\tau\end{pmatrix}
\right) \\
&=& \begin{pmatrix} \omega^1 \wedge \left(A\omega^3+B \omega^{\bar3}\right) \\
\omega^2 \wedge \left( C\omega^3+D\omega^{\bar3}\right) \end{pmatrix}
+\alpha\cdot P \cdot \left(
\begin{pmatrix}0\\\tau\end{pmatrix}\wedge\begin{pmatrix}0\\\bar\tau\end{pmatrix}
\right).
\end{eqnarray*}

Since there is no dependence on $\alpha$ in the first term, it is well-defined for any value of the parameters $A,B,C,D\in \C$.

As for the second term:
\begin{eqnarray*}
\lefteqn{\alpha\cdot P \cdot \left( \begin{pmatrix}0\\\tau\end{pmatrix}\wedge \begin{pmatrix}0\\ \bar\tau\end{pmatrix}\right)}\\[5pt]
&=& \alpha\cdot \left(P \cdot \begin{pmatrix}0\\ q_{11}\alpha_1^{-1}\omega^1+q_{12}\alpha_2^{-1}\omega^2 \end{pmatrix} \right) \wedge \left( \bar P \cdot \begin{pmatrix}0\\\bar q_{11}\bar\alpha_1^{-1}\omega^{\bar1}+\bar q_{12}\bar\alpha_2^{-1}\omega^{\bar2}\end{pmatrix} \right) \\[5pt]
&=& \alpha\cdot \left(\begin{pmatrix}p_{12}\cdot\left(q_{11}\alpha_1^{-1}\omega^1+q_{12}\alpha_2^{-1}\omega^2\right)\\ p_{22}\cdot\left(q_{11}\alpha_1^{-1}\omega^1+q_{12}\alpha_2^{-1}\omega^2\right) \end{pmatrix} \wedge \begin{pmatrix}\bar p_{12}\cdot\left(\bar q_{11}\bar\alpha_1^{-1}\omega^{\bar1}+\bar q_{12}\bar\alpha_2^{-1}\omega^{\bar2}\right)\\\bar p_{22}\cdot\left(\bar q_{11}\bar\alpha_1^{-1}\omega^{\bar1}+\bar q_{12}\bar\alpha_2^{-1}\omega^{\bar2}\right)\end{pmatrix} \right) \\[5pt]
&=&\frac{1}{\left|p_{11}p_{22}-p_{12}p_{21}\right|^2} \begin{pmatrix}
  \alpha_1 \cdot |p_{12}|^2
 \\[5pt]
 \alpha_2 \cdot |p_{22}|^2
 \end{pmatrix} \cdot \omega\;,
\end{eqnarray*}
where $$\omega =
 \alpha_1^{-1}\bar\alpha_1^{-1}|p_{22}|^2\omega^{1\bar1}
 -
 \alpha_1^{-1}\bar\alpha_2^{-1}p_{22}\bar p_{12}\omega^{1\bar2}
 -
 \bar\alpha_1^{-1}\alpha_2^{-1}p_{12}\bar p_{22}\omega^{2\bar1}
 +
 \alpha_2^{-1}\bar\alpha_2^{-1}|p_{12}|^2\omega^{2\bar2}.
$$

So,
\begin{equation*}\label{dif_kt}
d\begin{pmatrix}\omega^1\\\omega^2\end{pmatrix}= \begin{pmatrix} \left(A\omega^3+B \omega^{\bar3}\right) \wedge \omega^1\\
\left( C\omega^3+D\omega^{\bar3}\right) \wedge \omega^2\end{pmatrix} + \frac{1}{\left|p_{11}p_{22}-p_{12}p_{21}\right|^2} \begin{pmatrix}
  \alpha_1 \cdot |p_{12}|^2
 \\[5pt]
 \alpha_2 \cdot |p_{22}|^2
 \end{pmatrix} \cdot \omega.
\end{equation*}

Now, we have to take care about the dependence on $z_3$ of the terms in the expression above. Note that the case $(A,B,C,D)=(0,0,0,0)$ is trivial, that is, it yields just the product. 
Let us assume $(A,B)\neq (0,0)$.  The term
$$\alpha_1\cdot \frac{|p_{12}|^2}{\left|p_{11}p_{22}-p_{12}p_{21}\right|^2} \cdot\alpha_1^{-1}\bar\alpha_1^{-1}|p_{22}|^2\omega^{1\bar1} \;=\; \bar\alpha_1^{-1} \cdot \frac{|p_{12}|^2|p_{22}|^2}{\left|p_{11}p_{22}-p_{12}p_{21}\right|^2} \omega^{1\bar1} \; $$
contains $\bar\alpha_1^{-1}$ that depends on $z_3$. So either $p_{12}=0$ or $p_{22}=0$.
If $p_{22}=0$, then $\omega = \alpha_2^{-1}\bar\alpha_2^{-1}|p_{12}|^2\omega^{2\bar2}$ and therefore we have to assume
$ \alpha_1\alpha_2^{-1}\bar\alpha_2^{-1}$ to be constant.
Up to rescale $p_{12}$, we may assume
$$ \alpha_1\alpha_2^{-1}\bar\alpha_2^{-1} \cdot \frac{|p_{12}|^2}{\left|p_{21}\right|^2} \;=\; 1 \;, $$
getting in this case the structure equations 
\begin{equation}\label{kt:case1}
 d
 \begin{pmatrix}
         \omega^1\\\omega^2
        \end{pmatrix}
 =
\begin{pmatrix} \left(A\omega^3+B \omega^{\bar3}\right) \wedge \omega^1\\
\left( C\omega^3+D \omega^{\bar3}\right) \wedge \omega^2\end{pmatrix}+
\begin{pmatrix}
  \omega^{2\bar2}
 \\[5pt]
 0
 \end{pmatrix}\;.
\end{equation}

On the other hand, if $p_{12}=0$, then $\omega =  \alpha_1^{-1}\bar\alpha_1^{-1}|p_{22}|^2\omega^{1\bar1}$ and we get the necessary assumption that
$\alpha_2 \alpha_1^{-1}\bar\alpha_1^{-1}$ is constant. Moreover, up to rescaling, we may assume
$$
\alpha_2 \alpha_1^{-1}\bar\alpha_1^{-1}\cdot \frac{|p_{22}|^2}{\left|p_{11}\right|^2} \;=\; 1 \;,
$$
which reduces the structure equations to 
\begin{equation}\label{kt:case2}
 d
\begin{pmatrix}
         \omega^1\\\omega^2
        \end{pmatrix}
 =
\begin{pmatrix} \left(A\omega^3+B\omega^{\bar3}\right) \wedge \omega^1\\
\left( C\omega^3+D \omega^{\bar3}\right) \wedge \omega^2\end{pmatrix}+
\begin{pmatrix}
  0
 \\[5pt]
 \omega^{1\bar1}
 \end{pmatrix} \;.
\end{equation}

In case $(C,D)\neq(0,0)$, we look at the term
$$ \alpha_2 \cdot \frac{|p_{22}|^2}{\left|p_{11}p_{22}-p_{12}p_{21}\right|^2} \cdot\alpha_2^{-1}\bar\alpha_2^{-1}|p_{12}|^2\omega^{2\bar2} \;=\; \bar\alpha_2^{-1} \cdot\frac{|p_{12}|^2|p_{22}|^2}{\left|p_{11}p_{22}-p_{12}p_{21}\right|^2} \omega^{2\bar2} $$
and we argue in the same way as before.
If $p_{22}=0$, then
we are reduced to the structure equations~\eqref{kt:case1}, whereas if
$p_{12}=0$, then
we are reduced to the structure equations~\eqref{kt:case2}.

Note that, with reference, e.g., to the second case~\eqref{kt:case2}, the Jacobi condition yields the equations
$$ A+\bar B-C \;=\; D-\bar C \;=\;0 \;. $$
Now the unimodularity condition is then equivalent to the equation
$\bar A + B = 0$. Finally, if $A\not=0$ then we can suppose that it is equal to 1 after rescaling $\omega^3$.
\end{proof}

For the sake of clearness, we summarize Proposition \ref{prop:eq-struttura-1}
and Proposition \ref{prop:eq-struttura-2} in the following statement.

\begin{theorem}\label{prop:eq-struttura}
Let $X=G/\Gamma$ be a $6$-dimensional solvmanifold endowed with a complex structure of splitting type.
Then, there is a co-frame $\{\omega^1,\omega^2,\omega^3\}$  of invariant $(1,0)$-forms satisfying the complex structure equations
\begin{equation}\label{split_C2}
\left\{\begin{array}{l}
 d\omega^1=A\,\omega^{13}+B\,\omega^{1\bar3},\\[3pt]
 d\omega^2= -(A+\bar B+\varepsilon)\,\omega^{23}+\varepsilon\,\omega^{2\bar3},\\[3pt]
d\omega^3=0
\end{array}\right.
\end{equation}
or
\begin{equation}\label{split_kt}
\left\{\begin{array}{l}
 d\omega^1=\varepsilon\,(\omega^{13}-\omega^{1\bar3}),\\[3pt]
d\omega^2=\omega^{1\bar1},\\[3pt]
 d\omega^3=0,
\end{array}\right.
\end{equation}
where $A,B\in\C$ and $\varepsilon\in\{0,1\}$.
\end{theorem}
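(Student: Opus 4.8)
The plan is to read this theorem as the assembly of the two preceding propositions, so that the only genuine task is to check that Propositions~\ref{prop:eq-struttura-1} and~\ref{prop:eq-struttura-2} between them exhaust every splitting-type structure in complex dimension~$3$. First I would unwind Definition~\ref{def-splitting-type}: a splitting-type structure presents $G$ as $\C^n\ltimes_\varphi N$ with $N$ nilpotent of complex dimension $k$ and total complex dimension $n+k=3$. With a nontrivial abelian factor $n\geq 1$, this forces $k\leq 2$, so $N$ is a nilpotent Lie group of real dimension at most $4$ carrying an $N$-left-invariant complex structure.

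Next I would invoke the classification recalled before the propositions: the only such $N$ are $\C$ (when $k=1$) and, when $k=2$, either $\C^2$ or the Kodaira--Thurston group $KT$ with Lie algebra $\mathfrak{Kt}=\frh_3\oplus\R$. This yields exactly three semidirect products to consider, namely $\C^2\ltimes_\varphi\C$, $\C\ltimes_\varphi\C^2$, and $\C\ltimes_\varphi KT$.

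Finally I would dispatch each case to the appropriate proposition. The two cases with abelian nilpotent factor, $\C^2\ltimes_\varphi\C$ and $\C\ltimes_\varphi\C^2$, are precisely the hypotheses of Proposition~\ref{prop:eq-struttura-1}, which produces the structure equations~\eqref{split_C2}; the remaining case $\C\ltimes_\varphi KT$ is the hypothesis of Proposition~\ref{prop:eq-struttura-2}, which produces~\eqref{split_kt}. Since the trivial action, and hence the abelian torus $\C^3$, is recovered from~\eqref{split_C2} by setting $A=B=\varepsilon=0$, no case is lost, and the two families~\eqref{split_C2} and~\eqref{split_kt} together cover all possibilities.

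The substantive content of the argument sits entirely inside the two propositions, where the reductions are carried out via unimodularity, the Jacobi identity, and rescaling of the coframe, so at the level of this theorem there is no real obstacle. The one point demanding care is exhaustiveness: I must be certain the list $\{\C,\C^2,KT\}$ of admissible nilpotent factors is complete, which rests on the fact that $\frh_3\oplus\R$ is the unique non-abelian nilpotent Lie algebra of dimension~$4$ admitting a complex structure.
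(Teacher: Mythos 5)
Your proof is correct and takes essentially the same route as the paper: there, Theorem~\ref{prop:eq-struttura} is explicitly introduced as a summary of Propositions~\ref{prop:eq-struttura-1} and~\ref{prop:eq-struttura-2}, whose hypotheses arise from the same case division $\C^2\ltimes_\varphi\C$, $\C\ltimes_\varphi\C^2$, $\C\ltimes_\varphi KT$ obtained by classifying the possible nilpotent factors $N$ of real dimension $2$ or $4$ with left-invariant complex structure. Your explicit verification of exhaustiveness (uniqueness of $\frh_3\oplus\R$ as the non-abelian option, and the degenerate abelian case $A=B=\varepsilon=0$) merely spells out what the paper leaves implicit in the paragraph preceding Proposition~\ref{prop:eq-struttura-1}.
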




\medskip

\begin{remark}\label{remark-trivialbundle}
{\rm
We note that for a complex structure in~\eqref{split_C2},
the canonical bundle is holomorphically trivial if and only if $B=-\varepsilon$.
Indeed, by \cite[Proposition 2.1]{fino-otal-ugarte}, since the complex structure is left-invariant,
a nowhere vanishing holomorphic $(3, 0)$-form on $X=G/\Gamma$
is necessarily invariant, but a direct calculation shows that $d\omega^{123}=(B+\varepsilon)\omega^{123 \bar 3}$.
Similarly, for a complex structure in~\eqref{split_kt}, one has that $d\omega^{123}=-\varepsilon\, \omega^{123 \bar 3}$,
so the canonical bundle is holomorphically trivial if and only if $\varepsilon=0$.
We show below which are the Lie algebras underlying such solvmanifolds.
}
\end{remark}

\subsection{Six-dimensional solvable Lie algebras with complex structures of splitting type}\label{subsec:classification-algebras}

In this section we determine the $6$-dimensional real Lie algebras underlying the reduced equations of splitting-type complex structures
obtained in the previous section. For simplicity, we introduce the following definition.

\begin{definition}\label{def-alg-Lie-splitting-type}
{\rm  We will say that}
$\frg$ admits a complex structure of splitting type
{\rm if $\frg$ is a real Lie algebra underlying the complex
equations~\eqref{split_C2} or~\eqref{split_kt} in Theorem~\ref{prop:eq-struttura}.}
\end{definition}

Recall that those Lie algebras underlying the complex equations~\eqref{split_kt} correspond to Lie groups of the form $\C\ltimes_\varphi KT$,  whereas the Lie algebras underlying~\eqref{split_C2} correspond to
$\C^2\ltimes_\varphi\C$ or $\C\ltimes_\varphi\C^2$.


The main result in this section is the following theorem.

\begin{theorem}\label{thm:main-thm}
Let $\frg$ be a unimodular (non-nilpotent) solvable Lie algebra of dimension $6$.
Then, $\frg$ admits a complex structure of splitting type if and only if it is isomorphic to one in the following list:

\begin{itemize}
\item[] $\mathfrak s_1= (e^{23}, e^{34}, -e^{24},0,0,0)$,\\[-5pt]
\item[] $\mathfrak s_2= (0, -e^{13}, e^{12}, 0,0,0)$,\\[-5pt]
\item[] $\mathfrak s_3= (0, -e^{13}, e^{12}, 0,-e^{46},e^{45})$,\\[-5pt]
\item[] $\mathfrak s_4=(e^{15}, -e^{25}, -e^{35},e^{45},0,0)$,\\[-5pt]
\item[] $\mathfrak s_5^{\alpha}=(e^{15}, e^{25}, -e^{35}+\alpha\,e^{45},-\alpha\,e^{35}-e^{45},0,0)$,\quad $\alpha>0$,\\[-5pt]
\item[] $\mathfrak s_6^{\alpha,\beta}= (\alpha\,e^{15} + e^{25}, -e^{15}+\alpha\,e^{25}, -\alpha\,e^{35} + \beta\,e^{45}, -\beta\,e^{35}-\alpha\,e^{45},0,0)$,\\ \phantom{$\mathfrak s_6^{\alpha,\beta}=$} $\alpha>0,\ \, 0<\beta<1$,\\[-5pt]
\item[] $\mathfrak s_7^{\alpha}= (e^{25}, -e^{15}, \alpha\,e^{45}, -\alpha\,e^{35},0,0)$,\quad $0<\alpha\leq 1$,\\[-5pt]
\item[] $\mathfrak s_8^{\alpha}= (\alpha\,e^{15} + e^{25}, -e^{15}+\alpha\,e^{25}, -\alpha\,e^{35} + e^{45}, -e^{35}-\alpha\,e^{45},0,0)$,\quad $\alpha> 0$,\\[-5pt]
\item[] $\mathfrak s_{9}= (-e^{16}, -e^{26}, e^{36}-e^{45} ,e^{35}+e^{46},0,0)$,\\[-5pt]
\item[] $\mathfrak s_{10}^{\alpha, \beta}=(e^{15}+\beta\,e^{16}-e^{26}, e^{16}+e^{25}+\beta\,e^{26}, -e^{35}-\beta\,e^{36}-\alpha\,e^{45}, \alpha\,e^{35}-e^{45}-\beta\,e^{46},0,0)$,\\
\phantom{$\mathfrak s_{10}^{\alpha, \beta}=$} $\alpha\neq 0$,\quad $\beta\in\mathbb R$,\\[-5pt]
\item[] $\mathfrak s_{11}^{\alpha}=(e^{16}-e^{25}, e^{15}+e^{26}, -e^{36}-\alpha\,e^{45}, \alpha\,e^{35}-e^{46},0,0)$,\quad $\alpha\in (0,1)$,\\[-5pt]
\item[] $\mathfrak s_{12}=(e^{16}-e^{25}, e^{15}+e^{26}, -e^{36}+e^{45}, -e^{35}-e^{46},0,0)$.\\[-5pt]
\end{itemize}
\end{theorem}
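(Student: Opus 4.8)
The plan is to read off both implications from the reduced structure equations of Theorem~\ref{prop:eq-struttura}: I would convert the complex equations \eqref{split_C2} and \eqref{split_kt} into real Lie brackets and then classify the resulting real $6$-dimensional solvable Lie algebras up to isomorphism, matching them against the list.

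For the ``only if'' direction I would fix the real coframe $e^{2j-1}:=\Real\,\omega^j$, $e^{2j}:=\Imag\,\omega^j$ for $j=1,2,3$, and expand the equations into the real structure equations $de^1,\dots,de^6$. In both families $de^5=de^6=0$ and each $de^i$ with $i\le 4$ is a wedge of an $e^j$ ($j\le 4$) with $e^5$ or $e^6$ only; hence $\mathfrak g$ is a semi-direct product $\R^2\ltimes_\rho\mathfrak n$ with $\mathfrak n=\langle e_1,e_2,e_3,e_4\rangle$ an abelian ideal (case \eqref{split_C2}) or $\mathfrak n=\mathfrak{Kt}$ (case \eqref{split_kt}), and $\R^2=\langle e_5,e_6\rangle$ acting through the two commuting derivations $D_5=\mathrm{ad}_{e_5}$, $D_6=\mathrm{ad}_{e_6}$. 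Since the reduced equations already satisfy Jacobi and the unimodularity relation, the whole isomorphism problem reduces to classifying the pair $(D_5,D_6)$, equivalently the $2$-plane they span in $\mathrm{Der}(\mathfrak n)$, up to $\mathrm{Aut}(\mathfrak n)$-conjugacy together with the $\mathrm{GL}(2,\R)$-action reparametrizing $\langle e_5,e_6\rangle$ (and the freedom of rescaling $\omega^3$).

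Concretely, for \eqref{split_C2} assumption~\eqref{item:ass-2} forces $D_5,D_6$ to be $\C$-linear on $\mathfrak n$, hence simultaneously diagonalizable over $\C$: in the coframe $\{\omega^1,\omega^2\}$ the operator $D_5$ has eigenvalues $A+B$ and $-(A+\bar B)$, and $D_6$ has the corresponding purely imaginary combinations $i(A-B)$ and so on; the trace of each vanishes automatically by the very form of \eqref{split_C2}, which is where unimodularity enters. I would then run through the configurations of the attached characters $w_1,w_2\colon\R^2\to\C$ (whether the eigenvalues are real or genuinely complex, whether $w_1,w_2$ are proportional, and whether either vanishes), normalize by the $\mathrm{GL}(2,\R)\times(\text{swap})$-action and the rescaling of $\omega^3$, and read off $de^1,\dots,de^6$ in each normal form. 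This produces exactly $\mathfrak s_2,\dots,\mathfrak s_{12}$, the residual moduli $\alpha,\beta$ being the eigenvalue ratios that cannot be normalized away; the stated ranges ($\alpha>0$, $0<\beta<1$, $0<\alpha\le1$, etc.) are chosen precisely so that the list is irredundant. The case \eqref{split_kt} is shorter: $\varepsilon=0$ yields a nilpotent algebra, excluded by hypothesis, while $\varepsilon=1$ reduces after rescaling to $\mathfrak s_1$.

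For the ``if'' direction I would reverse the computation, exhibiting for each $\mathfrak s_k$ an explicit admissible choice of $(A,B,\varepsilon)$ (and of the semi-direct product $\C^2\ltimes_\varphi\C$, $\C\ltimes_\varphi\C^2$, or $\C\ltimes_\varphi KT$) together with its invariant $(1,0)$-coframe, certifying that $\mathfrak s_k$ carries a splitting-type complex structure. I expect the main obstacle to be the bookkeeping in the classification step: one must verify both that the normal forms are exhaustive and that they are \emph{pairwise} non-isomorphic over $\R$. The latter requires separating the families by isomorphism invariants --- for instance the dimensions of $[\mathfrak g,\mathfrak g]$ and of the nilradical, and the characteristic polynomial of a generic element of $\langle D_5,D_6\rangle$ (real versus complex conjugate eigenvalues) --- and then checking that within each family the prescribed parameter ranges contain no repetitions.
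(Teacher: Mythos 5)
Your strategy coincides with the paper's own: the paper also takes the reduced equations of Theorem~\ref{prop:eq-struttura} as the starting point, passes to a real coframe, classifies the resulting real algebras by normalizing $(A,B,\varepsilon)$ case by case, and proves the ``if'' direction exactly as you propose, by exhibiting explicit admissible values of $(A,B,\varepsilon)$ for each $\frs_k$. Your reformulation in terms of the pair of commuting $\C$-linear semisimple derivations $(D_5,D_6)=(\mathrm{ad}_{e_5},\mathrm{ad}_{e_6})$ on $\mathfrak n$, modulo $\mathrm{Aut}(\mathfrak n)$-conjugacy and the $\mathrm{GL}(2,\R)$-action on $\langle e_5,e_6\rangle$, is a clean repackaging of the same normalization problem; for instance, the paper's case divisions ($\Imag A=\Imag B$ or not, $\Delta(A,B)=0$ or not) are precisely degeneracy conditions on your character data.

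The gap is that the proposal stops exactly where the proof begins. The sentence ``I would then run through the configurations\dots\ This produces exactly $\mathfrak s_2,\dots,\mathfrak s_{12}$'' asserts the theorem rather than proving it: in the paper this step is the content of Propositions~\ref{prop_KT} and~\ref{prop2}, Lemmas~\ref{lema imA=imB=0}, \ref{lema imA=imB no0}, \ref{caso_dep} and~\ref{caso_indep} together with the explicit basis changes of Tables~\ref{tabla2}--\ref{tabla6}, plus Appendix~\ref{apendice} reducing the parameter ranges (e.g.\ $\frs_7^{\alpha}\cong\frs_7^{1/\alpha}$, $\frs_{11}^{\alpha}\cong\frs_{11}^{-\alpha}\cong\frs_{11}^{1/\alpha}$), and none of this is routine: identifying which $\frs_k$ arises requires non-obvious auxiliary coframes (the forms $\nu^5,\nu^6$, the invariant $\Delta$, the quantities $X,Y$). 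Two further inaccuracies: (i) your claim that in both families every $de^i$, $i\le 4$, is a wedge of some $e^j$ with $e^5$ or $e^6$ fails for~\eqref{split_kt}, where $d\omega^2=\omega^{1\bar 1}$ produces a bracket landing inside $\mathfrak n=\mathfrak{Kt}$ (harmless, since that case only yields $\frs_1$, but false as stated); (ii) the assertion that ``the whole isomorphism problem reduces to'' classifying $(D_5,D_6)$ up to conjugacy and $\mathrm{GL}(2,\R)$ is only one-directional --- conjugate pairs give isomorphic algebras, but isomorphisms need not preserve the splitting $\R^2\ltimes\mathfrak n$ (for $\frs_2$ the nilradical is $5$-dimensional, so the splitting is not canonical), which is why irredundancy of the list genuinely requires the independent invariants you mention, or, as the paper does, identification with the Turkowski--Bock classification.
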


Here we follow the notation in \cite{salamon}. For example, by writing $(e^{23}, e^{34}, -e^{24},0,0,0)$ we mean that there exists a basis $\{e^1, \dots, e^6\}$ of the dual of the Lie algebra satisfying $de^1 = e^2\wedge e^3$, $de^2 = e^3\wedge e^4$, $de^3 = -e^2\wedge e^4$, and $de^4 = de^5 = de^6=0$. 

\medskip

\begin{remark}
{\rm For detailed explanations on the values of the parameters in the list above, see Appendix~\ref{apendice}.
}
\end{remark}

Let us start by determining the Lie algebras underlying the equations~\eqref{split_kt}.

\begin{proposition}\label{prop_KT}
The Lie algebras $\frg$ that admit a complex structure of splitting type corresponding to $\C\ltimes_\varphi KT$
are $(0,0,0,0,0,e^{12})$ if $\frg$ is nilpotent, and $\mathfrak s_1$ if $\frg$ is solvable but non-nilpotent.
\end{proposition}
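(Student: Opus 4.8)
The plan is to translate the complex structure equations \eqref{split_kt} into equations on a real coframe and then recognise the resulting real Lie algebra in each of the two cases $\varepsilon=0$ and $\varepsilon=1$. First I would set $\omega^1=e^1+\sqrt{-1}\,e^2$, $\omega^2=e^3+\sqrt{-1}\,e^4$ and $\omega^3=e^5+\sqrt{-1}\,e^6$, so that $\{e^1,\dots,e^6\}$ becomes a real basis of $\frg^*$. Expanding the relevant wedge products gives $\omega^{1\bar1}=-2\sqrt{-1}\,e^{12}$ and $\omega^{13}-\omega^{1\bar3}=-2\,e^{26}+2\sqrt{-1}\,e^{16}$; separating real and imaginary parts of each equation in \eqref{split_kt} then yields the real structure equations
\[
de^1=-2\varepsilon\,e^{26},\quad de^2=2\varepsilon\,e^{16},\quad de^3=0,\quad de^4=-2\,e^{12},\quad de^5=de^6=0 .
\]
Note that solvability of $\frg$ is automatic in both cases, since $G=\C\ltimes_\varphi KT$ is a semi-direct product of the abelian group $\C$ with the nilpotent group $KT$.

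For $\varepsilon=0$ the only non-trivial differential is $de^4=-2\,e^{12}$. After rescaling $e^4$ and permuting the indices $4\leftrightarrow 6$ one recognises the structure equations $(0,0,0,0,0,e^{12})$, which is $\frh_3\oplus\R^3$ and hence $2$-step nilpotent. This settles the nilpotent case.

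For $\varepsilon=1$ I would exhibit an explicit linear change of coframe bringing the equations into the normal form of $\mathfrak s_1=(e^{23},e^{34},-e^{24},0,0,0)$. Setting $\tilde e^1=\tfrac12 e^4$, $\tilde e^2=e^1$, $\tilde e^3=-e^2$, $\tilde e^4=2e^6$, $\tilde e^5=e^3$, $\tilde e^6=e^5$, a routine verification gives $d\tilde e^1=\tilde e^{23}$, $d\tilde e^2=\tilde e^{34}$, $d\tilde e^3=-\tilde e^{24}$ and $d\tilde e^4=d\tilde e^5=d\tilde e^6=0$, i.e.\ exactly $\mathfrak s_1$. To confirm that $\frg$ is not nilpotent (so that it is genuinely different from the $\varepsilon=0$ algebra), I would pass to the dual bracket, where \eqref{split_kt} for $\varepsilon=1$ gives $[X_1,X_6]=-2X_2$, $[X_2,X_6]=2X_1$ and $[X_1,X_2]=2X_4$; thus $\operatorname{ad}_{X_6}$ restricts to the plane spanned by $X_1,X_2$ as a non-zero semisimple endomorphism with eigenvalues $\pm 2\sqrt{-1}$, which is not nilpotent. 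Hence $\frg\cong\mathfrak s_1$ is solvable but non-nilpotent.

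All the computations involved are elementary. The main point requiring genuine care is producing the correct rescaling and linear change of coframe in the $\varepsilon=1$ case, and checking that the nilpotent/non-nilpotent dichotomy matches precisely the two values $\varepsilon\in\{0,1\}$; once the real structure equations are written down, the identification is then just bookkeeping. A minor subtlety worth recording is that no extra constraint from the Jacobi identity (equivalently $d^2=0$) can appear beyond those already contained in \eqref{split_kt}, since these equations arise from an honest complex structure produced in Theorem~\ref{prop:eq-struttura}.
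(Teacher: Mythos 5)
Your proof is correct and follows essentially the same route as the paper's: the paper simply exhibits the coframe $\omega^1=e^3-\sqrt{-1}\,e^2$, $\omega^2=2(e^5-\sqrt{-1}\,e^1)$, $\omega^3=\tfrac12(e^6-\sqrt{-1}\,e^4)$ realizing \eqref{split_kt} with $\varepsilon=1$ directly on the $\mathfrak s_1$ basis, which is the inverse presentation of your change of coframe, and treats $\varepsilon=0$ as immediate. Your explicit real structure equations, the verification of the change of basis, and the non-nilpotency check via the eigenvalues $\pm 2\sqrt{-1}$ of $\operatorname{ad}_{X_6}$ are all accurate (the last point is left implicit in the paper, since $\mathfrak s_1$ is known to be non-nilpotent).
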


\begin{proof}
It is clear that one obtains $(0,0,0,0,0,e^{12})$ if $\varepsilon=0$ in~\eqref{split_kt}.
For $\varepsilon=1$,
if we consider the basis $\{e^1,\ldots,e^6 \}$ given by $\omega^1=e^3-i\,e^2$, $\omega^2=2(e^5-i\,e^1)$ and $\omega^3=\frac12(e^6-i\,e^4)$,
then it is immediate to see that the real Lie algebra is $\mathfrak s_1$.
\end{proof}






We divide the study of equations~\eqref{split_C2} according to the vanishing of coefficient $\varepsilon$. As a result we present several tables (see Tables~\ref{tabla2}, \ref{tabla3}, \ref{tabla_imA=imBneq0}, \ref{tabla5} and \ref{tabla6}).  There, the real basis $\{e^1,\ldots, e^6\}$ is the one that corresponds to the real structure equations in Theorem~\ref{thm:main-thm}.


\begin{proposition}\label{prop2}
The 
Lie algebras underlying equations~\eqref{split_C2} with $\varepsilon=0$
are $\mathfrak s_2$, $\mathfrak s_{9}$, $\mathfrak s_{10}^{\alpha, \beta}$, $\mathfrak s_{11}^{\alpha}$, $\mathfrak s_{12}$.
\end{proposition}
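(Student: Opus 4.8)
Here is how I would approach Proposition~\ref{prop2}.

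The plan is to read off the underlying real Lie algebra from the reduced equations and then classify it by the linear-algebraic data of an abelian action. First I would fix an auxiliary real coframe $\{e^1,\dots,e^6\}$ adapted to $J$ by $\omega^1=e^1+\sqi\,e^2$, $\omega^2=e^3+\sqi\,e^4$, $\omega^3=e^5+\sqi\,e^6$ (this frame need not be the one appearing in the normal forms), and write $A=a_1+\sqi\,a_2$, $B=b_1+\sqi\,b_2$. Setting $\varepsilon=0$ in \eqref{split_C2} and separating real and imaginary parts turns the three complex equations into six real ones in which every $de^j$ is a combination of $e^{k5}$ and $e^{k6}$ only. Hence $\langle e_1,\dots,e_4\rangle$ is an abelian ideal and $\langle e_5,e_6\rangle$ an abelian complement, so $\frg=\R^2\ltimes_\rho\R^4$ with $\rho$ given by the two commuting derivations $D_5:=\operatorname{ad}(e_5)$ and $D_6:=\operatorname{ad}(e_6)$ on $\R^4$. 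By the semisimplicity assumption~\eqref{item:ass-3} in Definition~\ref{def-splitting-type}, these derivations are semisimple, and for such an ``abelian by abelian'' algebra the isomorphism class is governed by the conjugacy class of the commutative subalgebra $\frt':=\langle D_5,D_6\rangle\subseteq\mathfrak{gl}(4,\R)$ together with $\dim\frt'$; this is the invariant I would extract.

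Next I would make $\rho$ explicit. Both derivations commute with $J$ and preserve the two $J$-invariant planes $\langle e_1,e_2\rangle$ and $\langle e_3,e_4\rangle$, on each of which they act as multiplication by a complex number. A direct computation gives the joint eigenvalues (weights) $\lambda_1=(A+B,\ \sqi(A-B))$ on the first plane and $\lambda_2=(-(A+\bar B),\ -\sqi(A+\bar B))$ on the second; the absence of the $\omega^{2\bar 3}$ coefficient, together with the unimodularity relation already exploited in Proposition~\ref{prop:eq-struttura-1}, forces $\lambda_2$ to be ``holomorphic'', i.e.\ proportional over $\C$ to $(1,\sqi)$. The isomorphism class of $\frg$ is then the configuration $\{\lambda_1,\lambda_2\}$ in $(\frt')^*_\C$, taken modulo the change of basis $\mathrm{GL}(2,\R)$ in the acting plane $\langle e_5,e_6\rangle$, the block swap $\omega^1\leftrightarrow\omega^2$ permuting the two weights, the conjugation $J\mapsto-J$, and the real rescaling of $\omega^3$ that scales $(A,B)$ simultaneously.

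I would then run the case analysis on $(A,B)$, recording for each regime an explicit real frame bringing \eqref{split_C2} to a normal form of Theorem~\ref{thm:main-thm}. If $A+\bar B=0$ the second plane becomes central while $D_5,D_6$ are both rotations of the first, so $\dim\frt'=1$ and $\frg\cong\mathfrak{e}(2)\oplus\R^3=\mathfrak{s}_2$. Otherwise $\dim\frt'=2$, and I would subdivide by the type of $\lambda_1$ relative to the always-holomorphic $\lambda_2$: the locus $B=\bar A$ makes $\lambda_1$ real (one plane diagonalizes over $\R$, the other stays genuinely complex), giving $\mathfrak{s}_9$; the pure loci $A=0$ or $B=0$ make both planes rotational with balanced real parts, yielding $\mathfrak{s}_{12}$ when the two rotation speeds coincide after the $\mathrm{GL}(2,\R)$ normalization and $\mathfrak{s}_{11}^\alpha$ (with $\alpha\in(0,1)$ the ratio of speeds) otherwise; and the remaining generic stratum, where $\lambda_1$ carries both nonzero real and imaginary parts, produces the two-parameter family $\mathfrak{s}_{10}^{\alpha,\beta}$. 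The degenerate point $A=B=0$ gives abelian $\R^6$ and is discarded as nilpotent.

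The main obstacle is not the conceptual reduction but the bookkeeping in this last step: for each regime one must exhibit the correct real change of basis (precisely the entries collected in Tables~\ref{tabla2}--\ref{tabla6}) and then pin down the parameter ranges by tracking the discrete equivalences—the block swap, the conjugation $J\mapsto-J$, and the scaling of $\omega^3$. It is these identifications that collapse the raw parameters into the normalized ranges $0<\alpha<1$ for $\mathfrak{s}_{11}^\alpha$ and $\alpha\neq0$, $\beta\in\R$ for $\mathfrak{s}_{10}^{\alpha,\beta}$, and that must be verified to guarantee the list is simultaneously complete and non-redundant; I would relegate the detailed treatment of the ranges to Appendix~\ref{apendice}. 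Finally I would confirm unimodularity and non-nilpotency throughout, so that the output is exactly $\mathfrak{s}_2$, $\mathfrak{s}_9$, $\mathfrak{s}_{10}^{\alpha,\beta}$, $\mathfrak{s}_{11}^\alpha$, $\mathfrak{s}_{12}$.
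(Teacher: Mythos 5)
Your reduction to the weight configuration of the abelian action is sound, and the weights you compute are correct: $\lambda_1=(A+B,\,i(A-B))$ on $\langle e_1,e_2\rangle$ and $\lambda_2=-(A+\bar B)(1,i)$ on $\langle e_3,e_4\rangle$. This is an invariant-theoretic repackaging of what the paper does more concretely (the paper rescales $\omega^3$ so that $A=-1-\bar B$ and then runs through the values of the single remaining parameter $B$). The genuine problem is in your final case analysis, which misplaces $\frs_{11}^{\alpha}$.

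First, on your "pure loci" $A=0$ or $B=0$ the "otherwise" branch is empty: if $B=0$ then $\lambda_2=-\lambda_1$, and if $A=0$ then $\lambda_2=-\bar\lambda_1$, so the rotation speeds of the two planes agree identically, for every element of the acting $\R^2$; these loci give only $\frs_{12}$, and hence $\frs_{11}^{\alpha}$ never arises anywhere in your scheme. Second, your "remaining generic stratum" does not produce only $\frs_{10}^{\alpha,\beta}$. Note that $\Real\lambda_2=-\Real\lambda_1$ holds identically, while $\Imag\lambda_1$ and $\Imag\lambda_2$ are proportional exactly when $\Imag(AB)=0$; this is precisely the condition under which the action simultaneously block-diagonalizes into (rotations)$\,\oplus\,$(real scalings), and it cuts your generic stratum into two genuinely different pieces. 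Concretely, take $A=1$, $B=2$: this satisfies $B\neq\bar A$, $AB\neq 0$, so it lies in your generic stratum, yet the paper's normalization gives $B'=-B/(\bar A+B)=-2/3\in\R$, whence the algebra is $\frs_{11}^{1/3}$ (via $\alpha'=-1-2B'$ and the reductions of Appendix~\ref{apendice}), not $\frs_{10}^{\alpha,\beta}$. The correct stratification for $A+\bar B\neq 0$, $(A,B)\neq(0,0)$ is: $\Imag(AB)\neq 0$ gives $\frs_{10}^{\alpha,\beta}$; $\Imag(AB)=0$ with $B=\bar A$ gives $\frs_9$; $\Imag(AB)=0$ with $A=0$ or $B=0$ gives $\frs_{12}$; and $\Imag(AB)=0$ with $AB\neq 0$, $B\neq\bar A$ gives $\frs_{11}^{\alpha}$. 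As written, your argument misclassifies this last locus as $\frs_{10}^{\alpha,\beta}$ and could never certify that $\frs_{11}^{\alpha}$ belongs to the list, so the case analysis must be redone along these lines (or by the paper's normalization) before the proof is complete.
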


\begin{proof}
Suppose first that $B=-\bar A$ in~\eqref{split_C2}, so we can suppose that $A\neq 0$.
Moreover, taking  $\{\omega'^1= \omega^1,\, \omega'^2= \omega^2,\,\omega'^3= A\omega^3\}$  we can suppose that $A=-B=1$.  If we set $\omega^1=e^3+ie^2,\, \omega^2=e^4+ie^5,\,\omega^3=e^6+\frac{i}{2}e^1$, then we obtain the structure equations of $\frs_2$.

On the other hand, if $B\neq -\bar A$, observe that we can normalize the coefficient in $\omega^{23}$ just
by taking a new basis $\{\omega'^1= \omega^1,\, \omega'^2= \omega^2,\,\omega'^3= -(A + \bar B)\omega^3\}$.  

If we denote $\omega'^1=\alpha^1 + i\alpha^2,\, \omega'^2=\alpha^3 + i\alpha^4,\, \omega'^3=\alpha^5 + i\alpha^6$, then the real structure equations become $d\alpha^5 = d\alpha^6=0$ and
\begin{equation*}
\left\{\begin{array}{lll}
d\alpha^1&=&-\alpha^{15}- 2\,\Imag B\,\alpha^{25}+(1+2\,\Real B)\,\alpha^{26},\\[5pt]
d\alpha^2&=&2\,\Imag B\,\alpha^{15}- (1+2\,\Real B)\,\alpha^{16}-\alpha^{25},\\[5pt]
d\alpha^3&=&\alpha^{35}-\alpha^{46},\\[5pt]
d\alpha^4&=&\alpha^{36}+\alpha^{45}.\end{array}\right.
\end{equation*}

It is straightforward to see that if $B=-\frac12$, the Lie algebra is isomorphic to $\frs_{9}$ (take $e^i = \alpha^i,\, i=1,2,3,4,\, e^5 = \alpha^6,\, e^6 = \alpha^5$).  If we consider the real basis $e^1 = \alpha^3,\, e^2 = \alpha^4,\, e^3 = \alpha^1,\, e^4 = \alpha^2,\, e^5 = \alpha^6,\, e^6 = \alpha^5$, then the Lie algebra is isomorphic to $\frs_{12}$ if $B=0$ and  $\frs_{11}^{\alpha'}$, for $\alpha' = -1-2 B$
, if $B\in\R\setminus\{-\frac12, 0\}$.
Notice that $\alpha'\in \R\setminus\{-1, 0\}$ and hence the Lie algebra $\frs_{11}^{\alpha'}$
is isomorphic to the Lie algebra $\mathfrak s_{11}^{\alpha}$ for some $\alpha \in (0,1)$, 
as it appears in Theorem~\ref{thm:main-thm} (see Appendix~\ref{apendice} for details).
If $B=-1$, taking  $e^1 = -\alpha^3,\, e^2 = -\alpha^4,\, e^3 = \alpha^2,\, e^4 = \alpha^1,\, e^5 = \alpha^6,\, e^6 = \alpha^5$ we obtain the Lie algebra $\mathfrak s_{12}$.
Finally, if $\Imag B\neq 0$, then with respect to the real basis $e^1 = \alpha^3,\, e^2 = \alpha^4,\, e^3 = \alpha^1,\, e^4 = \alpha^2,\, e^5 = \alpha^5-\frac{1+2\,\Real B}{2\,\Imag B}\alpha^6,\, e^6 = \alpha^6$, we obtain $\frs_{10}^{\alpha, \beta}$ where $\alpha = 2\,\Imag B\neq 0$ and $\beta = \frac{1+2\,\Real B}{2\,\Imag B}$.
\end{proof}

In Table~\ref{tabla2} we summarize the results obtained in the previous proposition.

\renewcommand{\arraystretch}{1.3}

\renewcommand{\arraystretch}{1.3}
\begin{table}[!ht]
\resizebox{\textwidth}{!}{
\begin{tabular}{|c|c|c|l|c|}
\hline
\multicolumn{3}{|c|}{$A,B \in \mathbb{C}$} & Real basis $\{e^1,\ldots,e^6\}$ & Lie algebra\\
\hline
\multicolumn{3}{|c|}{\multirow{2}{*}{$A=-\bar B\not=0$}}&$\omega^1=e^3+ie^2,\, \omega^2=e^4+ie^5$,&\multirow{2}{*}{$\frs_2$}\\
\multicolumn{3}{|c|}{}&$\,\omega^3=e^6+\frac{i}{2}e^1$&\\
\cline{1-5}
\multirow{9}{*}{$A=-1-\bar B$}&\multirow{8}{*}{$B\in\R$}&\multirow{2}{*}{$B=-1$}&$\omega^1 = e^4 + ie^3,\, \omega^2 = -e^1 - ie^2,$&\multirow{2}{*}{$\frs_{12}$}\\
&&&$\omega^3 = e^6+ ie^5$&\\ \cline{3-5}
&&\multirow{2}{*}{$B=-\frac12$}&$\omega^1 = e^1 + ie^2,\,\, \omega^2 = e^3 + ie^4,$&\multirow{2}{*}{$\frs_9$}\\
&\multirow{5}{*}{     }&&$\omega^3 = e^6+ ie^5$&\\ \cline{3-5}
&&\multirow{2}{*}{$B=0$}&$\omega^1 = e^3 + ie^4,\, \omega^2 = e^1 + ie^2,$&\multirow{2}{*}{$\frs_{12}$}\\
&&&$\omega^3 = e^6+ ie^5$&\\ \cline{3-5}
&&\multirow{2}{*}{$B\neq -1, -\frac12, 0$}&$\omega^1 = e^3 + ie^4,\, \omega^2 = e^1 + ie^2,$&$\frs_{11}^{\alpha'}$\\
&&&$\omega^3 = e^6+ ie^5$&$\alpha' = -1-2B$ \\ \cline{2-5}
&\multicolumn{2}{c|}{$\Imag B\neq 0$}&$\omega^1 = e^3 + ie^4,\, \omega^2 = e^1 + ie^2,$&$\frs_{10}^{\alpha, \beta}$\\
&\multicolumn{2}{c|}{   }&$\omega^3=\left(e^5+\frac{1+2\,\Real B}{2\,\Imag B}\,e^6\right)+ie^6$&$\alpha = 2\,\Imag B,\,
\beta=\frac{1+2\,\Real B}{2\,\Imag B}$\\[4pt]\hline
\end{tabular}}
\medskip
\caption{Lie algebras underlying equations~\eqref{split_C2} with $\varepsilon=0$
(Proposition~\ref{prop2}).} \label{tabla2}  
\end{table}


From now on, we focus on the equations~\eqref{split_C2} with $\varepsilon=1$. Let us consider
the basis of real 1-forms $\{\alpha^1,\ldots,\alpha^6\}$ given by 
\begin{equation}\label{base-alfa}\omega^1=\alpha^1 + i\alpha^2,\quad \omega^2=\alpha^3 + i\alpha^4,\quad \omega^3=\alpha^5 + i\alpha^6.\end{equation}
Hence, in terms of this basis the real structure equations become $d\alpha^5 = d\alpha^6 = 0$ and
\begin{equation}\label{split_C2_real_caso2}
\left\{\begin{array}{lll}
d\alpha^1&=&(\Real A + \Real B)\,\alpha^{15}- (\Imag A - \Imag B)\,\alpha^{16} \\[2pt] &&-(\Imag A + \Imag B)\,\alpha^{25}-(\Real A - \Real B)\,\alpha^{26},\\[5pt]
d\alpha^2&=&(\Imag A + \Imag B)\,\alpha^{15}+ (\Real A - \Real B)\,\alpha^{16}\\[2pt]&&+(\Real A + \Real B)\,\alpha^{25}-(\Imag A - \Imag B)\,\alpha^{26},\\[5pt]
d\alpha^3&=&-(\Real A + \Real B)\,\alpha^{35}+ (\Imag A - \Imag B)\,\alpha^{36}\\[2pt]&&+(\Imag A - \Imag B)\,\alpha^{45}+(2+\Real A + \Real B)\,\alpha^{46},\\[5pt]
d\alpha^4&=&-(\Imag A - \Imag B)\,\alpha^{35}- (2+\Real A + \Real B)\,\alpha^{36}\\[2pt]&&-(\Real A + \Real B)\,\alpha^{45}+(\Imag A - \Imag B)\,\alpha^{46}.
\end{array}\right.
\end{equation}

We need to consider different cases in order to identify all the possible real Lie algebras underlying
these equations.
Concretely, we focus our attention at the expression $\Imag A - \Imag B$ in~\eqref{split_C2_real_caso2} distinguishing three cases, namely:
$\Imag A = \Imag B = 0$, $\Imag A = \Imag B \neq 0$, or $\Imag A \neq  \Imag B$.


\subsubsection{Case \texorpdfstring{$\varepsilon = 1$, $\Imag A=\Imag B=0$}{}}

\begin{lemma}\label{lema imA=imB=0}
The 
Lie algebras underlying equations~\eqref{split_C2} with $\varepsilon = 1$ and
$A, B\in \R$ are
$\mathfrak s_{2}$,
$\mathfrak s_{4}$, $\mathfrak s_{7}^{\alpha}$, $\mathfrak s_{9}$,
$\mathfrak s_{11}^{\alpha}$, $\mathfrak s_{12}$.
\end{lemma}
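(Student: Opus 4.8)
The plan is to read off, from the real structure equations~\eqref{split_C2_real_caso2} specialized to $\Imag A=\Imag B=0$, that $\frg$ is a semidirect product $\R^4\rtimes\R^2$, and then to classify it through the commuting pair of matrices describing the action. First I would set $s:=\Real A+\Real B$ and $d:=\Real A-\Real B$, so that~\eqref{split_C2_real_caso2} collapses to $d\alpha^5=d\alpha^6=0$ together with
$$
\left\{\begin{array}{l}
d\alpha^1 = s\,\alpha^{15} - d\,\alpha^{26},\\[2pt]
d\alpha^2 = d\,\alpha^{16} + s\,\alpha^{25},\\[2pt]
d\alpha^3 = -s\,\alpha^{35} + (2+s)\,\alpha^{46},\\[2pt]
d\alpha^4 = -(2+s)\,\alpha^{36} - s\,\alpha^{45}.
\end{array}\right.
$$
In particular $\langle\alpha^5,\alpha^6\rangle$ is closed, so the dual vectors $e_5,e_6$ span a $2$-dimensional abelian subalgebra acting on the abelian ideal $V:=\langle e_1,\dots,e_4\rangle$. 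Writing $A_5:=\mathrm{ad}(e_5)|_V$ and $A_6:=\mathrm{ad}(e_6)|_V$, a direct reading of the coefficients gives
$$
A_5=\mathrm{diag}(s,s,-s,-s),\qquad
A_6=\begin{pmatrix} 0 & -d & 0 & 0\\ d & 0 & 0 & 0\\ 0 & 0 & 0 & 2+s\\ 0 & 0 & -(2+s) & 0\end{pmatrix}.
$$
Both are semisimple and commute (consistently with Definition~\ref{def-splitting-type}\eqref{item:ass-3}), and $\frg\cong V\rtimes\langle A_5,A_6\rangle$.

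The key observation is that the isomorphism type of such a product depends only on the $2$-plane $\mathcal A:=\mathrm{span}_\R(A_5,A_6)\subset\mathfrak{gl}(4,\R)$ up to conjugation by $\mathrm{GL}(4,\R)$, since $\mathcal A$ is abelian and any basis of $\mathcal A$ yields the same Lie algebra. This has two consequences I would exploit: the generators $A_5,A_6$ may be rescaled \emph{independently} (this is just a change of basis of $\mathcal A$), and the only potential modulus is the ratio of the two rotation frequencies $|d|$ and $|2+s|$. The analysis is then driven by the eigenvalues of a generic element $xA_5+yA_6$, namely $xs\pm i\,yd$ (on $\langle e_1,e_2\rangle$) and $-xs\pm i\,y(2+s)$ (on $\langle e_3,e_4\rangle$).

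I would run the case analysis on the vanishing of $s$, of $2+s$, of $d$, and on the comparison of $|d|$ with $|2+s|$. When $s=d=0$ the eigenvalues are $0,0,\pm 2i$ and $\frg\cong\frs_2$ (indeed $\frg\cong\mathfrak e(2)\oplus\R^3$). When $s=0$ but $d\neq0$ all eigenvalues are purely imaginary, and $\frg\cong\frs_7^\alpha$ with $\alpha=\min(|d|,2)/\max(|d|,2)\in(0,1]$. When $d=0$ and $2+s=0$ (i.e. $\Real A=\Real B=-1$) one has $A_6=0$ and $A_5$ real, giving $\frg\cong\frs_4$. When exactly one rotation block degenerates — either $d=0$ with $s(2+s)\neq0$, or $2+s=0$ with $s\,d\neq0$ — a generic element has a double real eigenvalue together with a complex-conjugate pair of opposite real part, and $\frg\cong\frs_9$; here no modulus appears, because the surviving rotation generator can be rescaled independently of the diagonal one. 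Finally, when $s\neq0$, $2+s\neq0$ and $d\neq0$, the eigenvalues form a complex quadruple $\pm\lambda,\pm\bar\lambda$, the ratio $|d|:|2+s|$ survives as a genuine modulus, and $\frg\cong\frs_{12}$ if $|d|=|2+s|$ while $\frg\cong\frs_{11}^\alpha$, with $\alpha=\min(|d|,|2+s|)/\max(|d|,|2+s|)\in(0,1)$, otherwise.

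In each case I would exhibit an explicit real frame $\{e^1,\dots,e^6\}$ carrying the equations above into the stated form of $\frs_k$; these changes of basis are exactly what populate the summarizing tables (cf.\ Table~\ref{tabla3}). The routine part is checking unimodularity ($\trace A_5=\trace A_6=0$) and non-nilpotency (the action is semisimple and nonzero throughout). The main obstacle is the rigorous bookkeeping of when two planes $\mathcal A$ are $\mathrm{GL}(4,\R)$-conjugate: one must show that the frequency ratio is a complete invariant separating $\frs_7^\alpha$ and $\frs_{11}^\alpha$ for distinct $\alpha$, while it simultaneously collapses the apparent parameter dependence in the $\frs_4$ and $\frs_9$ cases to single isomorphism classes. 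I would settle this uniformly by encoding the four eigenvalues as linear functionals $\mathcal A\to\C$ and reducing their configuration modulo the $\mathrm{GL}(2,\R)$ action on $\mathcal A$ together with complex rescaling; the independent-rescaling freedom is precisely what pins down the ranges $\alpha\in(0,1]$ for $\frs_7^\alpha$ and $\alpha\in(0,1)$ for $\frs_{11}^\alpha$.
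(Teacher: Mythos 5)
Your proposal is correct, and it reaches exactly the list in the statement, but by a genuinely different route than the paper. The paper's proof is entirely computational: it specializes~\eqref{split_C2_real_caso2} to $\Imag A=\Imag B=0$, splits into the subcases $A=-B$, $A=B\neq0$, $A\neq\pm B$, and for each one exhibits an explicit change of real basis to the standard structure equations of $\frs_k$; these coframes are recorded in Table~\ref{tabla3} and are reused later in the paper (e.g.\ in the ``if'' part of Theorem~\ref{thm:main-thm} and in the Hermitian-metric analysis), so the concreteness is not wasted. You instead read the equations as $\R^4\rtimes_\rho\R^2$ with $\rho(e_5)=\mathrm{diag}(s,s,-s,-s)$ and $\rho(e_6)$ a pair of rotation blocks of frequencies $d$ and $2+s$ (your matrices are exactly right for the paper's sign conventions), and you classify via the image plane $\mathcal A=\rho(\R^2)$ up to $\mathrm{GL}(4,\R)$-conjugacy, i.e.\ via the joint spectrum of a commuting semisimple pair. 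This buys a conceptual explanation of \emph{why} the moduli are what they are --- the frequency ratio $\min(|d|,|2+s|)/\max(|d|,|2+s|)$ is the only invariant in the generic case, and the independent rescaling of the two generators is what kills the apparent parameters in the $\frs_4$ and $\frs_9$ cases and pins down the ranges $\alpha\in(0,1]$, $\alpha\in(0,1)$ --- at the price of invoking the (standard, and in your sketch correctly justified) fact that real semisimple representations of an abelian Lie algebra are determined by their complex weights, and of not producing the explicit coframes the paper needs downstream. Your case division ($s=d=0$; $s=0,d\neq0$; $d=0,2+s=0$; exactly one rotation block degenerate; all of $s$, $d$, $2+s$ nonzero) is exhaustive and matches Table~\ref{tabla3} line by line, including the identification $|d|=|2+s|\Leftrightarrow A=-1$ or $B=-1$ giving $\frs_{12}$. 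One cosmetic slip: in the generic case the eigenvalues of $xA_5+yA_6$ are $xs\pm i\,yd$ and $-xs\pm i\,y(2+s)$, which form a quadruple $\pm\lambda,\pm\bar\lambda$ only when $|d|=|2+s|$; your phrasing asserts this shape in general, but nothing in your argument actually uses it, so the conclusion stands.
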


\begin{proof}
Imposing condition  $\Imag A=\Imag B=0$ in~\eqref{split_C2_real_caso2}, the equations simplify as 
\begin{equation*}
\begin{array}{ll}
d\alpha^1=(A +B)\,\alpha^{15}-(A -B)\,\alpha^{26},\quad & d\alpha^3=-(A +B)\,\alpha^{35}+(2+A +B)\,\alpha^{46},\\[5pt]
d\alpha^2=(A - B)\,\alpha^{16}+(A +B)\,\alpha^{25},\quad &
d\alpha^4=- (2+A +B)\,\alpha^{36}-(A + B)\,\alpha^{45}.
\end{array}
\end{equation*}

Now, it suffices to consider different cases depending on the vanishing of the coefficients in the previous structure equations.
Concretely, we divide our analysis in the subcases $A=-B$, $A=B\neq 0$ and $A\neq \pm B$. The results appear in Table~\ref{tabla3}.
Notice that in the case of the Lie algebra $\frs_7^{\alpha'}$,
if $\alpha' = |A|>1$ then it is isomorphic to $\frs_7^{\alpha}$ with $\alpha=1/\alpha'$, so that $0<\alpha\leq 1$ according
to Theorem~\ref{thm:main-thm}.
Similarly, $\frs_{11}^{\alpha'}$
is isomorphic to the Lie algebra $\mathfrak s_{11}^{\alpha}$ for some $\alpha \in (0,1)$,
as it appears in Theorem~\ref{thm:main-thm}. 

 For each case in Table~\ref{tabla3}, we need to apply a change of real basis between the initial one $\{\alpha^1,\ldots, \alpha^6\}$ and the final one $\{e^1,\ldots, e^6\}$.  These changes are given simply by equalling the expression of $\omega^i$'s given in \eqref{base-alfa} and their corresponding expressions given in Table~\ref{tabla3}.

\end{proof}

\begin{center}
\renewcommand{\arraystretch}{1.3}
\begin{table}[h]\
\begin{tabular}{|c|c|l|c|}
\hline
\multicolumn{2}{|c|}{$A, B\in \R$}& Real basis $\{e^1,\ldots,e^6\}$ & Lie algebra\\
\hline
\multirow{5}{*}{$A=-B$}&\multirow{2}{*}{$A=0$}&$\omega^1=e^4 + ie^5,\,\, \omega^2=e^3+ie^2,$&\multirow{2}{*}{$\frs_2$}\\
&&$\omega^3=-e^6-\frac i2e^1$&\\[3pt]
\cline{2-4}

&\multirow{2}{*}{$A\neq 0$}&$\omega^1=-\frac{A}{|A|}\,e^3 + ie^4,$&$\frs_7^{\alpha'}$\\
&&$\omega^2=e^1+ie^2,\,\,\omega^3=e^6+\frac i2 e^5$&$\alpha' = |A|$\\[3pt]
\cline{1-4}

\multirow{4}{*}{$A=B$}&\multirow{2}{*}{$A=-1$}&$\omega^1=e^1 + ie^4,\,\, \omega^2=e^3+ie^2,$&\multirow{2}{*}{$\frs_4$}\\
&&$\omega^3=-\frac12 e^5+ i e^6$&\\[3pt]
\cline{2-4}

&\multirow{2}{*}{$A\neq 0, -1$}&$\omega^1=e^1 + ie^2,\,\, \omega^2=e^3+ie^4,$&\multirow{2}{*}{$\frs_9$}\\
&&$\omega^3=-\frac{1}{2A}\,e^6-\frac{i}{2(A+1)}\,e^5$&\\[3pt]
\cline{1-4}

\multirow{9}{*}{$A\neq \pm B$}&\multirow{2}{*}{$A=-1$}&$\omega^1=e^1 + ie^2,\, \omega^2=e^4+ie^3,$&\multirow{2}{*}{$\frs_{12}$}\\
&&$\omega^3=\frac{1}{B-1}\,e^6-\frac{i}{B+1}\,e^5$&\\[3pt]
\cline{2-4}

&\multirow{2}{*}{$B=-1$}&$\omega^1=e^1 + ie^2,\, \omega^2=e^3+ie^4,$&\multirow{2}{*}{$\frs_{12}$}\\
&&$\omega^3=\frac{1}{A-1}\,e^6+\frac{i}{A+1}\,e^5$&\\[3pt]
\cline{2-4}

&\multirow{2}{*}{$A+B=-2$}&$\omega^1=e^3+ie^4,\,\, \omega^2=e^1 + ie^2,$&\multirow{2}{*}{$\frs_9$}\\
&&$\omega^3=-\frac{1}{2}\,e^6+\frac{i}{2(A+1)}\,e^5$&\\[3pt]
\cline{2-4}

&$A+B\neq -2$&$\omega^1=e^1 + ie^2,\, \omega^2=e^3+ie^4,$&$\frs_{11}^{\alpha'}$\\
&$A, B\neq -1$&$\omega^3=\frac{1}{A+B}\,e^6+\frac{i}{A-B}\,e^5$&$\alpha' = \frac{2+A+B}{B-A}$\\[3pt]
\cline{1-4}

\end{tabular}
\medskip
\caption{Lie algebras underlying equations~\eqref{split_C2} with $\varepsilon=1$ and $\Imag A=\Imag B=0$
(Lemma~\ref{lema imA=imB=0}).}  \label{tabla3}
\end{table}  

\end{center}

\subsubsection{Case \texorpdfstring{$\varepsilon = 1$, $\Imag A=\Imag B\neq 0$}{}}

\begin{lemma}\label{lema imA=imB no0}
The 
Lie algebras underlying equations~\eqref{split_C2} with $\varepsilon = 1$ and $\Imag A=\Imag B\neq 0$ are
$\mathfrak s_{3}$, $\mathfrak s_{5}^{\alpha}$, $\mathfrak s_{9}$, $\mathfrak s_{10}^{\alpha, \beta}$.
\end{lemma}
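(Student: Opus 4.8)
The plan is to substitute the hypothesis $\Imag A=\Imag B=:\mu\neq0$ into the real structure equations~\eqref{split_C2_real_caso2} and then to identify the resulting Lie algebra in each case by an explicit real change of basis, collecting the outcome in Table~\ref{tabla_imA=imBneq0}. Writing $s:=\Real A+\Real B$ and $t:=\Real A-\Real B$, and using $\Imag A+\Imag B=2\mu$ together with $\Imag A-\Imag B=0$, the equations~\eqref{split_C2_real_caso2} collapse to $d\alpha^5=d\alpha^6=0$ together with
\begin{equation*}
\left\{\begin{array}{ll}
d\alpha^1=s\,\alpha^{15}-2\mu\,\alpha^{25}-t\,\alpha^{26}, & d\alpha^3=-s\,\alpha^{35}+(2+s)\,\alpha^{46},\\[3pt]
d\alpha^2=2\mu\,\alpha^{15}+t\,\alpha^{16}+s\,\alpha^{25}, & d\alpha^4=-(2+s)\,\alpha^{36}-s\,\alpha^{45}.
\end{array}\right.
\end{equation*}

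Next I would record the qualitative structure these equations encode: the dual frame $\{e_1,\dots,e_6\}$ satisfies that $\langle e_1,\dots,e_4\rangle$ is an abelian ideal, that $e_5$ and $e_6$ commute, and that $\operatorname{ad}_{e_5},\operatorname{ad}_{e_6}$ act as commuting semisimple derivations preserving each of the two $J$-invariant planes $\langle e_1,e_2\rangle$ and $\langle e_3,e_4\rangle$. Regarding these planes as complex lines (via $e_1\mapsto e_2$ and $e_3\mapsto e_4$), the action is by the complex scalars $s+2\mu i$ and $-s$ for $e_5$, and $t i$ and $-(2+s)i$ for $e_6$. Since the isomorphism type of such an almost-abelian-type extension depends only on these two weight data, up to a real change of basis in $\langle e_5,e_6\rangle$, a rescaling of each plane, and a possible swap or conjugation of the two planes, the identification reduces to matching weights.

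The heart of the proof is then a case analysis organized by the vanishing of $s$ and of $2+s$. If $s=0$ both derivations are rotations; replacing $e_6$ by $e_6-\tfrac{t}{2\mu}e_5$ separates the action into one rotation on each plane, yielding $\mathfrak{s}_3$. If $s=-2$ and $t=0$ then $e_6$ is central while $e_5$ acts as a real scalar on one plane and with complex eigenvalues on the other, yielding $\mathfrak{s}_5^{\alpha}$ with $\alpha$ determined by $\mu$. If $s=-2$ and $t\neq0$ then $e_6$ annihilates $\langle e_3,e_4\rangle$ while $e_5-\tfrac{2\mu}{t}e_6$ acts as a real scalar of opposite sign on the two planes, yielding $\mathfrak{s}_9$. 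Finally, if $s\notin\{0,-2\}$, an appropriate $\mathrm{GL}(2,\R)$-combination of $e_5,e_6$ makes the action real on one plane and of rotation type on the other, yielding $\mathfrak{s}_{10}^{\alpha,\beta}$ with $\alpha,\beta$ explicit functions of $s,t,\mu$ (for instance $\beta=\tfrac{s}{2\mu}$ when $t\neq0$, and $\beta=0$ when $t=0$). In each case the real basis $\{e^1,\dots,e^6\}$ is obtained by equating~\eqref{base-alfa} with the coframe normalizations dictated by these reductions, and the reduction of $\alpha,\beta$ to the ranges stated in Theorem~\ref{thm:main-thm} is deferred to Appendix~\ref{apendice}.

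I expect the main obstacle to be the last case $s\notin\{0,-2\}$: here neither $e_5$ nor $e_6$ alone acts diagonalizably over $\R$ on both planes, so one must pin down the correct real combination of the generators that simultaneously realifies the action on one plane and keeps it of rotation type on the other, and then extract $\alpha$ and $\beta$ — while keeping track of the fact that the construction degenerates differently according to whether $t=0$ or $t\neq0$, and reconciling the resulting values with the normalized parameter ranges.
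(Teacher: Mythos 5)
Your proposal is correct and follows essentially the same route as the paper: substitute $\Imag A=\Imag B\neq 0$ into~\eqref{split_C2_real_caso2}, split into cases by the vanishing of $\Real A+\Real B$, $\Real A+\Real B+2$ and $\Real A-\Real B$ (the same partition the paper phrases as $\Real A=\pm\Real B$ or not), and identify each algebra by an explicit real change of basis as in Table~\ref{tabla_imA=imBneq0}, with the parameter values ($\alpha=|\Imag A|$ for $\frs_5$, $\beta=\tfrac{\Real A+\Real B}{2\,\Imag A}$ or $0$ for $\frs_{10}$) matching the paper's. Your reformulation in terms of commuting semisimple derivations acting by complex weights on the two $J$-invariant planes is a more conceptual packaging of the same identification, not a different argument.
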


\begin{proof}
Taking  $\Imag A=\Imag B\neq 0$, the equations~\eqref{split_C2_real_caso2} transform into 
$$
\left\{\begin{array}{lll}
d\alpha^1&=&(\Real A + \Real B)\,\alpha^{15}-2\,\Imag A\,\alpha^{25}-(\Real A - \Real B)\,\alpha^{26},\\[5pt]
d\alpha^2&=&2\,\Imag A\,\alpha^{15}+ (\Real A - \Real B)\,\alpha^{16}+(\Real A + \Real B)\,\alpha^{25},\\[5pt]
d\alpha^3&=&-(\Real A + \Real B)\,\alpha^{35}+(2+\Real A + \Real B)\,\alpha^{46},\\[5pt]
d\alpha^4&=&- (2+\Real A + \Real B)\,\alpha^{36}-(\Real A + \Real B)\,\alpha^{45}.
\end{array}\right.
$$
We consider the following cases according to the vanishing of some coefficients in the equations above, namely
$\Real A = -\Real B$, $\Real A = \Real B\neq 0$ and $\Real A\neq \pm \Real B$,
obtaining the results that appear in Table~\ref{tabla_imA=imBneq0}.   The changes of basis between $\{\alpha^i\}_{i=1}^6$ and $\{e^i\}_{i=1}^6$ follow directly from Table~\ref{tabla_imA=imBneq0}, taking into account  \eqref{base-alfa}.
\end{proof}

\begin{center}
\renewcommand{\arraystretch}{1.2}
\begin{table}[!ht]
{\resizebox{\textwidth}{!}{
\begin{tabular}{|c|l|c|}
\hline
$\Imag A=\Imag B\neq 0$& Real basis $\{e^1,\ldots,e^6\}$& Lie algebra\\[4pt]
\hline

\multirow{3}{*}{$\Real A=-\Real B$}&$\omega^1=e^2-ie^3,\quad \omega^2=e^5+ie^6,$&\multirow{3}{*}{$\mathfrak s_3$}\\[4pt]
&$\omega^3=\frac{1}{2\,\Imag A}(e^1-\Real A\,e^4) + \frac{i}{2} e^4$&\\[4pt]
\hline

\multirow{3}{*}{$\Real A=\Real B=-1$}&$\omega^1=-\frac{\Imag A}{|\Imag A|}e^3+ie^4,$&$\mathfrak s_5^{\alpha}$\\[4pt]
&$\omega^2=e^1+ie^2,\quad \omega^3=\frac 12\,e^5 + ie^6$&$\alpha= |\Imag A|$\\
\hline

\multirow{3}{*}{$\Real A=\Real B\neq 0, -1$}&$\omega^1=e^3+ie^4,\quad \omega^2=e^1+ie^2,$&$\mathfrak s_{10}^{\alpha, 0}$\\[5pt]
&$\omega^3=-\frac{1}{2\,\Real A}\,e^5-\frac {i}{2(\Real A+1)}\,e^6$&$\alpha=-\frac{\Imag A}{\Real A}$\\[5pt]
\hline
$\Real A\neq \pm \Real B$&$\omega^1=e^3+ie^4,\quad \omega^2=e^1+ie^2,$&\multirow{3}{*}{$\mathfrak s_{9}$}\\[5pt]
$\Real A+\Real B=-2$&$\omega^3=-\frac{1}{2}\,e^6+\frac {i}{2(\Real A+1)}\,(e^5+\Imag A\, e^6)$&\\[5pt]
\hline

\multirow{3}{*}{$\Real A\neq \pm \Real B$}&$\omega^1=e^1+ie^2,$&$\mathfrak s_{10}^{\alpha,\beta}$\\[5pt]
\multirow{3}{*}{$\Real A + \Real B\neq-2$}&$\omega^2=e^3+ie^4,$&$\alpha=\frac{2\,\Imag A(2+\Real A+\Real B)}{\Real^2 A - \Real^2 B}$\\[5pt]
&$\omega^3=\frac{1}{\Real A + \Real B}e^5+\frac{1}{2\,\Imag A}e^6 -i\,\frac{2\,\Imag A}{\Real^2 A - \Real^2 B}\,e^5$& $\beta=\frac{\Real A + \Real B}{2\,\Imag A}$\\[4pt]
\hline
\end{tabular}}}
\medskip
\caption{Lie algebras underlying equations~\eqref{split_C2} with $\varepsilon=1$ and $\Imag A=\Imag B\neq 0$
(Lemma~\ref{lema imA=imB no0}).}\label{tabla_imA=imBneq0}
\end{table}
\end{center}

\subsubsection{Case \texorpdfstring{$\varepsilon = 1$, $\Imag A\neq \Imag B$}{}}

Starting from \eqref{split_C2_real_caso2}, let us consider the new basis $\{\beta^1,\ldots,\beta^6\}$ given by
$$\beta^i = \alpha^i,\, i=1,2,3,4,\quad  \beta^5 = (\Imag A-\Imag B)\,\alpha^5,\quad \beta^6 = (\Imag A-\Imag B)\,\alpha^6.$$
In terms of this basis, the structure equations \eqref{split_C2_real_caso2} are 
\begin{equation}\label{split_C2_real}
\left\{\begin{array}{lll}
d\beta^1&=&-\beta^1\wedge\left(\beta^6-\frac{\Real A + \Real B}{\Imag A-\Imag B}\,\beta^{5}\right) -
\beta^2\wedge\left(\frac{\Imag A + \Imag B}{\Imag A - \Imag B }\,\beta^{5}+\frac{\Real A - \Real B}{\Imag A - \Imag B }\,\beta^{6}\right),\\[5pt]
d\beta^2&=&-\beta^2\wedge\left(\beta^6-\frac{\Real A + \Real B}{\Imag A-\Imag B}\,\beta^{5}\right) +
\beta^1\wedge\left(\frac{\Imag A + \Imag B}{\Imag A - \Imag B }\,\beta^{5}+\frac{\Real A - \Real B}{\Imag A - \Imag B }\,\beta^{6}\right),\\[5pt]
d\beta^3&=&\beta^3\wedge\left(\beta^6-\frac{\Real A + \Real B}{\Imag A-\Imag B}\,\beta^{5}\right)
+\beta^{4}\wedge\left(\beta^{5}+\frac{2+\Real A + \Real B}{\Imag A-\Imag B}\,\beta^{6}\right),\\[5pt]
d\beta^4&=&-\beta^{3}\wedge\left(\beta^{5}+\frac{2+\Real A + \Real B}{\Imag A-\Imag B}\,\beta^{6}\right)
+\beta^4\wedge\left(\beta^6-\frac{\Real A + \Real B}{\Imag A-\Imag B}\,\beta^{5}\right).
\end{array}\right.
\end{equation}

We define the 1-forms
$$
\nu^5 = \beta^{5}+\frac{2+\Real A + \Real B}{\Imag A-\Imag B}\,\beta^{6},\quad  \nu^6 = \beta^6-\frac{\Real A + \Real B}{\Imag A-\Imag B}\,\beta^{5}.
$$
The linear dependence of $\nu^5$ and $\nu^6$ will play a key role in our study of the underlying Lie algebras.
Let us define
$$
\begin{array}{lll}
\Delta=\Delta(A,B)& =& (\Imag A-\Imag B)^2 + (2+\Real A + \Real B)(\Real A + \Real B)\\[4pt]
\nonumber&=&|A|^2+|B|^2 + 2(\Real A + \Real B + \Real A\Real B - \Imag A\Imag B).
\end{array}
$$
It is straightforward to check that $\nu^5$ and $\nu^6$ are linearly independent if and only if $\Delta\not=0$.
In the following lemmata we study the cases $\Delta=0$ and $\Delta\not=0$.

\begin{lemma}\label{caso_dep}
The Lie algebras underlying equations~\eqref{split_C2} with $\varepsilon = 1$, $\Imag A\neq \Imag B$ and $\Delta(A,B)=0$ are
$\mathfrak s_{5}^{\alpha},\,\mathfrak s_{6}^{\alpha,\beta},\, \mathfrak s_{8}^{\alpha},\,\mathfrak s_{10}^{\alpha,0}$.
\end{lemma}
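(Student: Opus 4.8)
The plan is to turn the hypothesis $\Delta(A,B)=0$, which (as noted just before the lemma) says exactly that $\nu^5$ and $\nu^6$ are linearly dependent, into an explicit reduction of the structure equations~\eqref{split_C2_real}. First I would observe that neither form can vanish, since the coefficient of $\beta^5$ in $\nu^5$ and the coefficient of $\beta^6$ in $\nu^6$ both equal $1$. Hence $\Delta(A,B)=0$ is equivalent to $\nu^5=b\,\nu^6$ with $b=\frac{2+\Real A+\Real B}{\Imag A-\Imag B}$, and moreover $b\neq 0$, because $2+\Real A+\Real B=0$ would force $\Delta(A,B)=(\Imag A-\Imag B)^2>0$. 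Substituting $\nu^5=b\,\nu^6$ into~\eqref{split_C2_real} and setting $\rho:=\nu^6$ and $\eta:=\frac{\Imag A+\Imag B}{\Imag A-\Imag B}\,\beta^5+\frac{\Real A-\Real B}{\Imag A-\Imag B}\,\beta^6$, which are both closed $1$-forms, the equations collapse to
\begin{equation*}
d\beta^1=-\beta^1\wedge\rho-\beta^2\wedge\eta,\quad d\beta^2=\beta^1\wedge\eta-\beta^2\wedge\rho,\quad d\beta^3=(\beta^3+b\,\beta^4)\wedge\rho,\quad d\beta^4=(\beta^4-b\,\beta^3)\wedge\rho.
\end{equation*}

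Next I would note that everything is governed by whether $\rho$ and $\eta$ are linearly independent. A direct computation shows that the relevant $2\times 2$ determinant equals $\frac{|A|^2-|B|^2}{(\Imag A-\Imag B)^2}$, so $\rho$ and $\eta$ are proportional precisely when $|A|=|B|$. This splits the analysis into two regimes. If $|A|\neq|B|$, then $\{\rho,\eta\}$ is a basis of the plane spanned by $\beta^5,\beta^6$ and both directions act nontrivially, so the underlying Lie algebra is a genuine $\R^2\ltimes\R^4$; since $\eta$ already enters $d\beta^1,d\beta^2$ with coefficients $\pm 1$, an explicit real change of basis (taking $e^5,e^6$ proportional to $\rho,\eta$, flipping the sign of $\rho$ to absorb $b$, and keeping $\beta^1,\dots,\beta^4$ up to sign) brings the equations to the form of $\mathfrak s_{10}^{\alpha,0}$.

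If instead $|A|=|B|$, then $\eta=\lambda\,\rho$ with $\lambda=\frac{\Real A-\Real B}{\Imag A-\Imag B}$, so only the single closed form $\rho$ acts and the algebra is almost abelian, $\R\ltimes\R^5$, the complementary closed form in $\langle\beta^5,\beta^6\rangle$ being central. The acting endomorphism on $\langle\beta^1,\beta^2,\beta^3,\beta^4\rangle$ then has the two conjugate pairs of eigenvalues $-1\pm i\,\lambda$ and $1\pm i\,b$. I would identify the algebra from these eigenvalues, using that isomorphism allows conjugation together with a real rescaling of $\rho$ (hence a simultaneous real scaling of all eigenvalues): when $\lambda=0$, i.e. $\Real A=\Real B$, one block degenerates to a real scalar and one obtains $\mathfrak s_5^{\alpha}$; when $\lambda\neq 0$ and $|\lambda|\neq|b|$ one gets $\mathfrak s_6^{\alpha,\beta}$ with $\beta=\min(|\lambda|,|b|)/\max(|\lambda|,|b|)\in(0,1)$; and the borderline $|\lambda|=|b|$ yields $\mathfrak s_8^{\alpha}$. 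In each subcase I would record the explicit real basis $\{e^1,\dots,e^6\}$ in Table~\ref{tabla5}, and note that unimodularity is automatic because the acting endomorphism has trace $-1-1+1+1=0$.

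The step I expect to be the main obstacle is the normalization bookkeeping in the almost abelian regime: matching the scaling- and sign-invariant data of the two eigenvalue pairs to the precise parameter ranges of $\mathfrak s_5^{\alpha}$, $\mathfrak s_6^{\alpha,\beta}$ and $\mathfrak s_8^{\alpha}$ in Theorem~\ref{thm:main-thm} (in particular singling out $\beta=1$ as $\mathfrak s_8$ and confirming $0<\beta<1$ otherwise), and verifying that the available sign flips and rescalings do not generate spurious extra isomorphism types. These normalizations, together with the rescalings of the parameters already used in the earlier lemmas, are deferred to Appendix~\ref{apendice}.
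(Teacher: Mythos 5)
Your proposal is correct and follows essentially the same route as the paper: the substitution $\nu^5=b\,\nu^6$ (your $b$ is the paper's $\theta$) yields the reduced equations~\eqref{dif_gamma}, then the dichotomy $|A|=|B|$ versus $|A|\neq|B|$ (your determinant is precisely the coefficient $\frac{|B|^2-|A|^2}{(\Imag A-\Imag B)^2}$ appearing there), and finally the identifications with $\frs_{10}^{\alpha,0}$ in the independent case and with $\frs_5^{\alpha}$, $\frs_6^{\alpha,\beta}$, $\frs_8^{\alpha}$ in the almost abelian case. The only difference is presentational: where the paper's Table~\ref{tabla5} subdivides the regime $|A|=|B|$ by explicit conditions on $(A,B)$ (namely $B=\bar A$, $B=-1$, $A=-1$, or the generic case) together with explicit real bases, you organize the same subcases by the scaling-invariant eigenvalue data $-1\pm i\lambda$, $1\pm ib$ of the single derivation, which repackages exactly the normalizations carried out in Appendix~\ref{apendice}.
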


\begin{proof}
Notice first that the condition $\Delta=0$ implies that $\Real A + \Real B\neq 0, -2$.
Since $\nu^5$ and $\nu^6$ are linearly dependent, we have that
$$\nu^5 = \theta\,\nu^6,\quad \text{where }\quad
\theta=\frac{2+\Real A + \Real B}{\Imag A-\Imag B} = -\frac{\Imag A-\Imag B}{\Real A + \Real B}\neq 0.$$
Let us consider the new basis $\{\gamma^1,\ldots,\gamma^6\}$ given by
$\gamma^i = \beta^i$, $1\leq i\leq 5$, and $\gamma^6 =\nu^6= \beta^6+\frac1\theta\,\beta^5$.
With respect to this basis, the structure equations \eqref{split_C2_real} are 
\begin{equation}\label{dif_gamma}
\left\{\begin{array}{lll}
d\gamma^1&=&-\gamma^{16} + \gamma^2\wedge
\left[\left(\frac{|B|^2-|A|^2}{(\Imag A - \Imag B)^2}\right)\gamma^{5}
-\left(\frac{\Real A - \Real B}{\Imag A-\Imag B}\right)\gamma^{6}\right],\\[5pt]
d\gamma^2&=&-\gamma^{26} - \gamma^1\wedge
\left[\left(\frac{|B|^2-|A|^2}{(\Imag A - \Imag B)^2}\right)\gamma^{5}
-\left(\frac{\Real A - \Real B}{\Imag A-\Imag B}\right)\gamma^{6}\right],\\[5pt]
d\gamma^3&=&\gamma^{36}+\theta\,\gamma^{46},\\[5pt]
d\gamma^4&=&\gamma^{46}-\theta\,\gamma^{36}.
\end{array}\right.
\end{equation}

In order to determine the Lie algebras underlying the equations~\eqref{dif_gamma},
we distinguish the cases when $|A| = |B|$ or $|A|\neq |B|$ (see Table~\ref{tabla5} for details).
Notice that the Lie algebras $\mathfrak s_5^{\alpha'}$, $\mathfrak s_6^{\alpha',\beta'}$
and $\mathfrak s_8^{\alpha'}$ in Table~\ref{tabla5} are isomorphic to the Lie algebras $\mathfrak s_5^{\alpha}$, $\mathfrak s_6^{\alpha,\beta}$
and $\mathfrak s_8^{\alpha}$ with the values of the parameters $\alpha$ and $\beta$ that appear in Theorem~\ref{thm:main-thm}.

 Observe that the relation between the bases $\{\gamma^i\}_{i=1}^6$ and $\{e^i\}_{i=1}^6$ can be deduced from the following diagram:
$$
\xymatrix{
\alpha
\ar@/^1.5pc/[rrr]^{\txt{Table 4}}
\ar[r]
& \beta \ar[r]
& \gamma \ar[r]
& e.
}
$$ \vspace{-0.3cm}
\end{proof}

\begin{center}
\renewcommand{\arraystretch}{1.3}
\begin{table}[!ht]
{\resizebox{\textwidth}{!}{
\begin{tabular}{|c|c|c|l|c|}
\hline
\multicolumn{3}{|c|}{$\Imag A\neq \Imag B$,\ \ $\Delta(A,B)=0$}& Real basis $\{e^1,\ldots,e^6\}$& Lie algebra\\[4pt]
\hline

\multirow{11}{*}{$|A|=|B|$}&\multicolumn{2}{c|}{\multirow{3}{*}{$B=\bar A$}}&$\omega^1=e^1+ie^2,\quad \omega^2 = e^3 + i e^4,$& $\mathfrak s_5^{\alpha'}$\\[5pt]
&\multicolumn{2}{c|}{}&$\omega^3=\frac{1}{2\,\Imag A}\left[e^6 - i\left(e^5+\frac{\Imag A}{1+\Real A}\,e^6\right)\right]$&$\alpha' = -\frac{1+\Real A}{\Imag A}$\\[4pt] \cline{2-5}

&\multirow{8}{*}{$B\neq \bar A$}&$B=-1$&$\omega^1=e^1+ie^2,\quad \omega^2 = e^4 + i e^3,$& $\mathfrak s_8^{\alpha'}$\\[5pt]
&&$\Imag A\neq 0$&$\omega^3=\frac{1}{\Imag A}e^6- \frac{i}{1+\Real A}(e^5+e^6)$&$\alpha' = \frac{\Imag  A}{1+\Real A}$\\[4pt]
 \cline{3-5}

&&$A=-1$&$\omega^1=e^1+ie^2,\quad \omega^2 = e^3 + i e^4,$& $\mathfrak s_8^{\alpha'}$\\[5pt]
&&$\Imag B\neq 0$&$\omega^3=\frac{-1}{\Imag B}e^6+ \frac{i}{1+\Real B}(e^5-e^6)$&$\alpha' = \frac{\Imag  B}{1+\Real B}$\\[4pt] \cline{3-5}

&&$\Real A\neq \Real B$&$\omega^1=e^1+ie^2,\quad \omega^2 = e^3 + i e^4,$& $\mathfrak s_6^{\alpha',\beta'}$\\[5pt]
&&$\Real A, \Real B\neq -1$&$\omega^3=\frac{1}{\Imag A-\Imag B}e^6 - i\frac{1}{\Real A - \Real B}e^5$&$\alpha'=\frac{\Imag A-\Imag B}{\Real A - \Real B}$\\[5pt]
&&$(\Imag A)(\Imag B)\neq 0$&$ \qquad + i\frac{\Real A + \Real B}{(\Imag A-\Imag B)^2}\,e^6$&$\beta'=\frac{-(2+\Real A + \Real B)}{\Real A-\Real B}$\\[4pt]
\cline{1-5}

\multicolumn{3}{|c|}{\multirow{4}{*}{$|A|\neq |B|$}} &$\omega^1=e^1+ie^2,\quad \omega^2=e^3+ie^4,$& \multirow{2}{*}{$\mathfrak s_{10}^{\alpha,0}$}\\[5pt]
\multicolumn{3}{|c|}{}&$\omega^3=\frac{\Real A-\Real B}{|A|^2-|B|^2}\,e^5+\frac{\Imag A-\Imag B}{|A|^2-|B|^2}\,e^6$&\\[5pt]
\multicolumn{3}{|c|}{}&$\phantom{\omega^3=}-i\frac{\Imag A+\Imag B}{|A|^2-|B|^2}\,e^5+i\frac{\Real A + \Real B}{|A|^2-|B|^2}\,e^6$&
$\alpha=\frac{2+\Real A + \Real B}{\Imag A-\Imag B}$\\[4pt]
\cline{1-5}

\end{tabular}}}

\medskip
\caption{Lie algebras underlying equations~\eqref{split_C2} with $\varepsilon=1$,
$\Imag A\neq \Imag B$ and $\Delta(A,B)=0$ (Lemma~\ref{caso_dep}).}\label{tabla5}
\end{table}  
\end{center}

\begin{lemma}\label{caso_indep}
The Lie algebras underlying equations~\eqref{split_C2} with $\varepsilon = 1$, $\Imag A\neq \Imag B$ and $\Delta(A,B)\not=0$ are
$\mathfrak s_{9},\, \mathfrak s_{10}^{\alpha, \beta},\, \frs_{11}^{\alpha},\, \frs_{12}$.
\end{lemma}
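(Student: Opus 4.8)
The hypothesis $\Delta(A,B)\neq0$ means exactly that the two \emph{closed} $1$-forms $\nu^5,\nu^6$ are linearly independent, so that $\{\beta^1,\beta^2,\beta^3,\beta^4,\nu^5,\nu^6\}$ is again a real coframe. The plan is to rewrite the equations \eqref{split_C2_real} in this coframe. Since, by the very definition of $\nu^5,\nu^6$, the equations for $\beta^3,\beta^4$ are already $d\beta^3=\beta^3\wedge\nu^6+\beta^4\wedge\nu^5$ and $d\beta^4=-\beta^3\wedge\nu^5+\beta^4\wedge\nu^6$, the only work is to express the remaining $1$-form $\frac{\Imag A+\Imag B}{\Imag A-\Imag B}\,\beta^5+\frac{\Real A-\Real B}{\Imag A-\Imag B}\,\beta^6$ occurring in $d\beta^1,d\beta^2$ in terms of $\nu^5,\nu^6$. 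Inverting the pair $(\nu^5,\nu^6)$, whose Jacobian with respect to $(\beta^5,\beta^6)$ equals $\Delta/(\Imag A-\Imag B)^2\neq0$, I would write it as $p\,\nu^5+q\,\nu^6$ with $p=\frac{|A|^2-|B|^2}{\Delta}$ and a suitable real $q=q(A,B)$. Setting $\gamma^i=\beta^i$ $(1\le i\le4)$, $\gamma^5=\nu^5$, $\gamma^6=\nu^6$, this reduces the structure equations to the two-parameter normal form
\[
\begin{gathered}
d\gamma^1=-\gamma^{16}-p\,\gamma^{25}-q\,\gamma^{26},\qquad d\gamma^2=p\,\gamma^{15}+q\,\gamma^{16}-\gamma^{26},\\
d\gamma^3=\gamma^{36}+\gamma^{45},\qquad d\gamma^4=-\gamma^{35}+\gamma^{46},\qquad d\gamma^5=d\gamma^6=0.
\end{gathered}
\]

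Next I would read off the Lie algebra from this normal form. Here $\langle\gamma^5,\gamma^6\rangle$ is abelian and acts on each of the two $2$-planes $\langle\gamma^1,\gamma^2\rangle$ and $\langle\gamma^3,\gamma^4\rangle$ by a rotation-dilation, so the isomorphism type is governed by the two $\C$-valued weights carried by the planes, up to a real change of basis of $\langle\gamma^5,\gamma^6\rangle$, complex conjugation in either plane, and interchange of the two planes. The second plane always carries a rank-two (faithful) weight, whereas the weight on the first plane, namely $ip\,\gamma^5+(-1+iq)\,\gamma^6$, degenerates to rank one exactly when $p=0$, i.e. when $|A|=|B|$. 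Accordingly I would split into the cases $|A|=|B|$ and $|A|\neq|B|$ (as in Lemma~\ref{caso_dep}), and inside each according to whether $q=0$, i.e. whether the $\gamma^6$-weight is real. I expect: $|A|=|B|$ with $q=0$ to give $\mathfrak s_{9}$ and with $q\neq0$ to give $\mathfrak s_{10}^{\alpha,0}$; and $|A|\neq|B|$ with $q=0$ to give $\mathfrak s_{12}$ when $|p|=1$ and $\mathfrak s_{11}^{\alpha}$ when $|p|\neq0,1$, while $q\neq0$ gives $\mathfrak s_{10}^{\alpha,\beta}$ with $\beta\neq0$.

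For each of these subcases I would exhibit an explicit real change of basis $\{\gamma^i\}\to\{e^i\}$ carrying the above equations into the corresponding real structure equations of Theorem~\ref{thm:main-thm}, and record them in a table in the spirit of Table~\ref{tabla5} (composing with the earlier changes $\alpha\to\beta\to\gamma$). As before, the parameters so produced need not lie in the normalized ranges of Theorem~\ref{thm:main-thm}, so I would invoke the isomorphisms $\mathfrak s_{11}^{\alpha'}\cong\mathfrak s_{11}^{\alpha}$ for a unique $\alpha\in(0,1)$, together with the analogous reductions for $\mathfrak s_{10}^{\alpha,\beta}$ (see Appendix~\ref{apendice}), to land in the stated families and to confirm that nothing outside the list $\mathfrak s_{9},\mathfrak s_{10}^{\alpha,\beta},\mathfrak s_{11}^{\alpha},\mathfrak s_{12}$ occurs.

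The routine part is the linear algebra producing $p,q$ and checking each tabulated change of basis. The main obstacle is the case analysis itself: isolating the invariants ($|A|$ versus $|B|$, the reality of the $\gamma^6$-weight, and the value $|p|=1$) so that precisely these four families appear, verifying that superficially different sub-cases collapse under interchange of the planes and conjugation (for instance that $|A|=|B|$ with $q\neq0$ really yields $\mathfrak s_{10}^{\alpha,0}$ after swapping the two planes, and that $p$ and $1/p$ give the same $\mathfrak s_{11}^{\alpha}$), and matching the free parameters to the prescribed ranges $\alpha\neq0,\ \beta\in\R$ for $\mathfrak s_{10}$ and $\alpha\in(0,1)$ for $\mathfrak s_{11}$ through the isomorphisms of Appendix~\ref{apendice}.
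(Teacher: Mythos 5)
Your proposal is correct and follows essentially the same route as the paper: your normal form in the coframe $\{\beta^1,\ldots,\beta^4,\nu^5,\nu^6\}$ is exactly the paper's set of equations for $d\nu^1,\ldots,d\nu^4$, your coefficients $p,q$ are the paper's $X,Y$ rescaled by $\pm(\Imag A-\Imag B)/\Delta$, and your case split ($p=0$, i.e. $|A|=|B|$; $q=0$; $|p|=1$, i.e. $\Delta=\pm(|A|^2-|B|^2)$) reproduces precisely the subdivision of Table~\ref{tabla6}, including the plane swap needed for $\frs_{10}^{\alpha,0}$ and the appeal to Appendix~\ref{apendice} to normalize parameters. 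The weight-theoretic framing is a nice way to motivate which invariants matter, but the underlying argument — normal form plus explicit tabulated changes of basis — is the same as the paper's.
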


\begin{proof}
Since $\Delta\not=0$, the 1-forms $\nu^5$ and $\nu^6$ are linearly independent. Hence, we consider the basis
$\{\nu^1,\ldots,\nu^6\}$ given by
$$\nu^i = \beta^i,\, i=1,2,3,4,\quad  \nu^5= \beta^{5}+\frac{2+\Real A + \Real B}{\Imag A-\Imag B}\,\beta^{6},\quad \nu^6 = \beta^6-\frac{\Real A + \Real B}{\Imag A-\Imag B}\,\beta^{5}.$$
The structure equations~\eqref{split_C2_real} transform into 
$$
\left\{\begin{array}{lll}
d\nu^1&=&-\nu^{16}-\frac{\Imag A-\Imag B}{\Delta}\left(X\,\nu^{25} - Y\,\nu^{26}\right),\\[5pt]
d\nu^2&=&-\nu^{26}+\frac{\Imag A-\Imag B}{\Delta}\left(X\,\nu^{15} - Y\,\nu^{16}\right),\\[5pt]
d\nu^3&=&\nu^{36}+\nu^{45},\\[5pt]
d\nu^4&=&-\nu^{35}+\nu^{46},
\end{array}\right.
$$
where $$X=\frac{|A|^2 - |B|^2}{\Imag A - \Imag B},\quad\quad Y=2\,\frac{\Imag A (1+\Real B) + \Imag B (1+\Real A)}{\Imag A - \Imag B}.$$

Now, the study is divided according to the vanishing of coefficients $X$ and $Y$ (see Table~\ref{tabla6} for details).  For the sake of clarity, we see what happens when $X=Y=0$:
let us define $p=\frac{\Imag A + \Imag B}{\Imag A - \Imag B}$ and $q=\frac{\Real A - \Real B}{\Imag A - \Imag B}$, and   consider the following system of equations in variables $p$ and $q$: $$\begin{cases}
\begin{array}{l}
X = p(\Imag A - \Imag B) + q(\Real A + \Real B),\\
Y = p(2+\Real A + \Real B) - q(\Imag A - \Imag B).
\end{array}
\end{cases}$$
Observe that the determinant associated to the system is $-\Delta$.  Since $\Delta \neq 0$, if $X=Y=0$, the system has trivial solution and therefore $B=\bar A$ and, in particular, $\Delta = 4(|A|^2 + \Real A)\neq~0$.  

 Finally,  the relation between the bases $\{\nu^i\}_{i=1}^6$ and $\{e^i\}_{i=1}^6$ can be deduced from
$$
\xymatrix{
\alpha
\ar@/^1.5pc/[rrr]^{\txt{Table 5}}
\ar[r]
& \beta \ar[r]
& \nu \ar[r]
& e.}
$$
\vspace{-0.3cm}
\end{proof}

\begin{center}
\renewcommand{\arraystretch}{1.3}
\begin{table}[!ht]
{\resizebox{\textwidth}{!}{
\begin{tabular}{|c|c|c|l|c|}
\hline
\multicolumn{3}{|c|}{$\Imag A\neq \Imag B$,\ \ $\Delta(A,B)\not=0$}& Real basis $\{e^1,\ldots,e^6\}$ & Lie algebra\\[4pt]
\hline

\multirow{8}{*}{$|A|=|B|$}&\multicolumn{2}{c|}{\multirow{4}{*}{$Y=0$}}&$\omega^1=e^1+ie^2,\quad \omega^2=e^3+ie^4,$&\multirow{4}{*}{$\frs_9$}\\[4pt]
&\multicolumn{2}{c|}{}&$\omega^3=\frac{-\Imag A}{2 (|A|^2 + \Real A)}\left(e^5+\frac{1+\Real A}{\Imag A}\,e^6\right)$&\\[4pt]
&\multicolumn{2}{c|}{}&$\phantom{\omega^3=}- i \frac{\Imag A}{2 (|A|^2 + \Real A)} \left(\frac{\Real A}{\Imag A}\,e^5 - e^6\right)$&\\[4pt]  \cline{2-5}

&\multicolumn{2}{c|}{\multirow{4}{*}{$Y\neq 0$}}&$\omega^1=e^3+ie^4,\quad \omega^2=e^1-ie^2,$&\multirow{3}{*}{$\frs_{10}^{\alpha,0}$}\\[4pt]
&\multicolumn{2}{c|}{}&$\omega^3=\frac{\Imag A-\Imag B}{\Delta}\left(e^6-\frac{2+\Real A + \Real B}{\Imag A-\Imag B}\,e^5\right)$&\multirow{3}{*}{$\alpha=\frac{-Y(\Imag A-\Imag B)}{\Delta}$}\\[4pt]
&\multicolumn{2}{c|}{}&\phantom {$\omega^3=$} $+\frac{i(\Imag A-\Imag B)}{\Delta} \left(e^5 + \frac{\Real A + \Real B}{\Imag A-\Imag B}\,e^6\right)$&\\[4pt] \cline{1-5}

\multirow{12}{*}{$|A|\neq |B|$}&\multirow{8}{*}{$Y=0$}&\multirow{4}{*}{$\Delta = \pm(|A|^2 - |B|^2) $}&$\omega^1=e^3+ie^4,\quad \omega^2=e^1-ie^2,$&\multirow{4}{*}{$\frs_{12}$}\\[4pt]
&&&$\omega^3=\frac{\Imag A-\Imag B}{\Delta}\left(e^5-\frac{2+\Real A + \Real B}{\Imag A-\Imag B}\,e^6\right)$&\\[4pt]
&&&$\phantom{\omega^3=}+ \frac{i(\Imag A-\Imag B)}{\Delta}\left(\frac{\Real A + \Real B}{\Imag A-\Imag B}\,e^5 + e^6\right)$&\\[4pt] \cline{3-5}

&&\multirow{4}{*}{$\Delta \neq  \pm(|A|^2 - |B|^2) $}&$\omega^1=e^3+ie^4,\quad \omega^2=e^1-ie^2,$&\multirow{3}{*}{$\frs_{11}^{\alpha'}$}\\[4pt]
&&&$\omega^3=\frac{\Imag A-\Imag B}{\Delta}\left(e^5-\frac{2+\Real A + \Real B}{\Imag A-\Imag B}\,e^6\right)$&\multirow{3}{*}{$\alpha' = \frac{-X(\Imag A-\Imag B)}{\Delta}$}\\[4pt]
&&&$\phantom{\omega^3=}+ \frac{i(\Imag A-\Imag B)}{\Delta}\left(\frac{\Real A + \Real B}{\Imag A-\Imag B}\,e^5 + e^6\right)$&\\[4pt] \cline{2-5}

&\multicolumn{2}{c|}{\multirow{5}{*}{$Y\neq 0$}}&$\omega^1=e^1-ie^2,\quad \omega^2=e^3+ie^4,$&\multirow{3}{*}{$\frs_{10}^{\alpha, \beta}$}\\[4pt]
&\multicolumn{2}{c|}{}&$\omega^3=\frac{X(2+\Real A + \Real B)-Y(\Imag A-\Imag B)}{X\, \Delta}\,e^5$ &\multirow{3}{*}{$\alpha=\frac YX$}\\[4pt]
&\multicolumn{2}{c|}{}&$\phantom{\omega^3 = }+ \frac{2+\Real A+\Real B}{Y(\Imag A-\Imag B)}\,e^6  - \frac{i}{Y}\,e^6$&\multirow{3}{*}{$\beta = \frac{\Delta}{Y(\Imag A-\Imag B)}$}\\[5pt]
&\multicolumn{2}{c|}{}&\qquad $ - i\,\frac{X(\Imag A-\Imag B) + Y(\Real A + \Real B)}{X\,\Delta}\,e^5$&\\[4pt]
\hline
\end{tabular}}}
\medskip
\caption{Lie algebras underlying equations~\eqref{split_C2} with $\varepsilon=1$,
$\Imag A\neq \Imag B$ and $\Delta(A,B)\not=0$ (Lemma~\ref{caso_indep}).}\label{tabla6}
\end{table}
\end{center}

The previous lemmata provide the following

\begin{proposition}\label{prop3}
The unimodular solvable $6$-dimensional Lie algebras underlying equations~\eqref{split_C2} with $\varepsilon = 1$ are $\mathfrak s_2$,
$\mathfrak s_3$, $\mathfrak s_4$, $\mathfrak s_5^{\alpha}$, $\mathfrak s_6^{\alpha,\beta}$,
$\mathfrak s_7^{\alpha}$, $\mathfrak s_{8}^{\alpha}$, $\mathfrak s_{9}$, $\mathfrak s_{10}^{\alpha, \beta}$, $\mathfrak s_{11}^{\alpha}$,
$\mathfrak s_{12}$.
\end{proposition}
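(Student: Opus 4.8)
The plan is to obtain Proposition~\ref{prop3} as an immediate consequence of the four preceding lemmata, which between them cover every admissible value of the parameters $A,B\in\C$ in the case $\varepsilon=1$. First I would observe that the quantity $\Imag A-\Imag B$ occurring throughout the real structure equations~\eqref{split_C2_real_caso2} partitions the parameter space into the three mutually exclusive and jointly exhaustive cases $\Imag A=\Imag B=0$, $\Imag A=\Imag B\neq0$, and $\Imag A\neq\Imag B$. In the last of these, the discriminant $\Delta(A,B)$ furnishes the further dichotomy $\Delta=0$ versus $\Delta\neq0$, according to whether the $1$-forms $\nu^5,\nu^6$ are linearly dependent or independent. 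Hence Lemma~\ref{lema imA=imB=0}, Lemma~\ref{lema imA=imB no0}, Lemma~\ref{caso_dep} and Lemma~\ref{caso_indep} together treat the entire range of $(A,B)\in\C^2$, with no parameter value omitted.

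Next I would simply assemble the union of the Lie algebras produced by the four lemmata. Reading off their conclusions, the first yields $\frs_2,\frs_4,\frs_7^\alpha,\frs_9,\frs_{11}^\alpha,\frs_{12}$; the second yields $\frs_3,\frs_5^\alpha,\frs_9,\frs_{10}^{\alpha,\beta}$; the third yields $\frs_5^\alpha,\frs_6^{\alpha,\beta},\frs_8^\alpha,\frs_{10}^{\alpha,0}$; and the fourth yields $\frs_9,\frs_{10}^{\alpha,\beta},\frs_{11}^\alpha,\frs_{12}$. The union of these four lists is precisely $\frs_2,\frs_3,\frs_4,\frs_5^\alpha,\frs_6^{\alpha,\beta},\frs_7^\alpha,\frs_8^\alpha,\frs_9,\frs_{10}^{\alpha,\beta},\frs_{11}^\alpha,\frs_{12}$, which is the asserted list. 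Here one should note that the family $\frs_{10}^{\alpha,0}$ appearing in Lemma~\ref{caso_dep} is the specialization $\beta=0$ of $\frs_{10}^{\alpha,\beta}$, so it contributes nothing beyond what is already recorded and no finer notation is required in the final statement.

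The only content beyond this bookkeeping is confirming that the case division is genuinely exhaustive and that the reductions and rescalings carried out inside each lemma stay valid on each parameter region, including its boundary. Accordingly, the main (and only minor) obstacle is the parameter-matching step: verifying that the isomorphism classes $\frs_k^{\alpha'}$ and $\frs_k^{\alpha',\beta'}$ obtained with the unnormalized parameters in Tables~\ref{tabla3}--\ref{tabla6} coincide with the normalized ranges of $\alpha$ and $\beta$ declared in Theorem~\ref{thm:main-thm}. This is exactly the identification deferred to Appendix~\ref{apendice} and already invoked within the individual lemmata; granting it, no further computation remains and the proposition follows.
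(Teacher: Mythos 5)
Your proposal is correct and takes essentially the same route as the paper: there, Proposition~\ref{prop3} is stated as an immediate consequence of the four preceding lemmata (``The previous lemmata provide the following''), which is exactly the union-over-exhaustive-cases argument you spell out. The points you flag as needing care --- exhaustiveness of the partition by $\Imag A-\Imag B$ and $\Delta(A,B)$, the absorption of $\frs_{10}^{\alpha,0}$ into the family $\frs_{10}^{\alpha,\beta}$, and the normalization of parameter ranges via Appendix~\ref{apendice} --- are precisely what the paper handles inside the individual lemmata and leaves implicit at this final step.
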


\bigskip

As a consequence of the previous propositions, we prove the main result of this section:

\bigskip

\noindent
\emph{Proof of Theorem~\ref{thm:main-thm}.} 
The ``only if'' part of the theorem follows from Propositions~\ref{prop_KT}, \ref{prop2} and~\ref{prop3}.

For the proof of the ``if'' part, we must show that all the Lie algebras in the list admit a
splitting-type complex structure.
This is clear for the Lie algebras ${\mathfrak s}_{1}$, ${\mathfrak s}_{2}$,
${\mathfrak s}_{3}$, ${\mathfrak s}_{4}$,
${\mathfrak s}_{9}$ and ${\mathfrak s}_{12}$ from Proposition~\ref{prop_KT}
and Tables~\ref{tabla2}, \ref{tabla3}
and~\ref{tabla_imA=imBneq0}. The remaining Lie algebras in the list depend on parameters,
so we will show next
particular appropriate values of $A$ and $B$ that define a complex structure of splitting type
on each one of the Lie
algebras ${\mathfrak s}_{5}^{\alpha}$, ${\mathfrak s}_{6}^{\alpha,\beta}$,
${\mathfrak s}_{7}^{\alpha}$,
${\mathfrak s}_{8}^{\alpha}$,
${\mathfrak s}_{10}^{\alpha,\beta}$ and ${\mathfrak s}_{11}^{\alpha}$
in the list.

For the Lie algebra ${\mathfrak s}_{5}^{\alpha}$, $\alpha>0$, we consider $A$ and $B$ given by
$A=B=-1+i\,\alpha$.
These values of the parameters $A$ and $B$ lie in Table~\ref{tabla_imA=imBneq0},
since $\Imag A=\Imag B=\alpha\neq 0$ and $\Real A=\Real B=-1$.
Hence,
the (1,0)-forms $\omega^1=-\frac{\Imag A}{|\Imag A|}e^3+ie^4=-e^3+ie^4$,
$\omega^2=e^1+ie^2$,
$\omega^3=\frac 12\,e^5 + ie^6$ define a splitting-type complex structure
on ${\mathfrak s}_{5}^{\alpha}$
according to Table~\ref{tabla_imA=imBneq0}.

For the other Lie algebras the argument is similar.
We show below particular appropriate values of $A,B$
and the table where the corresponding basis
of (1,0)-forms is given:

\vskip.1cm

\noindent
- For ${\mathfrak s}_{6}^{\alpha,\beta}$, $\alpha>0$ with $\alpha \not=1$, $0<\beta<1$,
it suffices to take $A=\frac{-2}{1+\beta} + i \frac{1-\alpha^2}{\alpha(1+\beta)}$ and
$B=i \frac{1+\alpha^2}{\alpha(1+\beta)}$ in Table~\ref{tabla5};

\vskip.1cm

\noindent
- For ${\mathfrak s}_{6}^{1,\beta}$, $0<\beta<1$, we can take
$A=-\frac{1+\beta}{1+\beta^2} + i \frac{1-\beta}{1+\beta^2}$
and $B=-\frac{1-\beta}{1+\beta^2} + i \frac{1+\beta}{1+\beta^2}$ in Table~\ref{tabla5};

\vskip.1cm

\noindent
- For ${\mathfrak s}_{7}^{\alpha}$, $0<\alpha\leq 1$, we take $A=-B=\alpha$ in Table~\ref{tabla3};

\vskip.1cm

\noindent
- For ${\mathfrak s}_{8}^{\alpha}$, $\alpha > 0$, we take
$A=\frac{1}{1+\alpha^2} (1- \alpha^2 +2i \alpha)$
and $B=-1$ in Table~\ref{tabla5};

\vskip.1cm

\noindent
- For ${\mathfrak s}_{10}^{\alpha,\beta}$, $\alpha \not=0$, $\beta \in \mathbb{R}$, we can take
$A=-1-\bar B$
with $B=\frac{1}{2} (\alpha\beta -1 +i \beta)$ in Table~\ref{tabla2};

\vskip.1cm

\noindent
- Finally, for the Lie algebra ${\mathfrak s}_{11}^{\alpha}$, $\alpha\in (0,1)$, we take
$A=-1-B$
with $B=-\frac{1}{2} ( 1 + \alpha)$ in Table~\ref{tabla2}.
\hfill
$\Box$

\vskip.4cm

\begin{remark}\label{remark-comparacion}
{\rm
In view of Remark~\ref{remark-trivialbundle}, a 6-dimensional unimodular (non-nilpotent) solvable Lie algebra admits a complex structure of
splitting type with a non-zero closed $(3, 0)$-form if and only if $B=-\varepsilon$ in the structure equations~\eqref{split_C2}.
Looking at the tables above, it is easy to check that this condition is satisfied if and only if the
Lie algebra is isomorphic to $\mathfrak s_{4}$, $\mathfrak s_{7}^{1}$, $\mathfrak s_{8}^{\alpha}$ or $\mathfrak s_{12}$,
which is in accord with  \cite[Theorem 2.8]{fino-otal-ugarte} (notice that these Lie algebras correspond, respectively, to
the Lie algebras labeled as $\frg_1$, $\frg_2^{\alpha}$ and $\frg_8$ in \cite{fino-otal-ugarte}).

On the other hand, the Lie algebras obtained in Theorem~\ref{thm:main-thm}
appear with different notations in previous papers. Next, we make explicit
the correspondence with \cite{bock,turkowski}:

\begin{tabular}{llll}
$\mathfrak s_1=\frg_{4,9}^0\oplus \R^2$,&
$\mathfrak s_2=\frg_{3,5}^0\oplus \R^3$,&
$\mathfrak s_3=\frg_{3,5}^0\oplus\frg_{3,5}^0$,&
$\mathfrak s_4=\frg_{5,7}^{-1,-1,1}\oplus \R$,\\
$\mathfrak s_5^{\alpha}=\frg_{5,13}^{1,-1,\alpha}\oplus \R$,&
$\mathfrak s_6^{\alpha,\beta}=\frg_{5,17}^{\alpha, -\alpha, \beta}\oplus \R $,&
$\mathfrak s_7^{\alpha}=\frg_{5,17}^{0,0, \alpha}\oplus \R$,&
$\mathfrak s_8^{\alpha}=\frg_{5,17}^{\alpha, -\alpha, 1}\oplus \R$,\\
$\mathfrak s_{9}=N_{6,13}^{0,-1,0,-1}$,&
$\mathfrak s_{10}^{\alpha,\beta}=N_{6,15}^{-1,\alpha,\beta,-\beta}$,&
$\mathfrak s_{11}^{\alpha}=N_{6,18}^{0,\alpha,-1}$,&
$\mathfrak s_{12}=N_{6,18}^{0,-1,-1} $.
\end{tabular}

It turns out that the only Lie algebra that is completely solvable is $\mathfrak s_4$.}
\end{remark}

\vskip.2cm

\begin{remark}\label{remark-lattices} 
{\rm     
As regards solvmanifolds of splitting type, we notice that the condition (5) in Definition~\ref{def-splitting-type} is satisfied by the Kodaira-Thurston manifold;
see~\cite{CF,CFGU2,R1,R2} for general results on the Dolbeault cohomology of nilmanifolds.
Therefore, we need to study the existence of lattices in the connected and simply-connected solvable Lie groups $G_k$ corresponding
to the Lie algebras $\mathfrak{s}_{k}$ in Theorem~\ref{thm:main-thm}.  
The Lie groups $G_{1}$, $G_{2}$ and $G_3$ admit lattices (see \cite[Table 8]{bock}). Also $G_{4}$ admits lattices
by \cite{fino-otal-ugarte,otal-phd}. Moreover, by \cite[page 13]{console-macri} we have:
\begin{itemize}
 \item $G_{6}^{\alpha,\beta}$ admits lattices if and only if $\beta=\frac{r_1}{r_2}\in\Q$ and $\alpha$ satisfies $\exp(2\pi \alpha^{-1}r_2)+\exp(-2\pi \alpha^{-1}r_2)\in\Z$, that is, $\alpha$ is the form $\alpha_n:=\frac{2\pi r_2}{\log\left(\frac{1}{2}\left(n\pm\sqrt{n^2-4}\right)\right)}$ with $n\in\N$;
 \item $G_{7}^{\alpha}$ admits lattices if and only if $\alpha\in\Q$;
 \item $G_{8}^{\alpha}$ admits lattices if and only if $\exp(2\pi \alpha^{-1})+\exp(-2\pi p\alpha^{-1})\in\Z$, that is, for any $\alpha$ of the form $\alpha_n:=\frac{2\pi}{\log\left(\frac{1}{2}\left(n\pm\sqrt{n^2-4}\right)\right)}$ with $n\in\N$.
\end{itemize}

In Proposition~\ref{G5} below we show the existence of lattices for a countable family of $G_{5}^{\alpha}$.
Note that the results on the existence of lattices are consistent with \cite[Proposition 8.7]{Witte}, where it is shown that only
countably many non-isomorphic simply-connected solvable Lie groups admit a lattice.
Therefore, one cannot expect a lattice to exist on $G_{5}^{\alpha}$, $G_{6}^{\alpha,\beta}$, $G_{7}^{\alpha}$ or $G_{8}^{\alpha}$ for every value of $\alpha,\beta$, and so, in this sense, our proposition below completes the cases when the Lie algebra is decomposable.

The indecomposable case is more difficult to treat, but in Section~\ref{sec:nakamura} we will provide
explicit lattices
on the Lie group associated to $\mathfrak{s}_{12}$
(which is the Lie algebra underlying the Nakamura manifold~\cite{nakamura}, see also \cite{yamada})
with interesting properties with respect to
the $\partial\db$-Lemma.
}
\end{remark}






\begin{proposition}\label{G5}
There is a countable family $\{\alpha_{s,n}\}\subset\R^+$   
such that the connected and simply-connected Lie group $G_{5}^{\alpha_{s,n}}$ admits a lattice.
\end{proposition}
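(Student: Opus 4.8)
The plan is to exploit the structure of $G_5^{\alpha}$ as an almost-nilpotent solvable group and to reduce the existence of a lattice to a single arithmetic condition on its monodromy. First I would read off from $\mathfrak s_5^{\alpha}=(e^{15},e^{25},-e^{35}+\alpha\,e^{45},-\alpha\,e^{35}-e^{45},0,0)$ that $\langle e_1,e_2,e_3,e_4\rangle$ is an abelian ideal, that $e_6$ is central, and that $e_5$ acts on $\langle e_1,\dots,e_4\rangle$ by the block-diagonal matrix $\mathrm{ad}_{e_5}=\mathrm{diag}(1,1)\oplus\left(\begin{smallmatrix}-1&\alpha\\-\alpha&-1\end{smallmatrix}\right)$. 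Hence $G_5^{\alpha}\cong\left(\R\ltimes_{\varphi}\R^4\right)\times\R$, where $\varphi(t)=\exp(t\,\mathrm{ad}_{e_5})$ is
$$\varphi(t)=\begin{pmatrix}e^{t}I_2&0\\0&e^{-t}\left(\begin{smallmatrix}\cos\alpha t&\sin\alpha t\\-\sin\alpha t&\cos\alpha t\end{smallmatrix}\right)\end{pmatrix}.$$
Since $\R$ carries the lattice $\Z$ and the product of a lattice of $H:=\R\ltimes_{\varphi}\R^4$ with $\Z$ is a lattice of $G_5^{\alpha}$, it suffices to produce lattices in $H$. For this I would invoke the standard criterion that $H$ admits a lattice as soon as there is some $t_0\neq0$ for which $\varphi(t_0)$ is conjugate in $\mathrm{GL}(4,\R)$ to a matrix $M\in\mathrm{GL}(4,\Z)$: writing $P\varphi(t_0)P^{-1}=M$, the set $\Lambda:=P^{-1}\Z^4$ is a $\varphi(t_0)$-invariant lattice of $\R^4$, and $\Gamma:=t_0\Z\ltimes_{\varphi}\Lambda$ is a lattice of $H$.

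Next I would analyse when $\varphi(t_0)$ can be made integral. Its eigenvalues are $e^{t_0}$ (double) together with $e^{-t_0}e^{\pm i\alpha t_0}$, so its characteristic polynomial is $(\lambda-e^{t_0})^2\bigl(\lambda^2-2e^{-t_0}\cos(\alpha t_0)\,\lambda+e^{-2t_0}\bigr)$. The crucial observation is that a monic integer quartic having a non-rational double root $e^{t_0}$ must equal the square of the (necessarily quadratic) minimal polynomial of $e^{t_0}$; this forces the complex pair to be real, i.e. $\alpha t_0\in\pi\Z$, and it forces $e^{t_0}$ to be a quadratic unit. I would therefore impose the two simultaneous conditions $\alpha t_0\in2\pi\Z$ and $e^{t_0}+e^{-t_0}\in\Z$. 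Concretely, for integers $n\geq3$ and $s\geq1$ I set $\lambda_n:=\tfrac12\bigl(n+\sqrt{n^2-4}\bigr)>1$, $t_0:=\log\lambda_n$, and $\alpha=\alpha_{s,n}:=\dfrac{2\pi s}{\log\lambda_n}\in\R^+$.

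With these choices the rotation block trivialises at $t_0$, since $\cos(\alpha_{s,n}t_0)=1$ and $\sin(\alpha_{s,n}t_0)=0$, so $\varphi(t_0)=\mathrm{diag}(\lambda_n,\lambda_n,\lambda_n^{-1},\lambda_n^{-1})$. Because $\lambda_n$ and $\lambda_n^{-1}$ are precisely the two real distinct eigenvalues of the companion matrix $C_n:=\left(\begin{smallmatrix}0&-1\\1&n\end{smallmatrix}\right)\in\mathrm{SL}(2,\Z)$ of $\lambda^2-n\lambda+1$, the diagonal matrix $\varphi(t_0)$ is conjugate in $\mathrm{GL}(4,\R)$ to $C_n\oplus C_n\in\mathrm{SL}(4,\Z)$. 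Feeding this into the criterion above yields a lattice in $H$, and hence in $G_5^{\alpha_{s,n}}$, for every $n\geq3$ and $s\geq1$; as $(s,n)$ ranges over these values, $\{\alpha_{s,n}\}$ is a countable subset of $\R^+$ of the required kind.

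I expect the main obstacle to be the arithmetic step rather than the group-theoretic reduction. One cannot choose $t_0$ freely: the repeated real eigenvalue $e^{t_0}$ together with a complex pair of reciprocal modulus $e^{-t_0}$ makes integrality of the characteristic polynomial a genuine Diophantine constraint, which simultaneously forces the rotation angle $\alpha t_0$ to be a multiple of $2\pi$ and $e^{t_0}$ to be a quadratic unit. It is exactly the coincidence of these two conditions that isolates the admissible values $\alpha_{s,n}$, and verifying that this coincidence can indeed be realised — and that the resulting $\varphi(t_0)$ is honestly conjugate to $C_n\oplus C_n$ over $\R$ — is the heart of the argument.
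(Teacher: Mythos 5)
Your proposal is correct and follows essentially the same route as the paper's proof: both reduce to the almost-abelian/almost-nilpotent criterion that $\exp(t_0\,\mathrm{ad}_{e_5})$ be conjugate to an integer matrix, choose $t_0$ so that the rotation block trivializes, solve the integrality condition $e^{t_0}+e^{-t_0}=n\in\Z$ to obtain $\alpha_{s,n}=2\pi s/\log\lambda_n$, and conjugate the resulting diagonal matrix to a block-companion matrix in $\mathrm{GL}(4,\Z)$. The only differences are cosmetic: the paper also admits odd multiples of $\pi$ (taking $\alpha\tau=s\pi$, with $(-1)^s$ entering the characteristic polynomial, so its family is slightly larger), and it writes down the conjugating matrix $Q$ explicitly where you invoke diagonalizability.
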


\begin{proof}
The Lie algebra of $G_{5}^{\alpha}$, $\alpha>0$, can be written as $\frs_{5}^{\alpha}=\frg_{5,13}^{1,-1,\alpha}\oplus\R$ with $\frg_{5,13}^{1,-1,\alpha}=\R\ltimes_{\text{ad}_{e_5}}\R^{4}$.
Since the simply-connected Lie group $H_{\alpha}$ corresponding to $\frg_{5,13}^{1,-1,\alpha} $ is almost-nilpotent \cite{bock},
it admits a lattice if and only if there exists
$\tau\neq 0$ such that the matrix $\exp(\tau\,\text{ad}_{e_5})$ belongs to the conjugation class of an integer matrix.
We have (see~\cite[p. 41]{bock}) that $\exp(t\,\text{ad}_{e_5})$ is given by
\begin{equation}\label{matriz_exp}
 \exp(t\,\text{ad}_{e_5})=\begin{pmatrix}
                 e^{-t}&0&0&0\\
                 0&e^{-t}&0&0\\
                 0&0&e^{t}\cos\alpha t&-e^{t}\sin\alpha t\\
                 0&0&e^{t}\sin\alpha t&e^{t}\cos\alpha t
                \end{pmatrix}.
\end{equation}
Let $\tau\neq 0$ be such that  $\sin\alpha \tau=0$, that is, $\tau=\frac{s\pi}{\alpha}$ with $0\neq s\in\Z$. In this case the matrix~\eqref{matriz_exp} is diagonal and its characteristic
polynomial is
\begin{equation}\label{pol}
p(\lambda)=\left(\lambda^2-(e^{-\tau}+(-1)^se^{\tau})\lambda +(-1)^s\right)^2.
\end{equation}
Now, if $\exp(\tau\,\text{ad}_{e_5})$ lies in the conjugation class of an integer matrix, then $p(\lambda)\in\Z[\lambda]$, that is, $e^{-\tau}+(-1)^se^{\tau}=n$, for some $n\in\Z$. Solving this equation, we get
$$
\tau_{s,n}=-\log\left(\frac{n+\sqrt{n^2-4(-1)^s}}{2}\right),\quad
\alpha_{s,n}=-\frac{s\pi}{\log\left(\frac{n+\sqrt{n^2-4(-1)^s}}{2}\right)},\text{ for }n\geq3.
$$
Substituting these values in \eqref{pol}, we get $p(\lambda)= \left(\lambda^2-n\lambda +(-1)^s\right)^2\in\Z[\lambda]$,
which is also the characteristic polynomial of the integer matrix
\begin{equation*}
B_s=\begin{pmatrix}
                 0&(-1)^{s+1}&0&0\\
                 1&n&0&0\\
                 0&0&0&(-1)^{s+1}\\
                 0&0&1&n
                \end{pmatrix}\in\GL(4,\Z).
\end{equation*}
In addition, it turns out that $Q \exp(\tau_{s,n}\,\text{ad}_{e_5})\, Q^{-1}=B_s$, where
$$
Q=\begin{pmatrix}
                 0&\beta_+&0&\beta_-\\
                 0&1&0&1\\
                 \beta_+&0&\beta_-&0\\
                 1&0&1&0
                \end{pmatrix}, \qquad \beta_{\pm}=\frac{1}{2}\left(-n\pm\sqrt{n^2-4(-1)^s}\right),$$
concluding the proof.
\end{proof}



\section{Hermitian geometry of splitting-type complex structures}\label{sec:hermitian}

\noindent
In this section we study the existence of special Hermitian metrics on solvmanifolds endowed with a complex structure of splitting type.
From now on, $F$ denotes the fundamental $(1,1)$-form associated to a Hermitian metric $g$, and $n$ is the complex dimension of the complex manifold.

It is well-known that the {\em K\"ahler condition} ``$dF=0$'' can be weakened in the
``{\em geometry with torsion}'' direction, and the main classes of Hermitian structures that arise are:
\begin{itemize}
 \item {\em Hermitian-symplectic} (or {\em holomorphic-tamed}), that is, $F$ is the $(1,1)$-component of a $d$-closed $2$-form;
 \item {\em SKT} ({\em strong K\"ahler with torsion} or {\em pluri-closed}), that is, $\partial\bar\partial F=0$;
 \item {\em $k$-Gauduchon} \cite{FWW}, that is, $\partial\bar\partial F^k\wedge F^{n-k-1}=0$, where $k=1,\ldots, n-2$.
\end{itemize}

\vskip.1cm

The following implications are clear from the definitions:
$$
\text{K\"ahler} \quad \Rightarrow \quad \text{Hermitian-symplectic} \quad \Rightarrow \quad \text{SKT} \quad \Rightarrow \quad \text{$1$-Gauduchon}.
$$
So far, no example of compact complex non-K\"ahler manifold admitting Hermitian-symplectic structure is known, see \cite[page 678]{li-zhang}, \cite[Question 1.7]{streets-tian}.

\vskip.2cm

Other interesting and well-known classes of Hermitian metrics on compact complex manifolds are:
\begin{itemize}
 \item {\em balanced} (in the sense of Michelsohn \cite{Mi}), that is, $dF^{n-1}=0$;
 \item {\em strongly Gauduchon} \cite{Pop0}, that is, $F^{n-1}$ is the $(n-1,n-1)$-component of a $d$-closed $(2n-2)$-form; equivalently, the $(n, n-1)$-form $\partial F^{n-1}$ is $\bar\partial$-exact;
 \item {\em Gauduchon} \cite{gauduchon}, that is, $\partial\bar\partial F^{n-1}=0$.
\end{itemize}

\vskip.1cm

It is clear that
$$
\text{K\"ahler} \quad \Rightarrow \quad \text{balanced} \quad \Rightarrow \quad \text{strongly Gauduchon} \quad \Rightarrow \quad \text{Gauduchon}.
$$
We recall also that any conformal class of Hermitian structures admits a Gauduchon representative by the foundational theorem by Gauduchon \cite[Théorème 1]{gauduchon}. A recent conjecture of Fino and Vezzoni \cite{fino-vezzoni} states that in the
compact non-K\"ahler case it is never possible to find
an SKT metric and also a balanced one, and they prove the conjecture for nilmanifolds \cite{fino-vezzoni-nilmanifolds}
and for 6-dimensional solvmanifolds having holomorphically trivial canonical bundle \cite{fino-vezzoni}.
On the other hand, Popovici \cite{Pop-2015} proposes, for $\partial\db$-manifolds, a conjecture relating their balanced and Gauduchon cones,
and he observes that, if proved to hold, the conjecture would imply
the existence of a balanced structure on any $\partial\db$-manifold.  Recall that a  $\partial\db$-manifold is a compact complex manifold $X$ satisfying the $\partial\bar\partial$-Lemma, that is, 
if for any
$d$-closed form $\gamma$ of pure type on $X$, the following exactness properties are equivalent: 

\vskip.2cm

\hskip1cm
$\gamma$ is $d$-exact $\Longleftrightarrow$
$\gamma$ is $\partial$-exact
$\Longleftrightarrow$
$\gamma$ is $\bar\partial$-exact
$\Longleftrightarrow$
$\gamma$ is $\partial\bar\partial$-exact.



\medskip

We have the following general result.

\begin{proposition}\label{prop:balanced-N}
Let $X=G/\Gamma$ be a solvmanifold endowed with a complex structure of splitting type,
i.e., $G=\C\ltimes_\varphi N$, where $N$ is nilpotent.
Then, $X$ admits a balanced (respectively, strongly Gauduchon) Hermitian structure if and only if $N$ admits an invariant balanced (respectively, strongly Gauduchon) Hermitian structure.
\end{proposition}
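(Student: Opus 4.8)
The plan is to establish both implications after reducing to invariant Hermitian structures, the reduction being the first and most delicate step. Write $n=k+1$ for the complex dimension of $X$, where $2k=\dim_\R N$, let $\zeta:=dz$ be the closed $(1,0)$-form spanning the $\C$-direction, and let $\{\eta^1,\dots,\eta^k,\zeta\}$ be the invariant $(1,0)$-coframe described after Definition~\ref{def-splitting-type}, with $\eta^j=\alpha_j^{-1}\varphi^j$. Recall that the balanced condition is $dF^{n-1}=0$ and the strongly Gauduchon condition is that $\partial F^{n-1}$ be $\db$-exact. Since $G$ admits the lattice $\Gamma$ it is unimodular, so integration over $X$ against the induced invariant measure defines a symmetrization operator on forms that yields $G$-invariant forms and commutes with $d$, $\partial$ and $\db$. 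The obstacle is that symmetrization does not commute with $F\mapsto F^{n-1}$, so one cannot average $F$ directly; instead I would use Michelsohn's correspondence~\cite{Mi}, under which a balanced (resp.\ strongly Gauduchon) metric is the same datum as a strictly positive $(n-1,n-1)$-form $\Omega$ with $d\Omega=0$ (resp.\ with $\partial\Omega$ being $\db$-exact), every such $\Omega$ being $\Omega=F^{n-1}$ for a unique positive $(1,1)$-form $F$. Averaging $\Omega$ gives an invariant strictly positive $(n-1,n-1)$-form $\tilde\Omega$ that is still $d$-closed (resp.\ whose $\partial$ is still $\db$-exact), and its Michelsohn root $\tilde F$ is an invariant balanced (resp.\ strongly Gauduchon) metric. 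It therefore suffices to work at the level of the Lie algebra $\frg=\C\ltimes_\varphi\mathfrak{n}$.

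The core of the argument is the decomposition $d=d_{\mathfrak n}+d_{\C}$ of the invariant differential, where $d_{\mathfrak n}$ collects the nilpotent brackets of $\mathfrak{n}$ and $d_{\C}$ the contribution of the characters; by the structure equations of Theorem~\ref{prop:eq-struttura} each $d_{\C}\eta^j$ is a combination of $\eta^j\wedge\zeta$ and $\eta^j\wedge\bar\zeta$. The decisive point is that the total $\zeta$-weight (and $\bar\zeta$-weight) carried by the invariant volume form $\eta^{1}\wedge\bar\eta^{1}\wedge\cdots\wedge\eta^{k}\wedge\bar\eta^{k}$ of $\mathfrak{n}$ is the real trace of the $\C$-action, which vanishes exactly by the unimodularity identity appearing in the proof of Proposition~\ref{prop:eq-struttura-1}. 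For the ``if'' direction, suppose $\mathfrak{n}$ carries an invariant balanced metric $F_0$, i.e.\ $d_{\mathfrak n}F_0^{\,k-1}=0$, and set $F:=F_0+i\,c\,\zeta\wedge\bar\zeta$ with $c>0$. Using $d\zeta=0$ and that every term of $d_{\C}F_0^{\,k-1}$ carries a factor $\zeta$ or $\bar\zeta$, one finds
\begin{equation*}
dF^{n-1}=dF_0^{\,k}+k\,i\,c\,\bigl(dF_0^{\,k-1}\bigr)\wedge\zeta\wedge\bar\zeta
=k\,i\,c\,\bigl(d_{\mathfrak n}F_0^{\,k-1}\bigr)\wedge\zeta\wedge\bar\zeta,
\end{equation*}
where $dF_0^{\,k}=d_{\C}F_0^{\,k}=0$ because $F_0^{\,k}$ is proportional to the invariant volume form of $\mathfrak{n}$ and carries zero $\zeta$- and $\bar\zeta$-weight by unimodularity. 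Hence $F$ is balanced. The strongly Gauduchon case is analogous, replacing $d$-closedness throughout by $\partial$-closedness modulo $\db$-exactness and checking that the residual $d_{\C}$-terms of top weight are $\db$-exact.

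For the converse I would begin with an arbitrary invariant balanced (resp.\ strongly Gauduchon) metric $F=F_0+\bigl(\theta\wedge\bar\zeta+\bar\theta\wedge\zeta\bigr)+i\,c\,\zeta\wedge\bar\zeta$ on $\frg$, with $\theta$ a $(1,0)$-form in the $\eta^j$ and $c>0$, and complete the square: the change of coframe $\tilde\eta^j=\eta^j+(\text{const})\,\zeta$ leaves $\zeta=dz$ and all structure equations unaffected (as $d\zeta=0$) and removes the mixed terms, replacing $F$ by $\hat F_0+i\,c\,\zeta\wedge\bar\zeta$ with $\hat F_0$ a positive $(1,1)$-form purely in the $\tilde\eta^j$. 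The identity above then forces $d_{\mathfrak n}\hat F_0^{\,k-1}=0$, since wedging with $\zeta\wedge\bar\zeta$ is injective on $\wedge^{2k-1}\mathfrak{n}^*$; as $\hat F_0$ restricts to a positive $(1,1)$-form on $\mathfrak{n}$, this is precisely an invariant balanced structure on $N$. I expect the two genuine obstacles to be: making symmetrization legitimate on a solvmanifold (which is why the Michelsohn-root reformulation, rather than naive averaging of $F$, is essential), and the bookkeeping showing that unimodularity annihilates all $d_{\C}$-contributions of top weight and that completing the square is compatible with the splitting.
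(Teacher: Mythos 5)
Your reduction to invariant structures and your ``if'' direction are sound, and they coincide in substance with the paper's argument: averaging the Michelsohn form $F^{n-1}$ rather than $F$ itself is indeed what the appeal to ``the well-known symmetrization process'' amounts to, and the paper's metric $F_G:=\sqrt[n-1]{F_N^{n-1}+F_N^{n-2}\wedge\omega^{n\bar n}}$ has the same $(n-1)$-st power as your direct-sum metric $F_0+i\,c\,\zeta\wedge\bar\zeta$, the decisive input in both cases being that unimodularity forces $dF_N^{n-1}=0$.

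The converse direction, however, contains a genuine gap: the claim that the change of coframe $\tilde\eta^j=\eta^j+a_j\zeta$ ``leaves all structure equations unaffected (as $d\zeta=0$)'' is false. It is true that $d\tilde\eta^j=d\eta^j$ as forms, but re-expressing this in the new coframe destroys the splitting-adapted shape of the equations: from $d\eta^j=A_j\,\eta^j\wedge\zeta+B_j\,\eta^j\wedge\bar\zeta+d_{\mathfrak n}\eta^j$ one gets
\begin{equation*}
d\tilde\eta^j \;=\; A_j\,\tilde\eta^j\wedge\zeta+B_j\,\tilde\eta^j\wedge\bar\zeta\;-\;a_jB_j\,\zeta\wedge\bar\zeta\;+\;\left.d_{\mathfrak n}\eta^j\right|_{\eta^l=\tilde\eta^l-a_l\zeta},
\end{equation*}
so whenever some $B_j\neq 0$ (equations \eqref{split_C2} with $B\neq0$ or $\varepsilon=1$, and \eqref{split_kt} with $\varepsilon=1$) a term proportional to $\zeta\wedge\bar\zeta$ appears, and the nilpotent part changes as well. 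Consequently, after completing the square, $\hat F_0$ is \emph{not} a form in a coframe adapted to the splitting $\frg=\C\ltimes\mathfrak{n}$, the decomposition $d=d_{\mathfrak n}+d_{\C}$ does not apply to it, and the identity from your ``if'' direction cannot be invoked. If one computes honestly with $F=F_0+i\left(\beta\wedge\bar\zeta+\zeta\wedge\bar\beta\right)+i\,c\,\zeta\wedge\bar\zeta$, where $F_0$ and $\beta$ are pure forms on $\mathfrak{n}$, the $\zeta\wedge\bar\zeta$-component of $dF^{n-1}$ is not $k\,i\,c\,d_{\mathfrak n}F_0^{k-1}$ alone but this plus correction terms coupling $\beta$ to the weights $A_j,B_j$ and to $d_{\mathfrak n}$ (and there are further component equations, such as $d_{\mathfrak n}\!\left(F_0^{k-1}\wedge\beta\right)=0$), so $d_{\mathfrak n}F_0^{k-1}=0$ does not follow. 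Note where this matters: when $\mathfrak{n}$ is abelian the converse is vacuous (every invariant metric on $\C^{n-1}$ is balanced), so the gap bites exactly in the case $G=\C\ltimes_\varphi KT$, which is the case needed in Section~\ref{sec:hermitian} to exclude balanced and strongly Gauduchon metrics on $\frs_1$. For comparison, the paper runs the converse through the metric $F_N$ induced on $\mathfrak{n}$ and the identity $F_G^{n-1}=F_N^{n-1}+F_N^{n-2}\wedge\omega^{n\bar n}$; the handling of the mixed terms is admittedly terse there too, but it does not rest on the coframe-invariance claim above, which is the specific step of your argument that fails.
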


\begin{proof}
First of all, by the well-known symmetrization process, $X$ admits a balanced (respectively, strongly Gauduchon) Hermitian structure
if and only if the Lie group $G$ admits an invariant balanced (respectively, strongly Gauduchon) Hermitian structure.
Let $n$ be the complex dimension of $X$, and denote by $\{\omega^n\}$ a co-frame of $(1,0)$-forms for the factor $\C$ in $G$. First, notice that, if we have an invariant Hermitian structure $F_G$ on $G$, (respectively, an invariant Hermitian structure $F_N$ on $N$) then we can construct an invariant Hermitian structure $F_N$ on $N$ (respectively, an invariant Hermitian structure $F_G$ on $G$) such that
$$ F_G^{n-1} \;=\;F_N^{n-1} + F_N^{n-2} \wedge \omega^{n\bar n} \;, $$
with abuse of notations.
Indeed, as a vector space, the Lie algebra $\mathfrak{g}$ of $G$ splits as $\mathfrak{g}=\mathfrak{n}\oplus\R^2$, where $\mathfrak{n}$ is the Lie algebra of $N$. Invariant structures on $G$ (respectively, on $N$) are identified with linear structures on $\mathfrak{g}$
(respectively, on $\mathfrak{n}$). If we start from a Hermitian structure $F_N$ on $N$, then we can take $F_G:=\sqrt[n-1]{F_N^{n-1}+F_N^{n-2}\wedge\omega^{n\bar n}}$, which is a Hermitian structure on $G$. On the other hand, if we start from a Hermitian structure $F_G$  on $G$, then it induces a Hermitian structure $F_N$ on $N$ and the Hermitian structure $\omega^{n\bar n}$ on $\R^2$, up to multiplicative positive constants, such that $F_G = F_N + \omega^{n\bar n}$, which yields the above identity.

Since $d\omega^n=0$, we have
\begin{equation}\label{eq:balanced}
dF_G^{n-1} \;=\; dF_N^{n-1} + dF_N^{n-2} \wedge \omega^{n\bar n} \;=\; dF_N^{n-1} + d_N F_N^{n-2} \wedge \omega^{n\bar n} \;,
\end{equation}
where $d_N$ denotes the differential over $N$.

We notice also that $dF_N^{n-1}=0$ by unimodularity. Otherwise, if $dF_N^{n-1}\neq 0$, then either $d\left(F_N^{n-1}\wedge\omega^n\right)$ or $d\left(F_N^{n-1}\wedge\omega^{\bar n}\right)$ would be non-trivial $d$-exact $2n$-forms.

Then, \eqref{eq:balanced} reduces to
$$ dF_G^{n-1} \;=\; d_NF_N^{n-2} \wedge \omega^{n\bar n} \;. $$
It follows that $dF_G^{n-1}=0$ if and only if $d_NF_N^{n-2}=0$. Analogously, it follows that $\partial F_G^{n-1}$ is $\bar\partial$-exact if and only if $\partial F_N^{n-2}$ is $\bar\partial$-exact.
\end{proof}

In \cite{FKV} it is studied the existence of Hermitian-symplectic structures on complex solvmanifolds
(see \cite[Theorem 1.1]{FKV} for case when $G$ is not of type (I) and \cite[Theorem 1.2]{FKV} for other cases).  We recall that a Lie group $G$ is said to be of type (I) if for any $X \in \mathfrak{g}$,
all the eigenvalues of
the adjoint operator $\mathrm{ad}_X$ are pure imaginary.   Some of the Lie algebras in the list of Theorem~\ref{thm:main-thm} are of type (I) but other not, however
for all of them (except $\frs_1$) the Lie group is of the form $G=\C\ltimes_\varphi\C^{n-1}$,
so we give in the following result an alternative direct proof about existence of special
Hermitian metrics in this concrete case.

\begin{proposition}\label{prop:skt-Cn}
Let $X=G/\Gamma$ be a solvmanifold endowed with a complex structure of splitting type,
such that $G=\C\ltimes_\varphi\C^{n-1}$.
Then, for $X$ it is equivalent: to admit SKT structures; to admit Hermitian-symplectic structures; to admit K\"ahler structures.
\end{proposition}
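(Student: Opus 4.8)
The plan is to close the chain of implications by establishing SKT $\Rightarrow$ Kähler, since Kähler $\Rightarrow$ Hermitian-symplectic $\Rightarrow$ SKT hold in general. First I would reduce to invariant data exactly as in the proof of Proposition~\ref{prop:balanced-N}: since $G$ is unimodular and both conditions $\partial\db F=0$ and $dF=0$ are \emph{linear} in the fundamental form, averaging an arbitrary SKT metric over $X=G/\Gamma$ yields an invariant SKT metric, so it suffices to work on $\frg$ and produce an invariant Kähler metric.

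I would then compute at the Lie-algebra level. For $G=\C\ltimes_\varphi\C^{n-1}$ the structure equations read $d\omega^\ell=A_\ell\,\omega^{\ell n}+B_\ell\,\omega^{\ell\bar n}$ for $\ell=1,\dots,n-1$ and $d\omega^n=0$ (for $n=3$ these are~\eqref{split_C2}). Writing a general invariant Hermitian metric as $F=i\sum_{j,k}h_{j\bar k}\,\omega^{j}\wedge\omega^{\bar k}$ with $(h_{j\bar k})$ a constant positive-definite Hermitian matrix, and using $\partial\omega^\ell=A_\ell\,\omega^{\ell n}$ and $\db\omega^\ell=B_\ell\,\omega^{\ell\bar n}$ together with the conjugate relations, I expect to obtain
\begin{equation*}
\partial F=i\sum_{j,k}h_{j\bar k}\,(A_j+\bar B_k)\,\omega^{jn\bar k},
\qquad
\partial\db F=i\sum_{j,k}h_{j\bar k}\,(B_j+\bar A_k)(A_j+\bar B_k)\,\omega^{jn\bar n\bar k}.
\end{equation*}

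The decisive point is then algebraic. Since the forms $\{\omega^{jn\bar n\bar k}\}_{1\le j,k\le n-1}$ are linearly independent, $\partial\db F=0$ forces every coefficient to vanish, in particular the diagonal ones; using $A_j+\bar B_j=\overline{B_j+\bar A_j}$ the diagonal coefficient equals $h_{j\bar j}\,|B_j+\bar A_j|^2$, and $h_{j\bar j}>0$ by positivity, so SKT implies $B_j=-\bar A_j$ for every $j$. This relation makes $A_j+\bar B_j=0$, whence the flat diagonal metric $F_0=i\sum_j\omega^{j}\wedge\omega^{\bar j}$ satisfies $\partial F_0=i\sum_j(A_j+\bar B_j)\,\omega^{jn\bar j}=0$, and by reality $dF_0=0$: it is Kähler. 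The crucial observation is that the constraint SKT imposes on the parameters, $B_j=-\bar A_j$, is precisely the one under which the standard metric is Kähler; the only steps requiring care are the justification of symmetrization (standard for unimodular solvmanifolds and conditions linear in $F$) and the sign bookkeeping in the $\partial\db F$ computation, which is the main---though routine---obstacle.
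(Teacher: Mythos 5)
Your proposal is correct and follows essentially the same route as the paper's proof: symmetrization to reduce to invariant metrics, the computation $\partial\db F=\sum_{j,k}h_{j\bar k}(B_j+\bar A_k)(A_j+\bar B_k)\,\omega^{jn\bar n\bar k}$, extraction of the diagonal coefficients $h_{j\bar j}\,|B_j+\bar A_j|^2$ to force $B_j=-\bar A_j$, and the observation that this makes the standard diagonal metric Kähler. The only cosmetic differences are the ordering convention $\omega^{\ell n}$ versus $\omega^{n\ell}$ and your explicit remark on the linear independence of the forms $\omega^{jn\bar n\bar k}$, neither of which changes the argument.
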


\begin{proof}
By the symmetrization process, $X$ admits SKT, Hermitian-symplectic or K\"ahler structure
if and only if the Lie group $G$ admits an invariant SKT, invariant Hermitian-symplectic or invariant K\"ahler structure.
Fix a co-frame $\{\omega^1,\ldots,\omega^{n-1}\}$ of $(1,0)$-forms on $\C^{n-1}$ and a co-frame $\{\omega^n\}$ of $(1,0)$-forms on $\C$, such that
the complex structure equations are of the form
 $$ \left\{\begin{array}{rcl}
            d\omega^j &=& A^j\,\omega^{nj}+B^j\,\omega^{\bar nj}, \quad \quad \text{  } j\in\{1,\ldots,n-1\}, \\
            d\omega^n &=& 0,
           \end{array} \right. $$
 for suitable $A^j,B^j\in\C$. Notice that the Jacobi identity is satisfied for any value of the structure constants,
 while the unimodularity condition corresponds to the requirement
 $$ \sum_{j=1}^{n-1} (A^j+\bar B^j) \;=\; 0 \;. $$

 Consider the general invariant metric on $G$ given by
 $$ F \;:=\; \sum_{h,k=1}^{n} \alpha_{h\bar k}\, \omega^{h\bar k} $$
 where $(\alpha_{h\bar k})_{h,k}$ is a Hermitian matrix with entries in $\C$.
 By noticing that
 $$\partial\bar\partial\omega^{h\bar k}=(B^h+\bar A^k)(A^h+\bar B^k)\omega^{n\bar n h\bar k},
 \quad\quad
 d\omega^{h\bar k}=(A^h+\bar B^k)\omega^{nh\bar k}+(\bar A^k+B^h)\omega^{\bar n h\bar k},$$
 we get
 $$ \partial\bar\partial F \;=\; \sum_{h,k=1}^{n-1} \alpha_{h\bar k}(B^h+\bar A^k)(A^h+\bar B^k)\omega^{n\bar n h\bar k} \;. $$
 So, if $F$ is SKT, then every coefficients must vanish. In particular, for any $j\in\{1,\ldots,n-1\}$,
 $$ |B^j+\bar A^j|^2=0 \;, $$
 since $\alpha_{j\bar j}\neq0$.
But this implies that the diagonal Hermitian structure
$\tilde F :=\frac i2 \sum_{h=1}^{n}  \omega^{h\bar h}$
is K\"ahler, since $2d\tilde F = i\, \sum_{h=1}^{n-1} \left((A^h+\bar B^h)\omega^{n h\bar h} + (\bar A^h + B^h)\omega^{\bar n h \bar h}\right)=0$.
\end{proof}

\subsection{Hermitian structures in dimension \texorpdfstring{$6$}{6}}
Next we consider the case when the (real) dimension of $X$ is 6. As we reminded in the proofs of Propositions~\ref{prop:balanced-N} and~\ref{prop:skt-Cn},
the existence of K\"ahler, Hermitian-symplectic, SKT, balanced and strongly Gauduchon structures is reduced to their existence
at the Lie algebra level, so we will study the spaces of such Hermitian structures on each $\frs_k$, for $1 \leq k \leq 12$.
We also study the existence of 1-Gauduchon structures on the Lie algebras $\frs_k$, although as it is pointed out in \cite{FU},
the symmetrization process does not hold for this kind of Hermitian structures on solvmanifolds, and so
our study covers only the space of \emph{invariant} 1-Gauduchon structures.
The existence results are summarized in Table~\ref{table:summary-metrics}.

A generic Hermitian structure on $\frs_k$ is given, with respect to any coframe $\{\omega^1,\omega^2,\omega^3\}$
of $(1,0)$-forms, by
$$
\begin{pmatrix}        
i r^2&u&z\\
-\bar{u} & i s^2  & v\\              
-\bar{z} & -\bar{v} & i t^2
\end{pmatrix}$$
or equivalently, by the expression
\begin{equation}\label{eq:general-metric}
2F \;=\; ir^2\omega^{1\bar1}+is^2\omega^{2\bar2}+it^2\omega^{3\bar3}+u\omega^{1\bar2}-\bar u\omega^{2\bar1}+v\omega^{2\bar3}-\bar v\omega^{3\bar2}+z\omega^{1\bar3}-\bar z\omega^{3\bar1},
\end{equation}
where $r,s,t\in\R\setminus\{0\}$ and $u,v,z\in\C$ satisfy the conditions that ensure that $F$ is positive-definite:
$r^2s^2>|u|^2$, $s^2t^2>|v|^2$, $r^2t^2>|z|^2$ and
$r^2s^2t^2+2\,\Real(i\bar u \bar v z)>t^2|u|^2+r^2|v|^2+s^2|z|^2$. 



\medskip

Let us consider first the Lie algebra $\frs_1$, which corresponds to the structure equations~\eqref{split_kt} for $\varepsilon=1$,
and for which we can apply Proposition~\ref{prop:balanced-N} because the Lie group $G$ is of the form $\C\ltimes_\varphi KT$.
By \cite[Observation 4.4]{Pop0}, every strongly Gauduchon compact complex surface is K\"ahler,
so in particular the Kodaira-Thurston manifold does not admit strongly Gauduchon structures.
 Hence, by Proposition 2.1, we conclude that 
$\frs_1$ does not admit either strongly Gauduchon or balanced structures.
A direct calculation shows that it does not admit Hermitian-symplectic structures.
However, there always exist SKT and $1$-Gauduchon structures, since for a metric $F$ given by~\eqref{eq:general-metric} we have
$$
2\, \partial \db F = u\, \omega^{13\bar2\bar3} - \bar u\, \omega^{23\bar1\bar3},
\quad\quad
2\, \partial \db F \wedge F= |u|^2\, \omega^{123\bar1\bar2\bar3}.
$$
More precisely, $F$ is SKT if and only if $F$ is $1$-Gauduchon, if and only if $u=0$.

\medskip

The remaining Lie algebras $\frs_k$, $2 \leq k \leq 12$, correspond to the complex structure equations~\eqref{split_C2},
and we can apply Proposition~\ref{prop:skt-Cn} because the Lie group $G$ is of the form $\C\ltimes_\varphi\C^2$.
As a matter of notation, let us denote such complex structures simply as $J=(A, B, \varepsilon) \in \mathbb{C}^2 \times \{0,1\}$.
Given a generic Hermitian structure~\eqref{eq:general-metric},
we first note that
one can always normalize the metric coefficients $r$ and $s$, i.e. we can suppose $r=s=1$. Therefore, we will identify
the Hermitian structures simply by a tuple $F=(t^2, u, v, z)\in \mathbb{R}^+ \times \mathbb{C}^3$, where
$1>|u|^2$, $t^2>|v|^2$, $t^2>|z|^2$ and
$t^2+2\,\Real(i\bar u \bar v z)>t^2|u|^2+|v|^2+|z|^2$,
in order  for $F$ to be positive-definite.

Now, by Proposition \ref{prop:skt-Cn}, there exists a K\"ahler structure if and only if there is a Hermitian-symplectic structure,
if and only if there exists an SKT structure. A direct calculation from~\eqref{split_C2} shows that  the existence of one of these types of structures implies
$$
A + \bar B \;=\; 0,
$$ 
that is, the complex structure must be of the form $J=(A, -\bar A, \varepsilon)$, where $A\in\C$ and $\varepsilon\in\{0,1\}$.
According to the classification given in Section~\ref{subsec:classification-algebras}, the Lie algebras
admitting such a complex structure are $\mathfrak{s}_2,\, \mathfrak{s}_3,\, \mathfrak{s}_7^{\alpha}$.
Indeed,

\vskip.1cm

- if $\varepsilon=0$ then from Table~\ref{tabla2} we get $\mathfrak{s}_2$ (notice that we can take $A=1$ in this case);

\vskip.1cm

- if $\varepsilon=1$ and $A\in\R$, then by Table~\ref{tabla3} the possibilities are
$\mathfrak{s}_2$, $\mathfrak{s}_7^{\alpha}$;

\vskip.1cm

- if $\varepsilon=1$ and $\Imag A\not= 0$, then from Table~\ref{tabla_imA=imBneq0} we get $\mathfrak{s}_3$.


\medskip

Next we give a detailed description of the spaces of K\"ahler structures.

\begin{proposition}\label{casosK}
Let $\frg$ be a $6$-dimensional solvable Lie algebra with a complex structure $J$ of splitting type.
Then, $\frg$ admits a K\"ahler structure if and only if $\frg$ is isomorphic to $\mathfrak{s}_2$, $\mathfrak{s}_3$ or $\mathfrak{s}_7^{\alpha}$, and the K\"ahler structures $(J,F)$ are the following:
\begin{enumerate}
\item[(K.i)] $(\frs_2, J, F)$, where $J=(1,-1,0)$ and $F=(t^2, 0, v, 0)$.\medskip
\item[(K.ii)] $(\frs_3, J, F)$, where $J=(A,-\bar A,1)$, $\Imag A\neq 0$, and $F=(t^2, 0, 0, 0)$.
\medskip
\item[(K.iii)] $(\frs_7^{\alpha}, J, F)$, where
$J=(A,-A,1),\,\, A\in\mathbb R\setminus\{0, -1\}$, and $F=(t^2, 0, 0, 0)$. (Notice that $\alpha = |A|$ or $\alpha=|\frac1A|$.)\medskip
\item[(K.iv)] $(\frs_7^1, J, F)$, where $J=(-1,1,1)$ and $F=(t^2, u, 0, 0)$.\medskip
\end{enumerate}
\end{proposition}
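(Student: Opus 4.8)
The plan is to build directly on the reduction carried out just before the statement. By Proposition~\ref{prop:skt-Cn}, for $G=\C\ltimes_\varphi\C^2$ the existence of a K\"ahler metric is equivalent to that of an SKT one, and this was already shown to force $A+\bar B=0$; moreover the three bullet points preceding the proposition identify $\mathfrak{s}_2$, $\mathfrak{s}_3$ and $\mathfrak{s}_7^{\alpha}$ as the only Lie algebras carrying a complex structure $J=(A,-\bar A,\varepsilon)$. Existence of \emph{some} K\"ahler metric is then immediate, since the diagonal metric $F=(t^2,0,0,0)$ is closed whenever $A+\bar B=0$ (this is exactly the metric $\tilde F$ produced in the proof of Proposition~\ref{prop:skt-Cn}). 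Hence the real content is to describe the full K\"ahler cone for each admissible $J$, and this is what the list (K.i)--(K.iv) records.

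First I would substitute $B=-\bar A$ into~\eqref{split_C2}, giving $d\omega^1=A\,\omega^{13}-\bar A\,\omega^{1\bar3}$, $d\omega^2=\varepsilon\,(-\omega^{23}+\omega^{2\bar3})$ and $d\omega^3=0$, and then compute $dF$ for the normalized generic metric $F=(t^2,u,v,z)$ of~\eqref{eq:general-metric} with $r=s=1$. The key simplification is that the three diagonal terms $\omega^{1\bar1}$, $\omega^{2\bar2}$, $\omega^{3\bar3}$ are all closed: in $d(\omega^{i\bar i})$ the pieces coming from $d\omega^i\wedge\omega^{\bar i}$ and from $\omega^i\wedge d\omega^{\bar i}$ cancel precisely because $A+\bar B=0$ (and trivially for $i=3$, using also that $\varepsilon$ is real). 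Therefore $dF=0$ is equivalent to the vanishing of the coefficients of the off-diagonal three-forms, which yields the system
\begin{equation*}
u\,(A+\varepsilon)=0,\qquad u\,(\bar A+\varepsilon)=0,\qquad v\,\varepsilon=0,\qquad zA=0,
\end{equation*}
together with its complex conjugate, which is equivalent.

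The proof then finishes with a short case analysis on $(\varepsilon,A)$, matching each case to the appropriate $\mathfrak{s}_k$ through Tables~\ref{tabla2},~\ref{tabla3} and~\ref{tabla_imA=imBneq0} and checking positivity in~\eqref{eq:general-metric}. If $\varepsilon=0$ (so $A=1$, algebra $\mathfrak{s}_2$) the system forces $u=z=0$ and leaves $v$ free, giving (K.i). If $\varepsilon=1$ and $\Imag A\neq0$ (algebra $\mathfrak{s}_3$) both $A+\varepsilon$ and $\bar A+\varepsilon$ are nonzero and $A\neq0$, so $u=v=z=0$, giving (K.ii). If $\varepsilon=1$ and $A\in\R\setminus\{0,-1\}$ (algebra $\mathfrak{s}_7^{|A|}$) again $u=v=z=0$, giving (K.iii). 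Finally if $\varepsilon=1$ and $A=-1$ (algebra $\mathfrak{s}_7^{1}$) the factor $A+\varepsilon$ vanishes, so $u$ is free while $v=z=0$, giving (K.iv). In each case the positivity constraints are easily seen to be compatible (e.g.\ $t^2>|v|^2$ in (K.i), $|u|<1$ in (K.iv)), so these families are genuinely nonempty.

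The delicate point I would flag is that the K\"ahler cone is an invariant of the complex structure $J=(A,-\bar A,\varepsilon)$ and not merely of the isomorphism class of $\frg$, so the bookkeeping must be done at the level of $J$. This is sharpest for $\mathfrak{s}_7^{1}$, which is realized both by $J=(1,-1,1)$ (a point of (K.iii), admitting only $F=(t^2,0,0,0)$) and by the inequivalent $J=(-1,1,1)$ (case (K.iv), with the larger family $F=(t^2,u,0,0)$). Conversely one must check that seemingly extra cases collapse under equivalence: for instance the realization $\varepsilon=1$, $A=0$ of $\mathfrak{s}_2$ yields the family $F=(t^2,0,0,z)$, but the coframe change $\omega^1\leftrightarrow\omega^2$, $\omega^3\mapsto-\omega^3$ turns $J=(0,0,1)$ into $J=(1,-1,0)$ and carries this family onto that of (K.i) with $v=-z$. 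Taking such equivalences into account, the list (K.i)--(K.iv) is exhaustive.
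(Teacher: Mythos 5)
Your proof is correct and follows essentially the same route as the paper's: both work with $B=-\bar A$ (forced by the SKT reduction of Proposition~\ref{prop:skt-Cn}), compute the differential of the generic normalized metric (the paper computes $\bar\partial F$, which is equivalent to $dF$ by reality of $F$), arrive at the same vanishing system $u(A+\varepsilon)=0$, $v\,\varepsilon=0$, $zA=0$, and run the identical case analysis on $(\varepsilon,A)$. Your explicit verification that the case $\varepsilon=1$, $A=0$ collapses onto (K.i) under the coframe swap $\omega^1\leftrightarrow\omega^2$, $\omega^3\mapsto-\omega^3$ is a welcome detail which the paper only asserts.
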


\begin{proof}
A direct computation shows that
\begin{equation}\label{delta_bar_F}
2\,\bar\partial F \;=\; (\bar A + \varepsilon)\,(u\,\omega^{1\bar2\bar3} + \bar u\,\omega^{2\bar1\bar3}) -\varepsilon \bar v\,\omega^{3\bar2\bar3} + \bar A\bar z\,\omega^{3\bar1\bar3} \;.
\end{equation}

Hence the conditions to be satisfied for $F$ being K\"ahler are
\begin{equation*}
u(A+\varepsilon)\;=\;0\;,\qquad \varepsilon v\;=\;0\;,\qquad Az \;=\; 0\;.
\end{equation*}

If $\varepsilon = 0$, then we may assume that $A=1$ (see the proof of Proposition~\ref{prop2} for details) and therefore $u=z=0$.
The K\"ahler structures are then given by $(t^2, 0, v, 0)$ and we obtain case {(K.i)}.

If $\varepsilon=1$, then $v=0$ and several cases appear:
\begin{itemize}
\item If $A=0$, it is equivalent to the previous case {(K.i)}.
\item If $A=-1$, then $z=0$. So, $J=(-1, 1, 1)$ and $F = (t^2, u, 0, 0)$, which corresponds to {(K.iv)}.
\item If $A\neq 0$ and $A\neq-1$,
then $u=v=z=0$. Depending on the values of $A$ (see Tables~\ref{tabla3} and~\ref{tabla_imA=imBneq0}),
we get the remaining cases {(K.ii)} or {(K.iii)}.\qedhere
\end{itemize}
\end{proof}

\begin{remark}
{\rm In \cite[Example 4]{hasegawa},  it is shown that the complex structures corresponding to
cases (K.i) and (K.iv) in Proposition~\ref{casosK} admit K\"ahler metrics.
In the recent paper \cite{Andriot} it is shown that $\frs_3$ admits a K\"ahler structure and, moreover,
solvmanifolds constructed from the Lie algebra $\frs_7^1$ give rise to
new supersymmetric vacua. Notice that $\frs_2, \frs_3$ and $\frs_7^{\alpha}$ are the only (non abelian)
solvable Lie algebras in six dimensions admitting Ricci flat metrics (see \cite{Andriot} and the
references therein).  By Proposition~\ref{casosK} all these Lie algebras admit a K\"ahler structure,
although by \cite{fino-otal-ugarte} only $\frs_7^1$ with $J=(1,-1,1)$ admits
a Calabi-Yau structure.}
\end{remark}

In the following proposition we compare the spaces of Hermitian-symplectic, SKT and 1-Gauduchon structures
with the space of K\"ahler structures.

\begin{proposition}\label{casosSKT}
Let $\frg$ be a $6$-dimensional solvable Lie algebra with a complex structure~$J$ of splitting type
that admits K\"ahler structures.
Any Hermitian structure $(J,F)$ on $\frg$ is 1-Gauduchon if and only if it is Hermitian-symplectic, if and only if it is SKT.
Moreover, any SKT structure $(J,F)$ on $\frg$ is one of the following:
\begin{enumerate}
\item[(SKT.i)] $(\frs_2, J, F)$, where $J=(1,-1,0)$ and $F=(t^2, 0, v, z)$.\medskip
\item[(SKT.ii)] $(\frs_3, J, F)$, where $J=(A,-\bar A,1)$, $\Imag A\neq 0$, and $F=(t^2, 0, v, z)$.\medskip
\item[(SKT.iii)] $(\frs_7^{\alpha}, J, F)$, where
$J=(A,-A,1)$, $A\in\mathbb R\setminus\{0, -1\}$, and $F=(t^2, 0, v, z)$.\medskip
\item[(SKT.iv)] $(\frs_7^1, J, F)$, where $J=(-1,1,1)$ and $F=(t^2, u, v, z)$.
\medskip
\end{enumerate}
\end{proposition}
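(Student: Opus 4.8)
The plan is to reduce the whole statement to a single computation of $\partial\bar\partial F$. Since $\frg$ is assumed to admit a K\"ahler structure, Proposition~\ref{casosK} tells us that $(\frg,J)$ is one of the four pairs (K.i)--(K.iv), and in each of them the complex structure satisfies $B=-\bar A$, so that the equations~\eqref{split_C2} reduce to $d\omega^1=A\,\omega^{13}-\bar A\,\omega^{1\bar3}$, $d\omega^2=-\varepsilon\,\omega^{23}+\varepsilon\,\omega^{2\bar3}$, $d\omega^3=0$. Starting from the expression~\eqref{delta_bar_F} for $2\bar\partial F$, I would apply $\partial$ term by term, using $\partial\omega^1=A\,\omega^{13}$, $\partial\omega^2=-\varepsilon\,\omega^{23}$, $\partial\omega^{\bar1}=-A\,\omega^{\bar13}$, $\partial\omega^{\bar2}=\varepsilon\,\omega^{\bar23}$ and $\partial\omega^3=\partial\omega^{\bar3}=0$. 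The terms carrying $\bar v$ and $\bar z$ vanish because they force a repeated $\omega^3$, and the surviving computation collapses to
\begin{equation*}
2\,\partial\bar\partial F \;=\; |A+\varepsilon|^{2}\,\bigl(u\,\omega^{13\bar2\bar3}-\bar u\,\omega^{23\bar1\bar3}\bigr).
\end{equation*}

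From this formula two of the three equivalences are immediate. The SKT condition $\partial\bar\partial F=0$ is equivalent to $|A+\varepsilon|^{2}\,u=0$, i.e.\ to $u=0$ unless $A=-\varepsilon$; wedging the displayed $(2,2)$-form with $F$ and collecting the unique $(3,3)$-contribution (from the $-\bar u\,\omega^{2\bar1}$ and $u\,\omega^{1\bar2}$ terms of $F$) gives $\partial\bar\partial F\wedge F$ proportional to $|A+\varepsilon|^{2}\,|u|^{2}\,\omega^{123\bar1\bar2\bar3}$, so the $1$-Gauduchon condition is governed by exactly the same vanishing, proving SKT $\Leftrightarrow$ $1$-Gauduchon. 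Reading off the four cases then yields the lists (SKT.i)--(SKT.iv): in (K.i)--(K.iii) one has $A+\varepsilon\neq 0$, which forces $u=0$ and leaves $v,z$ free, while in (K.iv), where $A=-1$ and $\varepsilon=1$, the factor $|A+\varepsilon|^{2}$ vanishes and all of $u,v,z$ are admissible.

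It remains to insert the Hermitian-symplectic condition between K\"ahler and SKT. Since the general implications already give Hermitian-symplectic $\Rightarrow$ SKT, I only need the converse for these four families, which will close the chain. Writing a candidate $d$-closed real $2$-form as $\Omega=\Gamma+F+\bar\Gamma$ with $\Gamma$ of type $(2,0)$ and decomposing $d\Omega=0$ into pure types, the problem reduces to finding a $(2,0)$-form $\Gamma$ with $\partial\Gamma=0$ and $\bar\partial\Gamma=-\partial F$. I would compute the action of $\partial$ and $\bar\partial$ on the basis $\{\omega^{12},\omega^{13},\omega^{23}\}$, obtaining $\bar\partial\omega^{12}=(\bar A-\varepsilon)\,\omega^{12\bar3}$, $\bar\partial\omega^{13}=\bar A\,\omega^{13\bar3}$, $\bar\partial\omega^{23}=-\varepsilon\,\omega^{23\bar3}$, together with $\partial\omega^{12}=(\varepsilon-A)\,\omega^{123}$ and $\partial\omega^{13}=\partial\omega^{23}=0$, and then solve the resulting linear system for each SKT structure. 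Writing $\Gamma=a\,\omega^{12}+b\,\omega^{13}+c\,\omega^{23}$, matching coefficients against $-\partial F$ forces $a=0$ (consistent with $\partial\Gamma=0$) and determines $b$ from $z$ and $c$ from $v$ in every one of the four cases, so each SKT structure is indeed Hermitian-symplectic; combined with the previous paragraph this gives the asserted equivalence of all three notions.

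The genuinely delicate point I expect is this last step: one must keep careful track of the signs produced by reordering the degree-$3$ and degree-$6$ wedge monomials, and then verify that the $3\times 3$ linear system for the components of $\Gamma$ is solvable in each case (in particular that the constraint coming from $\partial\Gamma=0$ is compatible with the value of $a$ forced by the $\omega^{12\bar3}$-component). The remaining computations—the explicit form of $\partial\bar\partial F$ and of the wedge product $\partial\bar\partial F\wedge F$—are routine once the differentials of the $(1,0)$-forms are recorded.
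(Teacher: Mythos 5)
Your proposal is correct and follows essentially the same route as the paper: both start from~\eqref{delta_bar_F}, compute $2\,\partial\bar\partial F=|A+\varepsilon|^2\,(u\,\omega^{13\bar2\bar3}-\bar u\,\omega^{23\bar1\bar3})$ and $2\,\partial\bar\partial F\wedge F=|u|^2|A+\varepsilon|^2\,\omega^{123\bar1\bar2\bar3}$, so that SKT and $1$-Gauduchon both reduce to $u(A+\varepsilon)=0$, and then obtain the Hermitian-symplectic property from the SKT condition by solving a small linear system (the paper phrases this with a $(0,2)$-form $\beta$ satisfying $\bar\partial F=\partial\beta$, $\bar\partial\beta=0$, which is exactly the conjugate of your $(2,0)$-form $\Gamma$). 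Your final case split organized by the K\"ahler families (K.i)--(K.iv) coincides with the paper's split by the vanishing of $u$, so the two arguments differ only cosmetically.
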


\begin{proof}
Using \eqref{delta_bar_F}, we have
$$2\,\partial \bar\partial F = |A + \varepsilon|^2 (u\,\omega^{13\bar2\bar3} -\bar u\, \omega^{23\bar1\bar3}),
\quad\quad
2\, \partial \db F \wedge F= |u|^2\, |A + \varepsilon|^2\, \omega^{123\bar1\bar2\bar3}.$$
Therefore, the SKT condition is equivalent to the 1-Gauduchon condition, and they are equivalent to $u(A+\varepsilon) \;=\; 0.$

On the other hand, the structure $F$ is Hermitian-symplectic if
$$
\bar\partial F \;=\; \partial \beta,\quad \bar\partial\beta=0,\quad\text{where  }\quad \beta\in\frg^{0,2}\;.
$$
Since $\partial\beta \in\langle A\,\omega^{3\bar1\bar3},\, \varepsilon\,\omega^{3\bar2\bar3}\rangle$,
it follows from~\eqref{delta_bar_F} that
$F$ is Hermitian-symplectic if and only if there exist $\lambda,\, \mu\in\mathbb C$ satisfying
$$
u(A+\varepsilon) \;=\; 0\;,\quad v\,\varepsilon \;=\; \lambda\,\varepsilon\;,\qquad z\,A \;=\; \mu\, \bar A.
$$
It is always possible to find $\lambda,\mu$ satisfying the last two equations.
The first one is precisely the SKT condition.

Now, depending on the vanishing of the metric coefficient $u$, the possibilities for
a Hermitian structure $(J, F)$ to satisfy the SKT condition are:
\begin{itemize}
\item $u\not=0$. Then, $\varepsilon =1$ and $A= -1$, which corresponds to the case {(SKT.iv)}.
\item $u=0$. If $\varepsilon=0$, then we can suppose $A=1$, which leads to the case {(SKT.i)}.
The remaining cases {(SKT.ii)} and {(SKT.iii)}  are obtained when $\varepsilon =1$.
\end{itemize}
\end{proof}

\begin{remark}
{\rm A complex structure $J$ as above admits SKT structures if and only if it admits K\"ahler ones,
however, for any fixed $J$, there exist SKT structures which are not K\"ahler.
Indeed, by Propositions~\ref{casosK} and~\ref{casosSKT}, any SKT structure with metric coefficient $z\neq 0$ is not K\"ahler.
Similarly, there exist Hermitian-symplectic structures and 1-Gauduchon structures which are not K\"ahler.}
\end{remark}

Now, with respect to balanced and strongly Gauduchon Hermitian structures,
we can apply Proposition \ref{prop:balanced-N} for $N=\C^2$ and so any complex structure corresponding to the equations~\eqref{split_C2}
admits balanced structures. Indeed, for any value of the tuple $(A, B, \varepsilon)\in \mathbb{C}^2 \times \{0,1\}$, the Hermitian structures
given by $(t^2, u, 0,0)$ are balanced. Notice that there exist strongly Gauduchon Hermitian structures that are not balanced,
for instance, consider a complex structure $J=(A, B, 1)$, i.e. with $\varepsilon=1$, and a Hermitian structure $F$
given by $(t^2, 0, v, z)$ with $v\neq 0$.

\medskip

We summarize all the results about Hermitian structures in Table~\ref{table:summary-metrics}.
Here, the symbol ``$\checkmark$'' means that the corresponding kind of Hermitian metrics exists \emph{for any complex structure of splitting type}
on the Lie algebra (see Tables~\ref{tabla2}--\ref{tabla6}),
whereas ``$-$'' means that none of the complex structures admits such kind of metrics. Here ``H-symplectic'' means Hermitian-symplectic
and ``sG'' refers to strongly Gauduchon metrics.

\begin{table}[!ht]
\renewcommand{\arraystretch}{1.2}
\begin{tabular}{|c|c|c|c|c|c|c|}
\hline
& K\"ahler & H-symplectic & SKT & invariant 1-G & balanced & sG\\
\hline
$\frs_1$ & $-$ & $-$ & $\checkmark$ & $\checkmark$ & $-$ & $-$ \\
\hline
$\frs_2$ & $\checkmark$ & $\checkmark$ & $\checkmark$ & $\checkmark$ & $\checkmark$ & $\checkmark$ \\
\hline
$\frs_3$ & $\checkmark$ & $\checkmark$ & $\checkmark$ & $\checkmark$ & $\checkmark$ & $\checkmark$ \\
\hline
$\frs_4$ & $-$ & $-$ & $-$ & $-$ & $\checkmark$ & $\checkmark$ \\
\hline
$\frs_5^{\alpha}$ & $-$ & $-$ & $-$ & $-$ & $\checkmark$ & $\checkmark$ \\
\hline
$\frs_6^{\alpha,\beta}$ & $-$ & $-$ & $-$ & $-$ & $\checkmark$ & $\checkmark$ \\
\hline
$\frs_7^{\alpha}$ & $\checkmark$ & $\checkmark$ & $\checkmark$ & $\checkmark$ & $\checkmark$ & $\checkmark$ \\
\hline
$\frs_8^{\alpha}$ & $-$ & $-$ & $-$ & $-$ & $\checkmark$ & $\checkmark$ \\
\hline
$\frs_{9}$ & $-$ & $-$ & $-$ & $-$ & $\checkmark$ & $\checkmark$ \\
\hline
$\frs_{10}^{\alpha, \beta}$ & $-$ & $-$ & $-$ & $-$ & $\checkmark$ & $\checkmark$ \\
\hline
$\frs_{11}^{\alpha}$ & $-$ & $-$ & $-$ & $-$ & $\checkmark$ & $\checkmark$ \\
\hline
$\frs_{12}$ & $-$ & $-$ & $-$ & $-$ & $\checkmark$ & $\checkmark$ \\
\hline
\end{tabular}
\medskip
\caption{Existence of Hermitian metrics for any complex structure of splitting type.}
\label{table:summary-metrics}
\end{table}

\begin{remark}
{\rm
Note that the Lie algebra $\mathfrak s_1=\frg_{4,9}^0\oplus \R^2$
(see Remark~\ref{remark-comparacion}) admits SKT Hermitian structures because the 4-dimensional
Lie algebra $\frg_{4,9}^0$ admit them by \cite{madsen-swann}, and so
the product complex structure on $\mathfrak s_1$ admits SKT structures.
However, the Hermitian structures that we have obtained on $\mathfrak{s}_1$
are different because the splitting-type complex structure is not a product,
and in this sense, our study above provides a new example of SKT metrics in dimension 6.

Finally, we notice also that our results provide (up to our knowledge) new families of non-K\"ahler balanced solvmanifolds
(see also Remark~\ref{remark-lattices}). The $\frs_{12}$ case is especially rich, as Section~\ref{sec:nakamura} below shows.
}
\end{remark}

\bigskip

In relation to the conjectures in \cite{fino-vezzoni} and in \cite{Pop-2015} mentioned above, as a consequence
of the results of this section one has the following result.

\begin{corollary}\label{cor-consec}
Let $X=G/\Gamma$ be a $6$-dimensional solvmanifold endowed with a complex structure of splitting type.
We have:
\begin{enumerate}
\item[(i)] If $X$ has an SKT metric and also a balanced metric, then $X$ is K\"ahler.
\item[(ii)] If $X$ satisfies the $\partial\db$-Lemma, then $X$ is balanced.
\end{enumerate}
\end{corollary}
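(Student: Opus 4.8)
The plan is to deduce both statements from the Lie-algebra classification together with the reduction results already proved. By the symmetrization process, $X=G/\Gamma$ carries an invariant Hermitian metric of a prescribed type if and only if $\frg$ does, and by Theorem~\ref{prop:eq-struttura} there are exactly two cases to consider: $G=\C\ltimes_\varphi KT$, where $\frg\cong\frs_1$ by Proposition~\ref{prop_KT}, and $G=\C\ltimes_\varphi\C^2$, where $\frg\cong\frs_k$ for some $2\le k\le 12$. I would run both parts of the corollary through this dichotomy, using Propositions~\ref{prop:balanced-N} and~\ref{prop:skt-Cn} as the two main tools; the needed conclusions are already visible in Table~\ref{table:summary-metrics}.

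For part (i), in the case $\frg\cong\frs_1$ one invokes Proposition~\ref{prop:balanced-N}: since the Kodaira--Thurston manifold (a non-Kähler compact complex surface) is not Kähler and every strongly Gauduchon compact surface is Kähler, $X$ admits no balanced metric whatsoever. Thus the hypothesis of (i) cannot hold here, and the implication is vacuously true. In the case $G=\C\ltimes_\varphi\C^2$, Proposition~\ref{prop:skt-Cn} gives that, for the \emph{same} complex structure $J$, the existence of an SKT metric is equivalent to the existence of a Kähler one; hence the SKT hypothesis alone already forces $X$ to be Kähler. In both cases (i) follows.

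For part (ii), I would first note that when $G=\C\ltimes_\varphi\C^2$, Proposition~\ref{prop:balanced-N} applied with $N=\C^2$ produces a balanced metric unconditionally (concretely, the Hermitian structures $(t^2,u,0,0)$ are balanced for every $(A,B,\varepsilon)$), so such an $X$ is \emph{always} balanced, whether or not it satisfies the $\partial\db$-Lemma. It then remains only to rule out $\frg\cong\frs_1$: here one uses the fact (recorded above) that solvmanifolds corresponding to $\frs_1$ do not satisfy the $\partial\db$-Lemma, so any splitting-type $X$ satisfying the $\partial\db$-Lemma is forced into the $\C\ltimes_\varphi\C^2$ case, and is therefore balanced.

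The only genuinely nontrivial ingredient is the failure of the $\partial\db$-Lemma for $\frs_1$, which is not a metric property and cannot be read off Table~\ref{table:summary-metrics}; without it (ii) would actually be false, since $\frs_1$ is SKT but not balanced. The expected mechanism is that, in the equations~\eqref{split_kt} with $\varepsilon=1$, the $(1,1)$-form $\omega^{1\bar1}=d\omega^2$ is $d$-exact but not $\partial\db$-exact, and establishing the non-$\partial\db$-exactness requires comparing the cohomology of the solvmanifold with that of invariant forms via Kasuya's splitting-type technique. A minor point to handle carefully is the vacuous nature of the $\frs_1$ case in (i): one should emphasize that ``SKT and balanced'' simply cannot both hold on $\frs_1$, rather than attempt to exhibit a Kähler metric there.
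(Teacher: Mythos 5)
Your reduction and case split are exactly the paper's: symmetrization takes everything to the Lie-algebra level, and the dichotomy $\frg\cong\frs_1$ (i.e.\ $G=\C\ltimes_\varphi KT$) versus $G=\C\ltimes_\varphi\C^2$ or $\C^2\ltimes_\varphi \C$ drives both parts. Part (i) as you argue it is correct and is essentially the paper's proof: the paper reads off Table~\ref{table:summary-metrics} that SKT plus balanced forces $\frg$ to be isomorphic to $\frs_2$, $\frs_3$ or $\frs_7^{\alpha}$, each of which admits K\"ahler structures, whereas you invoke Proposition~\ref{prop:skt-Cn} directly, so that in the $\C\ltimes_\varphi\C^2$ case the SKT hypothesis alone produces a K\"ahler metric for the same complex structure and the balanced hypothesis serves only to make the $\frs_1$ case vacuous (via Proposition~\ref{prop:balanced-N} and the non-existence of balanced metrics on the Kodaira--Thurston surface). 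Since the table is distilled precisely from Propositions~\ref{prop:balanced-N} and~\ref{prop:skt-Cn}, this is a difference of bookkeeping, not of substance; the same goes for the first half of (ii), the unconditional existence of balanced metrics when $G=\C\ltimes_\varphi\C^2$.

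The genuine shortfall is the step you yourself flag and defer: showing that no solvmanifold attached to $\frs_1$ satisfies the $\partial\db$-Lemma. You identify the correct witness, the form $\omega^{1\bar1}=d\omega^2$ coming from~\eqref{split_kt} with $\varepsilon=1$, but you assert that proving its non-$\partial\db$-exactness ``requires comparing the cohomology of the solvmanifold with that of invariant forms via Kasuya's splitting-type technique,'' and you do not carry this out. No such machinery is needed: the paper closes this step with the same symmetrization process already used for metrics. Because $J$ is invariant, the averaging map commutes with $\partial$ and $\db$ and fixes invariant forms; so if $\omega^{1\bar1}=\partial\db f$ for some smooth function $f$ on $X$, then averaging $f$ yields an invariant function, hence a constant $c$, and $\omega^{1\bar1}=\partial\db c=0$, a contradiction. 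Since $\omega^{1\bar1}=\db\omega^2=\partial(-\omega^{\bar 2})$ is $d$-closed, of pure type, and $d$-, $\partial$- and $\db$-exact, this single form already violates the $\partial\db$-Lemma. Your proposed route through the complexes $B^{\bullet,\bullet}_{\Gamma}$ and $C^{\bullet,\bullet}_{\Gamma}$ would in principle also work (condition (5) of Definition~\ref{def-splitting-type} does hold for the Kodaira--Thurston factor), but it is far heavier than necessary; as written, your proof of (ii) is an outline whose decisive non-exactness claim is left unproved.
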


\begin{proof}
If $X$ has an SKT metric and also a balanced metric, then by symmetrization, there is an SKT structure
and also a balanced structure on the Lie algebra $\frg$ underlying $X$. Now, by Table~\ref{table:summary-metrics},
the Lie algebra is isomorphic to $\frs_2$, $\frs_3$ or $\frs_7^{\alpha}$. In any case, there is a
K\"ahler structure on $\frg$ and so $X$ is K\"ahler, which completes the proof of (i).

For the proof of (ii), in view of Table~\ref{table:summary-metrics} it is enough to prove that for any lattice
$\Gamma$ on the connected and simply-connected Lie group $G_1$ corresponding to $\frs_1$, the solvmanifold
$G_1/\Gamma$ does not satisfy the $\partial\db$-Lemma with respect to any complex structure of splitting type $J$.
In addition, by the symmetrization process, it suffices to check that the $\partial\db$-Lemma is not satisfied at
the Lie algebra level. Now, for any complex structure of splitting type $J$ we have a basis $\{\omega^1,\omega^2,\omega^3\}$
of (1,0)-forms satisfying~\eqref{split_kt} with $\varepsilon=1$, therefore the (1,1)-form
$$
\omega^{1\bar1}=d\omega^2=\partial(-\omega^{\bar2})=\db \omega^2
$$
is $d$-exact, $\partial$-exact and $\db$-exact, but it is not $\partial\db$-exact.
\end{proof}


\section{Complex structures on the Nakamura manifold}\label{sec:nakamura}

\noindent
In this section we focus on the complex geometry of splitting type on the Nakamura manifold \cite{nakamura},
whose underlying Lie algebra is $\mathfrak{s}_{12}$.
Firstly, we classify the complex structures of splitting type, which allows us
to produce analytic families of complex solvmanifolds with holomorphically trivial canonical bundle
satisfying interesting properties in relation to the
$\partial\bar\partial$-Lemma.

\subsection{Moduli of complex structures of splitting type on the Nakamura manifold}


Next we study the space of complex structures of splitting type on the Lie algebra $\mathfrak{s}_{12}$ up to equivalence.

\begin{proposition}\label{prop:moduli_Nk}
On the Lie algebra $\frs_{12}$, there exist the following non-equivalent complex structures of splitting type:
\begin{itemize}
\item[(i)] $(\frs_{12}, \tilde{J}):$  $d\omega^1 = -\omega^{13},\quad d\omega^{2} = \omega^{23},\quad d\omega^3 =0$;
\medskip
\item[(ii)] $(\frs_{12}, J_A) :$ $
 \begin{cases}
\begin{array}{l}
d\omega^1 = A\,\omega^{13}-\omega^{1\bar 3},\\
d\omega^{2} = -A\,\omega^{23}+\omega^{2\bar3},\quad A \in \C, \quad |A|\neq 1,\\
d\omega^3 =0;
\end{array}
\end{cases}
$
\medskip
\item[(iii)] $(\frs_{12}, J_B):$  $\begin{cases}
\begin{array}{l}
d\omega^1 = -\omega^{13}+B\,\omega^{1\bar 3},\\
d\omega^{2} = -\bar B\,\omega^{23}+\omega^{2\bar3},\quad B \in \C, \quad |B|<1,\\
d\omega^3 =0.
\end{array}
\end{cases}$
\end{itemize}
\end{proposition}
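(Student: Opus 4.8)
The plan is to begin from the reduced equations~\eqref{split_C2}, since by Theorem~\ref{prop:eq-struttura} every splitting-type complex structure on $\frs_{12}$ can be written, in a suitable $(1,0)$-coframe $\{\omega^1,\omega^2,\omega^3\}$, as a triple $(A,B,\varepsilon)\in\C^2\times\{0,1\}$ whose underlying real Lie algebra is $\frs_{12}$. My first step is to extract from the classification of Section~\ref{subsec:classification-algebras}---that is, from Proposition~\ref{prop2}, Lemmas~\ref{lema imA=imB=0}, \ref{lema imA=imB no0}, \ref{caso_dep} and~\ref{caso_indep}, and Tables~\ref{tabla2}--\ref{tabla6}---precisely which triples produce $\frs_{12}$. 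Collecting those rows, one finds that $\frs_{12}$ arises exactly in the parallelizable situation $\varepsilon=0$ and, for $\varepsilon=1$, in the two families $B=-1$ with $|A|\neq1$ and $A=-1$ with $|B|\neq1$; the excluded values $|A|=1$, respectively $|B|=1$, give $\frs_4$, $\frs_7^{\alpha}$ or $\frs_8^{\alpha}$ instead.

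The key organizing tool is a coframe-independent invariant: the unordered pair of characters $\alpha_1,\alpha_2\colon\C\to\C^*$ defining the action $\varphi$, equivalently the two exponent vectors $(a_j,b_j)$ in $d\omega^j=a_j\,\omega^{j3}+b_j\,\omega^{j\bar3}$. This pair is intrinsic to $J$ modulo the two coframe changes that fix the $(1,0)$-space, hence fix $J$ itself: the relabelling $\omega^1\leftrightarrow\omega^2$, which swaps $(a_1,b_1)\leftrightarrow(a_2,b_2)$, and the rescaling $\omega^3\mapsto c\,\omega^3$ with $c\in\C^*$, which sends $(a_j,b_j)\mapsto(a_j/c,\,b_j/\bar c)$. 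I will split the analysis by the nature of this pair, using as discriminants whether the structure is parallelizable ($b_1=b_2=0$) and whether $\det\varphi=\alpha_1\alpha_2$ is trivial; note that by unimodularity the latter is equivalent to the single relation $a_1+a_2=0$.

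The third step is the normalization inside each regime. When $b_1=b_2=0$, unimodularity forces $a_2=-a_1\neq0$, and rescaling $\omega^3$ brings the pair to $\{1,-1\}$, yielding the complex-parallelizable structure $\tilde J$ of case~(i). When $b_1,b_2$ are not both zero and $\alpha_1\alpha_2=1$, after swap and rescaling the exponent vectors become $(A,-1)$ and $(-A,1)$, which is exactly $J_A$; here the swap acts trivially on $A$, so $A$ is a genuine modulus constrained only by $|A|\neq1$, giving case~(ii). When $\alpha_1\alpha_2\neq1$ one normalizes to $A=-1$ and reads off $B$ from the remaining exponent, obtaining $J_B$; the decisive observation is that $\omega^1\leftrightarrow\omega^2$ followed by $\omega^3\mapsto\bar B\,\omega^3$ carries $J_B$ to $J_{1/B}$ while leaving $J$ unchanged, so one may restrict to the fundamental domain $|B|<1$, which is case~(iii). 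The three families are then pairwise inequivalent: (i) is the only complex-parallelizable one, and (ii) is separated from (iii) by the invariant $\alpha_1\alpha_2$, trivial precisely in~(ii).

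The main obstacle I anticipate lies in the bookkeeping of the first and third steps. A single complex structure reappears under several triples and under two different values of $\varepsilon$: for instance $(A,B,\varepsilon)=(0,-1,0)$, which Table~\ref{tabla2} lists under $\frs_{12}$ with $\varepsilon=0$, is not $\tilde J$ but rather $J_B$ with $B=0$, as one sees after the coframe change above. The delicate part is therefore to prove that the two operations (swap and $\C$-rescaling) exhaust all identifications---equivalently, that the pair of characters, taken up to swap and rescaling, is a complete invariant of $J$ on $\frs_{12}$---so that no hidden automorphism of $\frs_{12}$ merges distinct values of $A$, or distinct values of $B$ inside $|B|<1$, or any two of the three families. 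Establishing this completeness, rather than merely producing the normal forms, is where the real work of the proof will be.
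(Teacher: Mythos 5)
Your derivation of the three normal forms is correct and follows essentially the same route as the paper: collect from Tables~\ref{tabla2}, \ref{tabla3} and~\ref{tabla6} the triples $(A,B,\varepsilon)$ whose underlying algebra is $\frs_{12}$, then normalize using the coframe swap $\omega^1\leftrightarrow\omega^2$ and the rescaling $\omega^3\mapsto c\,\omega^3$. In particular, your identification of $(A,B,\varepsilon)=(0,-1,0)$ with $J_{B=0}$ and the reduction $J_B\simeq J_{1/B}$ forcing $|B|<1$ are exactly the paper's observations.

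The genuine gap is in the non-equivalence half, and you name it yourself. Equivalence here means an arbitrary automorphism $F$ of $\frs_{12}$ with $J=F^{-1}\circ J'\circ F$, and such an $F$ need not respect your chosen presentation as $\C\ltimes_\varphi\C^2$; consequently your separating invariant ``$\alpha_1\alpha_2$ trivial'' is only defined through that presentation, and using it to distinguish (ii) from (iii) is circular until you prove that swap and rescaling exhaust all identifications. Since the proposition's content is precisely the pairwise non-equivalence, deferring this step (``where the real work of the proof will be'') means the statement is not proved. The paper sidesteps part of the circularity with a manifestly automorphism-invariant quantity: $\dim H^{3,0}_{\db}(\frg)$, which equals $1$ for (i) and (ii) (there $d\omega^{123}=0$) and $0$ for (iii) (there $d\omega^{123}=(1+B)\,\omega^{123\bar3}\neq 0$ because $|B|<1$ excludes $B=-1$), and then settles the remaining cases ((i) versus (ii), and distinct parameters within (ii), respectively (iii)) by a direct computation with automorphisms. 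Your complex-parallelizability criterion for isolating (i) is legitimate, since that property is frame-independent; but to close the within-family cases you need the missing completeness argument. One way to supply it: any automorphism of $\frs_{12}$ preserves the nilradical $\langle e_1,\dots,e_4\rangle$, hence acts on its annihilator $\langle e^5,e^6\rangle$ (so $F^*\omega'^3=c\,\omega^3$ for some $c\in\C^*$) and must permute the two weight lines of the induced action on $\mathfrak{n}\otimes_\R\C$; this is exactly the statement that every equivalence acts on the character data by swap plus rescaling, which is what your argument tacitly assumes.
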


\begin{proof}
Here the equivalence between the complex structures is in the usual sense:
two complex structures $J$
and $J'$ on a Lie algebra $\frg$ are equivalent if there exists an automorphism
$F\colon \frg\longrightarrow\frg$ such that
$J=F^{-1}\circ J'\circ F$.
We first observe the following property of the complex structures
defined by equations~\eqref{split_C2} with $A=-1$ and $\varepsilon=1$: if
we denote by $J_B$ such a complex structure, then, for $B \not=0$, $J_B$ is equivalent to $J_{1/B}$
(indeed, it suffices to multiply $\omega^3$ by $\bar B$, and change $\omega^1$ with $\omega^2$).
This property explains the condition $|B|<1$ in the equations (iii) above.

Now, according to our classification in Section~\ref{sec:classification} of complex structures of splitting type,
the Lie algebra $\frs_{12}$ appears only in some specific cases in the Tables~\ref{tabla2},
\ref{tabla3} and~\ref{tabla6}.
First, from Table~\ref{tabla2}, in the case $(A,B)=(-1,0)$ we obtain equations (i),
and in the case $(A,B)=(0,-1)$ equations (iii) for $B=0$, just considering a new basis $\{\tau^1 = \omega^2,\, \tau^2 = \omega^1,\, \tau^3 = -\omega^3\}.$
The case $B=-1$, $A\in \mathbb R-\{\pm1\}$, of Table~\ref{tabla3} lies in equations (ii),
whereas the case $A=-1$, $B\in \mathbb R-\{\pm1\}$, of Table~\ref{tabla3} lies in the equations (iii).

With respect to Table~\ref{tabla6}, the complex structures on $\frs_{12}$ satisfy the conditions
$$
\Imag A \neq \Imag B,\quad\quad \Imag A (1+\Real B) = - \Imag B (1+\Real A),\quad\quad \Delta =\pm (|B|^2-|A|^2)\neq 0,
$$
where $\Delta = |A|^2 + |B|^2 + 2(\Real A + \Real B + \Real A\, \Real B - \Imag A\, \Imag B)$.
Next we study the solutions of this set of equations:

- If $\Imag A=0$, since $\Imag B\neq 0$, then $A=-1$ and $\Delta = |B|^2-1\neq 0$.  In this case, the structures belong to case (iii).
Similarly, if $\Imag B=0$ then $B=-1$ and $\Delta = 1-|A|^2\neq 0$, so we are in case (ii).

- Suppose now that $(\Imag A)(\Imag B)\neq 0$ and $\Real A = \Real B = -1$.  It is straightforward to verify that
$\Delta = (\Imag A - \Imag B)^2$. But $\Delta = \pm(|B|^2-|A|^2) = \pm(\Imag^2B - \Imag^2A)$ implies
$\Imag A = \Imag B$, which is a contradiction to $\Delta \neq 0$.

- Finally, if $(\Imag A)(\Imag B)\neq 0$ and $(1+\Real A)(1+\Real B)\neq 0$, then we can take
$\Imag B = -\Imag A \left(\frac{1+\Real B}{1+\Real A}\right)$. Now, the condition $\Delta = |B|^2-|A|^2$ is equivalent to
$B= -A \left(\frac{1+\bar A}{1+A}\right)$, which implies $|B| = |A|$.
The case $\Delta = -(|B|^2-|A|^2)$ is similar. In conclusion, we do not get
complex structures in these cases.

\medskip

Let us study now the equivalences of complex structures.
Observe first that all the complex structures in the cases (i) and (ii)
satisfy $\text{dim }H^{3,0}_{\bar\partial} (\frg) = 1$, but $\text{dim }H^{3,0}_{\bar\partial} (\frg)=0$ for
the complex structures in case (iii).
Therefore, there are no equivalences between the case (iii) and cases (i)-(ii).
A direct calculation allows to show that the complex structure (i) is not equivalent to
any complex structure in (ii), and moreover, two complex structures $J$ and $J'$ in (ii), respectively in (iii),
are equivalent if and only if $A=A'$, respectively $B=B'$.
\end{proof}

\begin{remark}\label{remark_s12_one}
{\rm Observe that $\tilde{J}$ given by (i) is the complex-parallelizable structure on the
Nakamura manifold \cite{nakamura}, and the complex structure given by $A = 0$ in the family (ii) corresponds to
the abelian complex structure, see \cite{andrada-barberis-dotti}. In addition, a
complex structure of splitting type on $\frs_{12}$
gives rise to a complex solvmanifold with holomorphically trivial canonical bundle if and only if
it belongs to (i) or (ii), accordingly to Remark~\ref{remark-trivialbundle}.}
\end{remark}

The following theorem reveals that the Nakamura manifold has a rich space of complex structures.
The result is based on an appropriate deformation of its abelian complex structure.

\begin{theorem}\label{canonico_no_abierta}
The property of having holomorphically trivial canonical bundle and the property of being of splitting type
are not stable under holomorphic deformations.
\end{theorem}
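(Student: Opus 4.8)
The plan is to construct an explicit one-parameter analytic family $\{J_t\}_{t\in\Delta}$ of complex structures on the Nakamura manifold $X=G/\Gamma$ (whose Lie algebra is $\frs_{12}$) with $J_0$ equal to the abelian complex structure, i.e.\ the member $A=0$ of family (ii) in Proposition~\ref{prop:moduli_Nk}, and to show that for $t\neq 0$ small the structure $J_t$ is neither of splitting type nor has holomorphically trivial canonical bundle. Since $J_0$ is of splitting type with holomorphically trivial canonical bundle (Remark~\ref{remark_s12_one}), this proves that both properties fail to be open under deformation. I work through Kodaira--Spencer theory: deformations of $J_0$ are governed by Beltrami differentials $\phi(t)\in A^{0,1}(X;T^{1,0}X)$ solving the Maurer--Cartan equation $\db\phi=\tfrac12[\phi,\phi]$, the deformed $(1,0)$-coframe being $\mu^i_t=\omega^i-\sum_k\phi^i_{\bar k}(t)\,\omega^{\bar k}$.

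The first, and main, point is the choice of $\phi$. Invariant directions do not suffice: writing $d\omega^1=-\omega^{1\bar3}$, $d\omega^2=\omega^{2\bar3}$, $d\omega^3=0$ for $J_0$, one checks that every \emph{integrable invariant} perturbation is, up to equivalence, either trivial or again a member of family (ii). Indeed, the off-diagonal invariant Beltrami differentials built from $\omega^{\bar1},\omega^{\bar2}$ violate Maurer--Cartan, producing a nonzero $(0,2)$ Nijenhuis component, while the admissible $\omega^{\bar3}\otimes\partial_{\omega^i}$ directions only move the parameter $A$ or are $\db$-exact. Hence one must exploit the genuinely larger deformation space available because $X$ is a non-nilpotent solvmanifold: the cohomology $H^{0,1}_{\db}(X;T^{1,0}X)$ contains classes not represented by invariant forms, detected through the finite-dimensional subcomplex of Kasuya underlying condition (5) of Definition~\ref{def-splitting-type}. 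I would take a globally well-defined, non-invariant representative $\phi(t)$ lying in such a transverse class (using the characters of the $\C$-action to keep $\phi(t)$ $\Gamma$-invariant as a tensor while non-constant as a coefficient), and verify that it solves the Maurer--Cartan equation \emph{exactly}, so that $\{J_t\}$ is an honest holomorphic family rather than a merely first-order one.

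Once $\phi(t)$ is in hand I would read off the two conclusions from the deformed structure equations. For the canonical bundle I compute the deformed $(3,0)$-form $\Omega_t:=\mu^1_t\wedge\mu^2_t\wedge\mu^3_t$ and show that $d\Omega_t\neq 0$ for $t\neq 0$; by \cite[Proposition 2.1]{fino-otal-ugarte} a nowhere-vanishing holomorphic $(3,0)$-form on $X_t$ would necessarily be invariant, hence a constant multiple of $\Omega_t$, so its nonexistence (equivalently $d\Omega_t\neq 0$) shows that $K_{X_t}$ is not holomorphically trivial, giving the failure of the first property. For splitting type I argue by contradiction: if $J_t$ were of splitting type then, by Proposition~\ref{prop:moduli_Nk}, it would be equivalent to a structure in families (i), (ii) or (iii); cases (i)--(ii) have trivial canonical bundle, contradicting the previous step, so $J_t$ would have to be equivalent to a case-(iii) structure. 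I then rule this out, most cleanly by observing that every splitting-type structure carries a $d$-closed $(1,0)$-form $\omega^3$ (Theorem~\ref{prop:eq-struttura}), whereas $J_t$ can be arranged to admit no nonzero $d$-closed holomorphic $1$-form; alternatively, by comparing a cohomological invariant, computed via Kasuya's complex, that separates $J_t$ from the case-(iii) family. This yields the failure of the second property.

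The main obstacle, as indicated, is the construction and integrability check for the non-invariant deformation $\phi(t)$: the naive invariant perturbations are obstructed, so the whole point is to use the non-invariant Dolbeault classes peculiar to the solvmanifold and to confirm that the resulting curve of almost-complex structures is genuinely integrable and transverse to the splitting locus. The subsequent computations of $d\Omega_t$ and of the $d$-closed $(1,0)$-forms of $J_t$ are then routine.
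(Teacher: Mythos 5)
Your overall architecture (deform the abelian structure $J_0$, detect non-triviality of the canonical bundle via $d\Omega_t\neq 0$ together with invariance, and rule out splitting type via Proposition~\ref{prop:moduli_Nk} and the absence of closed holomorphic $(1,0)$-forms) matches the paper's, but there is a genuine gap at the core of your argument: your claim that \emph{invariant directions do not suffice} is false, and the non-invariant construction you propose in its place is never actually carried out. Writing the structure equations of $J_0$ as $d\omega^1=-\omega^{1\bar3}$, $d\omega^2=\omega^{2\bar3}$, $d\omega^3=0$, and letting $\{Z_1,Z_2,Z_3\}$ be the invariant $(1,0)$-frame dual to $\{\omega^1,\omega^2,\omega^3\}$, the \emph{invariant} Beltrami differential $\phi(t)=t\,\omega^{\bar1}\otimes Z_3$ (your $\omega^{\bar1}\otimes\partial_{\omega^3}$), i.e.\ the invariant coframe $\omega_t^1:=\omega^1$, $\omega_t^2:=\omega^2$, $\omega_t^3:=\omega^3-t\,\omega^{\bar1}$, is integrable for every $t$: one computes
\begin{equation*}
d\omega_t^1=-\omega_t^{1\bar3},\qquad d\omega_t^2=-\bar t\,\omega_t^{12}+\omega_t^{2\bar3},\qquad d\omega_t^3=-t\,\omega_t^{3\bar1},
\end{equation*}
which have no $(0,2)$-components, so the Maurer--Cartan equation holds exactly. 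This is precisely the deformation the paper uses (its equations~\eqref{def_Nak}), and it is ``off-diagonal and built from $\omega^{\bar1}$'' --- exactly the type of direction you claim produces a non-zero $(0,2)$ Nijenhuis component. Your obstruction computation only holds for the directions $\omega^{\bar j}\otimes Z_i$ with $i,j\in\{1,2\}$, $i\neq j$ (e.g.\ $\omega^{\bar1}\otimes Z_2$ does produce a term $t\,\omega_t^{\bar1\bar3}$); you missed $\omega^{\bar1}\otimes Z_3$ and $\omega^{\bar2}\otimes Z_3$, which are unobstructed.

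This error propagates through the rest of the proposal. First, the object your proof hinges on --- a non-invariant $\phi(t)$ solving Maurer--Cartan exactly --- is only announced (``I would take \dots and verify''), not produced, so the heart of the argument is missing. Second, even if such a $\phi(t)$ existed, the resulting $J_t$ would not be left-invariant, and then both of your concluding steps break down: \cite[Proposition 2.1]{fino-otal-ugarte} yields that a nowhere vanishing holomorphic $(3,0)$-form is necessarily invariant only \emph{for left-invariant complex structures}, so you could not reduce triviality of $K_{X_t}$ to the single computation $d\Omega_t\neq0$; and Proposition~\ref{prop:moduli_Nk} classifies invariant structures up to Lie-algebra automorphism, so a non-invariant $J_t$ cannot be compared with families (i)--(iii) in the way you intend. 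Once you replace the non-invariant ansatz by the invariant deformation above, your two concluding steps go through essentially verbatim and coincide with the paper's proof: $d\omega_t^{123}=-t\,\omega_t^{123\bar1}\neq0$ excludes holomorphic triviality for $t\neq0$ (so $J_t$ is not in (i) or (ii), by Remark~\ref{remark_s12_one}), and since $\db_t\!\left(a\,\omega_t^1+b\,\omega_t^2+c\,\omega_t^3\right)=-a\,\omega_t^{1\bar3}+b\,\omega_t^{2\bar3}-ct\,\omega_t^{3\bar1}$ vanishes only for $a=b=c=0$ when $t\neq0$, there is no non-zero invariant holomorphic $(1,0)$-form, so $J_t$ is not equivalent to any case-(iii) structure (each of which has the closed form $\omega^3$); hence $J_t$ is not of splitting type.
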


\begin{proof}
Although the first part of the theorem was firstly shown by Nakamura~\cite{nakamura},
we provide other proof based on the invariant complex geometry described in Proposition~\ref{prop:moduli_Nk}.

Let $\Gamma$ be any lattice on the Lie group $G_{12}$ corresponding to $\frs_{12}$, and consider the complex solvmanifold
$X_0=(G_{12}/\Gamma,J_0)$ endowed with its abelian complex structure $J_0$,
which is given by taking $A=0$ in the family (ii) of Proposition~\ref{prop:moduli_Nk}.
Consider an open disc $\Delta:=\Delta(0,\epsilon)$ around $0$ in $\mathbb{C}$ for $\epsilon>0$
small enough, and the family $\{X_t\}_{t\in\Delta}$ of complex solvmanifolds given
by the solvmanifold $G_{12}/\Gamma$ endowed with the complex structure $J_t$ defined by the (1,0)-co-frame $\{\omega_t^1:=\omega^1,\;\omega_t^2:=\omega^2,\;
\omega_t^3:=\omega^3-t\omega^{\bar1}\}$.
Notice that the form $\omega^{\bar1}$ defines a non-zero Dolbeault cohomology class on $X_0$, and so the previous family $X_t$
provides a small holomorphic deformation of $X_0$.
The complex structure equations of the invariant  complex structure $J_t$ are
\begin{equation}\label{def_Nak}
d\omega_t^1=-\omega_t^{1\bar3},\qquad d\omega_t^2=-\bar t\,\omega_t^{12}+\omega_t^{2\bar3},\qquad  d\omega_t^3=-t\,\omega_t^{3\bar1}.
\end{equation}

Now, since $d\omega_t^{123}=-t\,\omega_t^{123\bar1}\neq 0$ for any $t\in\Delta^*$, by~\cite[Proposition 2.1]{fino-otal-ugarte} the solvmanifold
$X_t$ does not have holomorphically trivial canonical bundle for any $t\neq0$.
Indeed, $J_t$ does not belong to (i) or (ii) for $t\neq0$, see Remark~\ref{remark_s12_one}.
Moreover, from the complex structure equations~\eqref{def_Nak} one also has that $J_t$ does not belong to the family (iii),
because there are not non-zero invariant holomorphic (1,0)-form for $t\neq0$.
In conclusion, $J_t$ is not of splitting type for any $t\neq0$.
\end{proof}

\begin{remark}
{\rm
 All the complex structures $J_t$ given in the proof of Theorem~\ref{canonico_no_abierta} admit balanced metrics.
 }
\end{remark}

\subsection{The \texorpdfstring{$\partial\bar\partial$}{partial-overline-partial}-Lemma on a family of splitting-type complex structures on the Nakamura manifold}
In \cite[Proposition 3.7]{fino-otal-ugarte} the complex structures on the Lie algebra  $\frs_{12}$ giving rise to complex solvmanifolds with holomorphically trivial canonical
bundle are classified.  There are two complex structures, denoted in the aforementioned paper by $J'$ and $J''$, and a family $J_C$ parametrized by $C \in \C$ with $\Imag C \neq~0$ which can be represented by a $(1,0)$-co-frame 
$\left\{\omega_C^1,\omega_C^2,\omega_C^3\right\}$ with structure equations:
\begin{equation}\label{htcb}
J_C : \quad \left\{ \begin{array}{l}
            d \omega_C^1 \;=\; - (C - i)\, \omega_C^{13} - (C + i)\, \omega_C^{1\bar3}, \\[5pt]
            d \omega_C^2 \;=\;   (C - i)\, \omega_C^{23} + (C + i)\, \omega_C^{2\bar3}, \\[5pt]
            d \omega_C^3 \;=\; 0.
           \end{array}\right. \;
\end{equation}

\noindent Observe that all the structures $J_C$ are of splitting type, whereas $J'$ and $J''$ are not.

Moreover, the family~\eqref{htcb} unifies
the complex structures (i) and (ii) of Proposition~\ref{prop:moduli_Nk}.
Concretely, if $C=-i$ in \eqref{htcb} then we obtain the complex-parallelizable structure $\tilde{J}$ in Proposition~\ref{prop:moduli_Nk},
whereas if $C\neq-i$ then the complex structure $J_C$ corresponds to the complex structure $J_A$ in the family (ii) of Proposition~\ref{prop:moduli_Nk} for $A=-(C-i)/(\bar C-i)$.
Thus, the connected and simply-connected solvable Lie group $G_{12}$
with Lie algebra $\frs_{12}$, endowed with a left-invariant complex structure $J_C$ given by~\eqref{htcb},
may be written as a semi-direct product
$(G_{12},J_C)=\C\ltimes_{\varphi_C}\C^2$,
where the action $\varphi_{C}$ is described by a diagonal matrix~\eqref{phi_dos} and the characters $\alpha_1^C,\alpha_2^C\colon\C\to\C^*$ are
\begin{equation}\label{caracteres_g8}
\alpha_1^C(z_3)\;=\;e^{-(C-i)z_3-(C+i)\bar z_3},\qquad\alpha_2^C(z_3)\;=\;\alpha_1^C(z_3)^{-1}.
\end{equation}

Now, we are concerned with the construction of lattices $\Gamma$ in $(G_{12},J_C)$ compatible with the splitting. They are of the form $\Gamma=\Gamma'\ltimes_{\varphi_C}\Gamma''$, where
$\Gamma'$ and $\Gamma''$ are lattices of $\C$ and $\C^2$ respectively  and $\Gamma'$ is compatible with the splitting, in other words, $\varphi_C(z)\left(\Gamma''\right)\subseteq\Gamma''$ for any
$z\in\Gamma'$. The former condition implies that $\varphi_C|_{\Gamma'}$ must be in the conjugation class of a matrix in GL$(2,\Z)$.

\begin{lemma}\label{lema_lattice}
For every $C\in\C$ with $\Imag C\neq0$, the lattice $\Gamma_C':=\frac{\pi}{2\,\Imag C}(1-i\Real C)\Z\oplus \frac{i}{2}\log(\frac{3+\sqrt{5}}{2})\Z$ of $\C$ is compatible with the splitting
$\varphi_C$ given by the characters~\eqref{caracteres_g8}. Thus, the complex solvmanifold $X_C\;:=\;
(G_{12}/\Gamma_C,J_C)$ is of splitting type, where $\Gamma_C:=\Gamma_C'\ltimes_{\varphi_C}\Gamma''$ and $\Gamma''$ is
a lattice of $\C^2$.
\end{lemma}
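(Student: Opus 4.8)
The plan is to verify directly the compatibility requirement in the fourth condition of Definition~\ref{def-splitting-type}: one must produce a rank-$4$ lattice $\Gamma''\subset\C^2\cong\R^4$ that is preserved by $\varphi_C(z)$ for every $z\in\Gamma_C'$. Since $\varphi_C\colon(\C,+)\to\GL(\C^2)$ is a group homomorphism (this is immediate from the exponential form of the characters~\eqref{caracteres_g8}), it suffices to preserve $\Gamma''$ under the images of the two $\Z$-generators
$$ z_1:=\frac{\pi}{2\,\Imag C}(1-i\,\Real C), \qquad z_2:=\frac{i}{2}\log\!\left(\frac{3+\sqrt5}{2}\right) $$
of $\Gamma_C'$. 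First I would note that $\Gamma_C'$ really is a lattice of $\C$: $z_1$ has non-zero real part (as $\Imag C\neq0$) while $z_2$ is purely imaginary and non-zero, so the two are $\R$-linearly independent.

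The key computation is to simplify the exponent in~\eqref{caracteres_g8}. Writing $z=x+iy$ and grouping terms,
$$ -(C-i)\,z-(C+i)\,\bar z \;=\; -C(z+\bar z)+i(z-\bar z) \;=\; -2\,(Cx+y), $$
so that $\alpha_1^C(z)=e^{-2(Cx+y)}$ and $\alpha_2^C(z)=e^{2(Cx+y)}$. Evaluating at $z_1$, whose real and imaginary parts are tuned precisely so that $Cx_1+y_1=i\pi/2$, I obtain $\alpha_1^C(z_1)=e^{-i\pi}=-1$, whence $\varphi_C(z_1)=-\mathrm{Id}$. Evaluating at the purely imaginary $z_2$ gives $-2(Cx_2+y_2)=-\log(\tfrac{3+\sqrt5}{2})$, so that
$$ \varphi_C(z_2)=\begin{pmatrix}\frac{3-\sqrt5}{2}&0\\[2pt]0&\frac{3+\sqrt5}{2}\end{pmatrix}. $$

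It remains to assemble $\Gamma''$. As $\varphi_C(z_1)=-\mathrm{Id}$ preserves every lattice, the only constraint comes from $\varphi_C(z_2)$. Setting $\mu:=\frac{3-\sqrt5}{2}$, its real $4\times4$ matrix in the coordinates $(x_1,y_1,x_2,y_2)$ of $\C^2$ is $\mathrm{diag}(\mu,\mu,\mu^{-1},\mu^{-1})$, which after pairing $(x_1,x_2)$ and $(y_1,y_2)$ splits into two identical blocks $\mathrm{diag}(\mu,\mu^{-1})$. The crucial arithmetic point is that $\mu$ and $\mu^{-1}=\frac{3+\sqrt5}{2}$ are exactly the eigenvalues of the hyperbolic integer matrix $\left(\begin{smallmatrix}2&1\\1&1\end{smallmatrix}\right)\in\mathrm{SL}(2,\Z)$, with characteristic polynomial $\lambda^2-3\lambda+1$; conjugating this matrix to its real diagonal form by some $P_0\in\mathrm{GL}(2,\R)$ shows that $\mathrm{diag}(\mu,\mu^{-1})$ preserves the rank-$2$ lattice $P_0^{-1}\Z^2$, and taking two copies yields a $\Gamma''$ preserved by $\varphi_C(z_2)$ (and trivially by $-\mathrm{Id}$). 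I expect this arithmetic coincidence to be the real heart of the matter: the first generator is engineered to make the character a root of unity, and the second to make the pair of characters reciprocal units coming from an $\mathrm{SL}(2,\Z)$ matrix. Without such a choice no compatible lattice could exist, and this is exactly what dictates the otherwise mysterious constants in $\Gamma_C'$.

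Finally, with $\Gamma_C:=\Gamma_C'\ltimes_{\varphi_C}\Gamma''$ now a genuine lattice of $G_{12}$, I would check that $(G_{12}/\Gamma_C,J_C)$ meets all the requirements of Definition~\ref{def-splitting-type}: the nilpotent factor $N=\C^2$ is abelian and simply connected, each $\varphi_C(z)$ is diagonal and hence a holomorphic automorphism inducing a semisimple action, the compatibility just established is the fourth condition, and the fifth condition holds because $N=\C^2$ yields a complex torus, for which the Dolbeault cohomology is computed by invariant forms. Therefore $X_C$ is of splitting type, as claimed.
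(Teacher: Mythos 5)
Your proof is correct and follows essentially the same route as the paper's: both reduce compatibility to checking that the images under $\varphi_C$ of the two generators of $\Gamma_C'$ are conjugate to matrices in $\mathrm{GL}(2,\Z)$ via their characteristic polynomials, the paper's condition $(C-i)z_3+(C+i)\bar z_3=\log\bigl(\tfrac{n+\sqrt{n^2-4}}{2}\bigr)$ with $n=-2$ and $n=3$ being exactly your computations $\varphi_C(z_1)=-\mathrm{Id}$ and $\varphi_C(z_2)=\mathrm{diag}\bigl(\tfrac{3-\sqrt5}{2},\tfrac{3+\sqrt5}{2}\bigr)$. If anything your write-up is slightly more careful, since observing $\varphi_C(z_1)=-\mathrm{Id}$ makes explicit why a \emph{single} lattice $\Gamma''$ can be preserved by both generators simultaneously (and you exhibit it), a point the paper leaves implicit.
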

\begin{proof}

After computing its characteristic polynomial, it turns out that the diagonal matrix~\eqref{phi_dos} with characters~\eqref{caracteres_g8} is in the conjugation class of a matrix in GL$(2,\Z)$, if the condition $(C-i)z_3+(C+i)\bar z_3=\log(\frac{n+\sqrt{n^2-4}}{2})$ holds for some $n\in\Z$. In particular, fixed $C\in\C$ with $\Imag C\neq 0$, we get $z_3=\frac{\pi}{2\,\Imag C}(1-i\Real C)$ for $n=-2$
  and
$z_3=\frac{i}{2}\log(\frac{3+\sqrt{5}}{2})$ for $n=3$. Therefore, $\Gamma_C'=\frac{\pi}{2\,\Imag C}(1-i\Real C)\Z\oplus \frac{i}{2}\log(\frac{3+\sqrt{5}}{2})\Z$ is a lattice of $\C$ compatible with the
splitting.
\end{proof}

As a consequence of the previous lemma, we have a family $\{X_C\}_{ \Imag C\neq0}$ of complex solvmanifolds of splitting type with underlying real Lie algebra $\frs_{12}$. We are interested in knowing
which of them satisfy the $\partial\db$-Lemma. The following result states a sufficient condition in order to satisfy the $\partial\db$-Lemma.  This condition is stated and proved in terms of the differential complexes
$(B_\Gamma^{\bullet,\bullet},\db)$ and
$(C_\Gamma^{\bullet,\bullet},\partial,\db)$ defined by Kasuya~\cite{kasuya-mathz}, respectively, by Kasuya and the first author~\cite{angella-kasuya-1}.  Recall that such complexes allow to compute the Dolbeault, respectively the Bott-Chern cohomology of complex solvmanifolds of
splitting type. 

\begin{lemma}\label{lema_B}
Let $X=(G/\Gamma,J)$ be a complex solvmanifold of splitting type. If
$\partial|_{B^{\bullet,\bullet}_\Gamma}=\db|_{B^{\bullet,\bullet}_\Gamma}=0$ and $B^{q,p}_\Gamma=\overline{B^{p,q}_\Gamma}$ for all $p,q\in\N$, then $X$ satisfies the $\partial\db$-Lemma.
\end{lemma}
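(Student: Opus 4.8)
The plan is to compute the relevant cohomologies of $X$ through the two finite-dimensional models and then invoke a standard characterization of the $\partial\db$-Lemma: it suffices to prove that the canonical morphism $H^{\bullet,\bullet}_{BC}(X)\to H^{\bullet,\bullet}_{\bar\partial}(X)$ induced by the identity is injective in every bidegree (see e.g. Deligne--Griffiths--Morgan--Sullivan, or Angella--Tomassini). So the whole argument reduces to identifying both $H_{\bar\partial}$ and $H_{BC}$ with the \emph{same} space and checking that the comparison map is the identity.

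First I would exploit the hypothesis $\db|_{B^{\bullet,\bullet}_\Gamma}=0$. By Kasuya's theorem \cite{kasuya-mathz}, the inclusion $(B^{\bullet,\bullet}_\Gamma,\db)\hookrightarrow(\wedge^{\bullet,\bullet}X,\db)$ is a quasi-isomorphism, so $H^{p,q}_{\bar\partial}(X)\cong H^{p,q}_{\bar\partial}(B_\Gamma)$; since $\db$ vanishes identically on $B_\Gamma$, the cohomology of $(B_\Gamma,\db)$ is $B_\Gamma$ itself, whence $H^{p,q}_{\bar\partial}(X)\cong B^{p,q}_\Gamma$ for all $p,q$. Combining the conjugation isomorphism $H^{p,q}_{\partial}(X)\cong\overline{H^{q,p}_{\bar\partial}(X)}$ with the reality condition $\overline{B^{p,q}_\Gamma}=B^{q,p}_\Gamma$ shows that the same complex also models $H^{\bullet,\bullet}_{\partial}(X)$.

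The core step is to see that the Bott-Chern cohomology is modeled by this very complex. Here I would use the complex $(C^{\bullet,\bullet}_\Gamma,\partial,\db)$ of Angella and Kasuya \cite{angella-kasuya-1}, whose Bott-Chern cohomology computes $H^{\bullet,\bullet}_{BC}(X)$. Since in that construction $C_\Gamma$ is assembled from $B_\Gamma$ together with its conjugate $\overline{B_\Gamma}$ (Bott-Chern cohomology involving both $\partial$ and $\db$), the reality hypothesis $\overline{B^{p,q}_\Gamma}=B^{q,p}_\Gamma$ forces $C_\Gamma$ to coincide with $B_\Gamma$. Together with $\partial|_{B_\Gamma}=\db|_{B_\Gamma}=0$ (hence a fortiori $\partial\db|_{B_\Gamma}=0$), this gives $H^{p,q}_{BC}(X)\cong B^{p,q}_\Gamma$ as well. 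Put differently, $(B^{\bullet,\bullet}_\Gamma,\partial,\db)$ is a finite double complex with vanishing differentials, i.e. a direct sum of one-dimensional pieces carrying trivial maps, and such a double complex \emph{tautologically} satisfies the $\partial\db$-Lemma. Finally, because the quasi-isomorphisms of Kasuya and of Angella--Kasuya are both induced by the inclusion of forms, they are compatible with the canonical morphism $H^{\bullet,\bullet}_{BC}\to H^{\bullet,\bullet}_{\bar\partial}$; under the identifications above this morphism becomes the identity of $B_\Gamma$, which is injective, and the characterization recalled at the outset yields the $\partial\db$-Lemma for $X$.

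I expect the main obstacle to be precisely this core step: matching the Angella--Kasuya Bott-Chern model $C_\Gamma$ with $B_\Gamma$. Concretely, one must check that the reality condition $\overline{B^{p,q}_\Gamma}=B^{q,p}_\Gamma$ is exactly what makes the Bott-Chern cohomology be computed by $B_\Gamma$ with its (trivial) induced differentials, and that the identification is compatible with the natural comparison morphisms. Everything else is either a direct consequence of the vanishing hypothesis or a formal consequence of the quasi-isomorphism properties established in \cite{kasuya-mathz, angella-kasuya-1}.
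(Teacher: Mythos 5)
Your proposal is correct and follows essentially the same route as the paper: the reality hypothesis gives $C^{\bullet,\bullet}_\Gamma=B^{\bullet,\bullet}_\Gamma+\overline{B^{\bullet,\bullet}_\Gamma}=B^{\bullet,\bullet}_\Gamma$, the vanishing of $\partial$ and $\db$ on $B_\Gamma$ makes both the Bott-Chern cohomology of $C_\Gamma$ and the Dolbeault cohomology of $B_\Gamma$ equal to $B_\Gamma$ itself, and the Kasuya and Angella--Kasuya quasi-isomorphisms then identify $H^{\bullet,\bullet}_{\mathrm{BC}}(X)\cong B^{\bullet,\bullet}_\Gamma\cong H^{\bullet,\bullet}_{\db}(X)$ naturally. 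The paper leaves the final step (injectivity of the natural map $H_{\mathrm{BC}}\to H_{\db}$ as a characterization of the $\partial\db$-Lemma) implicit in the word ``natural,'' whereas you spell it out; this is a presentational difference only.
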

\begin{proof}
If the complex $(B^{\bullet,\bullet}_\Gamma,\partial,\db)$ satisfies $B^{q,p}_\Gamma=\overline{B^{p,q}_\Gamma}$ for all $p,q\in\N$ then, since $C^{\bullet,\bullet}_{\Gamma} \;:=\; B^{\bullet,\bullet}_{\Gamma} + \overline{B^{\bullet,\bullet}_{\Gamma}}$, 
 it holds
$C^{\bullet,\bullet}_\Gamma=B^{\bullet,\bullet}_\Gamma$. 
Furthermore,  the condition $\partial|_{B^{\bullet,\bullet}_\Gamma}=\db|_{B^{\bullet,\bullet}_\Gamma}=0$ forces the natural isomorphisms
$$
H^{\bullet,\bullet}_\text{BC}(X)\cong H^{\bullet,\bullet}_\text{BC}(C_\Gamma)=C^{\bullet,\bullet}_\Gamma=B^{\bullet,\bullet}_\Gamma=H^{\bullet,\bullet}_\db(B_\Gamma)\cong
H^{\bullet,\bullet}_\db(X).
$$
Hence, $X$ satisfies the $\partial\db$-Lemma.
\end{proof}

 Now we recall the construction of the differential complex $(B_{\Gamma_C}^{\bullet,\bullet},\db)$ defined in \cite[Corollary
4.2]{kasuya-mathz} in order  to compute the
Dolbeault cohomology of $X_C$. For any complex solvmanifold in the family $\{X_C\}_{\Imag C\neq0}$, 
consider a set $\{z_1,z_2\}$  of local coordinates on $\C^2$ and $z_3$ a local coordinate on $\C$.
We have the following basis  $\{\omega^1_C,\, \omega^2_C,\,\omega^3_C\}$ of
left-invariant $(1,0)$-forms, where $\omega_C^3=dz_3$ and
$$
\omega_C^1=\left(\alpha_1^C\right)^{-1}dz_1=e^{(C-i)z_3+(C+i)\bar z_3}dz_1,\quad \omega_C^2=\left(\alpha_2^C\right)^{-1}dz_2=e^{-(C-i)z_3-(C+i)\bar z_3}dz_2,
$$
satisfying the complex structure equations~\eqref{htcb}. Now, the unitary characters $\beta_1^C, \beta_2^C, \gamma_1^C, \gamma_2^C\colon\C\to\C^*$ satisfying that
$\alpha_1^C\left(\beta_1^C\right)^{-1}$,
$\alpha_2^C\left(\beta_2^C\right)^{-1}$, $\bar\alpha_1^C\left(\gamma_1^C\right)^{-1}$,
$\bar\alpha_2^C\left(\gamma_2^C\right)^{-1}$ are holomorphic and required to construct the double complex
$(B_{\Gamma_C}^{\bullet,\bullet},\db)$ are:
\begin{equation} \label{beta_g8}
\begin{array}{l}
\beta_1^C(z_3)\;=\;e^{(\bar C-i)z_3-(C+i)\bar z_3},\qquad \beta_2^C(z_3)\;=\;\beta_1^C(z_3)^{-1}\;=\;e^{-(\bar C-i)z_3+(C+i)\bar z_3},\\
\gamma_1^C(z_3)\;=\;e^{(C-i)z_3-(\bar C+i)\bar z_3},\qquad \gamma_2^C(z_3)\;=\;\gamma_1^C(z_3)^{-1}\;=\;e^{-(C-i)z_3+(\bar C+i)\bar z_3}.
\end{array}
\end{equation}

\noindent Following~\cite[Corollary 4.2]{kasuya-mathz}, \cite[Theorem 2.16]{angella-kasuya-1}, and defining for the sake of simplicity that $\beta_3^C=\gamma_3^C\equiv1$, we have that for $X_C$ the complexes 
$B_{\Gamma_C}^{\bullet,\bullet}$ and $C_{\Gamma_C}^{\bullet,\bullet}$ are generated by: 
\begin{equation}\label{complejo_B}
\begin{array}{lll}
B_{\Gamma_C}^{p,q}&=&\left\langle\beta_I^C\omega_C^I\wedge\gamma_J^C\bar\omega_C^J\,
 \middle \vert \ \begin{array}{l}\text{the restriction of }\beta_I\gamma_J\text{ on }\Gamma_C\text{ is trivial}\\
|I| = p,\,\, |J|=q
\end{array} \right\rangle,\\[15pt]
C_{\Gamma_C}^{p,q}&=&B_{\Gamma_C}^{p,q}+\bar B_{\Gamma_C}^{p,q},
\end{array}
\end{equation}
where $(p,q)\in\N^2$. Taking into account the expressions in~\eqref{beta_g8}, it turns out that the restrictions induced by the characters on the generators in~\eqref{complejo_B}  reduce in our case to satisfy one of the following conditions:
$$
\beta_1^C|_{\Gamma_C}=1,\quad\gamma_1^C|_{\Gamma_C}=1,\quad\left(\beta_1^C\gamma_1^C\right)|_{\Gamma_C}=1,\quad\left(\beta_1^C\left(\gamma_1^C\right)^{-1}\right)|_{\Gamma_C}=1.
$$

\noindent From now on, we will express the generators of the complexes $B_{\Gamma_C}^{\bullet,\bullet}$ and $C_{\Gamma_C}^{\bullet,\bullet}$  in terms of the following:

\begin{equation}\label{generadores_g8}
\left\{\begin{array}{l}
\varphi^1\;:=\;\beta_1^C\omega_C^1\;=\;e^{(C+\bar C-2i)z_3}dz_1,\\[5pt]
\varphi^2\;:=\;\beta_2^C\omega_C^2\;=\;e^{-(C+\bar C-2i)z_3}dz_2,\\[5pt]
\varphi^3\;:=\;dz_3,\end{array}\right.\qquad
\left\{\begin{array}{l}
\tilde\varphi^1\;:=\;\gamma_1^C\omega_C^{\bar1}\;=\;e^{(C+\bar C-2i)z_3}d\bar z_1,\\[5pt]
\tilde\varphi^2\;:=\;\gamma_2^C\omega_C^{\bar2}\;=\;e^{-(C+\bar C-2i)z_3}d\bar z_2,\\[5pt]
\tilde\varphi^3\;:=\; d\bar z_3,
\end{array}\right.
\end{equation}
where $ \varphi^1,\varphi^2,\varphi^3$ have bidegree $(1,0)$ and $\tilde\varphi^{1},\tilde\varphi^{2},\tilde\varphi^{3}$ have bidegree $(0,1)$.  The complex  structure equations  expressed in the
co-frame $\{\varphi^1,\varphi^2,\varphi^3, \tilde\varphi^{1},\tilde\varphi^{2},\tilde\varphi^{3}\}$ are:
\begin{equation}\label{eq_phi_g8}
\begin{array}{cc}
\left\{\begin{array}{l}
d\varphi^1\;=\;-(C+\bar C-2i)\varphi^{13},\\
d\varphi^2\;=\;(C+\bar C-2i)\varphi^{23},\\
d\varphi^3\;=\;0,
\end{array}\right.&
\left\{\begin{array}{l}
d\tilde\varphi^{1}\;=\;(C+\bar C-2i)\varphi^{3\tilde1},\\
d\tilde\varphi^{2}\;=\;-(C+\bar C-2i)\varphi^{3\tilde2},\\
d\tilde\varphi^{3}\;=\;0.
\end{array}\right.
\end{array}
\end{equation}  
In the tables below, we shorten, e.g., $\varphi^{1\tilde2}:=\varphi^1\wedge\tilde\varphi^2$.

\begin{proposition}\label{solv_g8}
Let $X_C=(G_{12}/\Gamma_C,J_C)$ be a complex solvmanifold according to Lemma~\ref{lema_lattice}.
Then, $X_C$ satisfies the
$\partial\db$-Lemma if and only if $C\neq\frac{i}{k}\in\C$, for  $0\neq k\in\Z$.
\end{proposition}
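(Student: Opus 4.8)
The plan is to prove the two implications separately by means of the finite double complexes $(B_{\Gamma_C}^{\bullet,\bullet},\partial,\db)$ and $(C_{\Gamma_C}^{\bullet,\bullet},\partial,\db)$ recalled above: sufficiency ($C\neq i/k$) through the criterion of Lemma~\ref{lema_B}, and necessity ($C=i/k$) by exhibiting an explicit obstruction to the $\partial\db$-Lemma inside $C_{\Gamma_C}^{\bullet,\bullet}$. The first step, common to both directions, is to decide exactly which monomials $\varphi^I\wedge\tilde\varphi^J$ generate $B_{\Gamma_C}^{\bullet,\bullet}$ as a function of $C$. By~\eqref{complejo_B} this amounts to evaluating the characters $\beta_1^C$, $\gamma_1^C$, $\beta_1^C\gamma_1^C$ and $\beta_1^C(\gamma_1^C)^{-1}$ on the two generators of the lattice $\Gamma_C'$ of Lemma~\ref{lema_lattice}. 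A direct computation with~\eqref{beta_g8} shows that $\beta_1^C(\gamma_1^C)^{-1}$ restricts trivially to $\Gamma_C$ for every $C$, whereas $\beta_1^C\gamma_1^C$ restricts trivially if and only if $C=i/k$ with $0\neq k\in\Z$: the purely-imaginary generator forces $\Real C=0$ (using that $\tfrac{1}{\pi}\log\tfrac{3+\sqrt5}{2}\notin\Q$), and the other generator then forces $\Imag C=1/k$. The single characters $\beta_1^C,\gamma_1^C$ turn out to be trivial only in the narrower subcase $C=i/k$ with $k$ odd.

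To exploit this, I would attach to each monomial $\Phi=\varphi^I\wedge\tilde\varphi^J$ the two integers $a,c$ counting the occurrences of the index $1$ minus those of the index $2$ in $I$ and in $J$ respectively, and set $\delta(\Phi):=a+c$; then $\Phi$ lies in $B_{\Gamma_C}^{\bullet,\bullet}$ iff $(\beta_1^C)^{a}(\gamma_1^C)^{c}|_{\Gamma_C}=1$. Writing $\mu:=C+\bar C-2i$ (note $\mu\neq0$ always), the structure equations~\eqref{eq_phi_g8} give $\db\,\Phi=0$ for every $\Phi$, and by the Leibniz rule $\partial\Phi=-\mu\,\delta(\Phi)\,\Phi\wedge\varphi^3$ up to sign. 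For $C\neq i/k$ the surviving monomials are exactly those with $\delta(\Phi)=0$ (the combinations of type $\beta_1^C(\gamma_1^C)^{\mp1}$, or with no net index $1,2$), whence $\partial|_{B_{\Gamma_C}}=\db|_{B_{\Gamma_C}}=0$. Moreover, when $\delta(\Phi)=0$ the factors $e^{\pm\mu z_3}$ in~\eqref{generadores_g8} cancel, so every such $\Phi$ is a constant-coefficient form $dz_I\wedge d\bar z_J$; the set of admissible pairs $(I,J)$ is symmetric under $(I,J)\mapsto(J,I)$, hence $B_{\Gamma_C}^{q,p}=\overline{B_{\Gamma_C}^{p,q}}$. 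Both hypotheses of Lemma~\ref{lema_B} are then met, and $X_C$ satisfies the $\partial\db$-Lemma.

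For the converse, suppose $C=i/k$. Then $\beta_1^C\gamma_1^C|_{\Gamma_C}=1$, so the monomials $\varphi^{1\tilde1}:=\varphi^1\wedge\tilde\varphi^1$ and $\varphi^{13\tilde1}$, both of charge $\delta=2$, now enter the complex $C_{\Gamma_C}^{\bullet,\bullet}$. From~\eqref{eq_phi_g8} one finds that $d(\varphi^{1\tilde1})=\partial(\varphi^{1\tilde1})$ is a nonzero multiple of $\varphi^{13\tilde1}$ (the $\db$-part vanishing), so $\varphi^{13\tilde1}$ is at once $d$-closed, $d$-exact and $\partial$-exact. On the other hand, since $\db$ annihilates every $(1,0)$-form of the complex, one has $\partial\db(C_{\Gamma_C}^{1,0})=0$, so $\varphi^{13\tilde1}$ is not $\partial\db$-exact. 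As $C_{\Gamma_C}^{\bullet,\bullet}$ computes the Bott--Chern cohomology of $X_C$, this $d$-exact, $d$-closed form of pure type that fails to be $\partial\db$-exact violates one of the equivalent formulations of the $\partial\db$-Lemma; hence $X_C$ does not satisfy it. Because $\mu=-2i\neq0$ here, the argument is uniform in the parity of $k$.

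The step I expect to be most delicate is the character bookkeeping of the first paragraph: one must evaluate $\beta_1^C,\gamma_1^C$ and their products at both generators of $\Gamma_C'$ — the first with nonzero real and imaginary parts, the second purely imaginary and involving $L=\log\tfrac{3+\sqrt5}{2}$ — and invoke the irrationality of $L/\pi$ to conclude that triviality of $\beta_1^C\gamma_1^C$ pins $C$ down to $i/k$. A secondary point requiring care is the conjugation symmetry $B_{\Gamma_C}^{q,p}=\overline{B_{\Gamma_C}^{p,q}}$ needed in Lemma~\ref{lema_B}: although $\overline{\varphi^i}\neq\tilde\varphi^i$ in general, the cancellation of the exponential factors on the $\delta=0$ monomials reduces them to honest constant-coefficient forms, which is precisely what renders the symmetry transparent.
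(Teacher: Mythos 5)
Your proposal is correct in substance, and the two directions deserve separate comments. For sufficiency ($C\neq\frac{i}{k}$) you follow the same strategy as the paper, namely verifying the hypotheses of Lemma~\ref{lema_B}; your ``charge'' bookkeeping (membership of $\varphi^I\wedge\tilde\varphi^J$ in $B^{\bullet,\bullet}_{\Gamma_C}$ governed by triviality of $\beta_1^a\gamma_1^c$ on $\Gamma_C$, together with $\db\Phi=0$ and $\partial\Phi=-\mu\,\delta(\Phi)\,\Phi\wedge\varphi^3$, where $\mu:=C+\bar C-2i$) is simply a more systematic organization of what the paper records in Table~\ref{tabla_g8}. Your character computation is the correct one: a direct check with \eqref{beta_g8} gives $\beta_1^C\left(\gamma_1^C\right)^{-1}(z_3)=e^{-4i\,\Imag C\,\Real z_3}$, which is trivial on both generators of $\Gamma_C'$ for every $C$, while $\beta_1^C\gamma_1^C$ is trivial exactly when $C=\frac{i}{k}$. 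Note that this agrees with Table~\ref{tabla_g8} ($\varphi^{1\tilde2}$ occurs in every column, $\varphi^{1\tilde1}$ only when $C=\frac{i}{k}$) but \emph{not} with the displayed trichotomy in the paper's own proof, where the roles of these two products are inadvertently swapped; your version, including the explicit appeal to the irrationality of $\frac{1}{\pi}\log\frac{3+\sqrt5}{2}$, is the right one.

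For necessity ($C=\frac{i}{k}$) you take a genuinely different route. The paper computes the Hodge numbers from Table~\ref{tabla_g8} and observes $h_\db^{2,0}(X_C)+h_\db^{1,1}(X_C)+h_\db^{0,2}(X_C)\neq b_2(X_C)$, invoking the numerical characterization of $\partial\db$-manifolds; you instead exhibit the explicit obstruction $\varphi^{13\tilde1}=-\frac{1}{2\mu}\,d\left(\varphi^{1\tilde1}\right)$, a $d$-exact pure-type form with nonzero Bott--Chern class, which is more economical (no cohomology tables needed) once the isomorphism $H^{\bullet,\bullet}_{\text{BC}}(C_{\Gamma_C})\cong H^{\bullet,\bullet}_{\text{BC}}(X_C)$ is invoked, as you do. However, one step is wrongly justified: it is \emph{not} true that $\db$ annihilates every $(1,0)$-form of the complex $C^{\bullet,\bullet}_{\Gamma_C}$. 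Indeed $C^{1,0}_{\Gamma_C}=B^{1,0}_{\Gamma_C}+\overline{B^{0,1}_{\Gamma_C}}$ contains $\bar{\tilde\varphi}^1=e^{\bar\mu\bar z_3}\,dz_1$, and $\db\,\bar{\tilde\varphi}^1=\bar\mu\,\bar\varphi^3\wedge\bar{\tilde\varphi}^1\neq0$. The conclusion you need, $\partial\db\left(C^{1,0}_{\Gamma_C}\right)=0$, nevertheless holds: the generators from $B^{1,0}_{\Gamma_C}$ are killed by $\db$, and for $\bar{\tilde\varphi}^j$ one has $\partial\bar{\tilde\varphi}^j=0$, hence $\db\bar{\tilde\varphi}^j=d\bar{\tilde\varphi}^j$ and $\partial\db\,\bar{\tilde\varphi}^j$ is the $(2,1)$-component of $d^2\bar{\tilde\varphi}^j=0$. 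With this repair, $[\varphi^{13\tilde1}]\neq0$ in $H^{2,1}_{\text{BC}}(C_{\Gamma_C})\cong H^{2,1}_{\text{BC}}(X_C)$, while $\varphi^{13\tilde1}$ is $d$-exact on $X_C$ (its primitive $\varphi^{1\tilde1}$ descends to $X_C$ precisely because $\beta_1^C\gamma_1^C|_{\Gamma_C}=1$), so the $\partial\db$-Lemma fails; and, as you note, the argument is uniform in the parity of $k$ since $\mu=-2i\neq0$ there.
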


\begin{proof}
Let $C\in\C$ with $\Imag C\neq 0$ and $\Gamma'_C$ be the lattice of $\C$ provided in Lemma~\ref{lema_lattice}. The triviality of the products of the characters restricted to $\Gamma'_C$ behaves as
follows:
$$
\left\{\begin{array}{ll}
\left(\beta_1^C\gamma_1^C\right)|_{\Gamma'_C}=1,&\text{for any }C,\\[6pt]
\left(\beta_1^C\left(\gamma_1^C\right)^{-1}\right)|_{\Gamma'_C}=1,&\text{if and only if }C=\frac{i}{k}\text{ with }0\neq k\in\Z,\\[6pt]
\beta_1^C|_{\Gamma'_C}=\gamma_1^C|_{\Gamma'_C}=1,&\text{if and only if }C=\frac{i}{2k+1}\text{ with }k\in\Z.
\end{array}\right.
$$
The computations of the double complex $B_{\Gamma_C}^{\bullet,\bullet}$ and of the Hodge and the Betti numbers  for the solvmanifolds $X_C$ can be  found in Table \ref{tabla_g8}. The computations of
these numbers reveal that if
$C=\frac{i}{k}$ for $0\neq k\in\Z$ then $h_\db^{2,0}(X_C)+h_\db^{1,1}(X_C)+h_\db^{0,2}(X_C)\neq b_2(X_C)$, thus $X_C$ does not satisfy the $\partial\db$-Lemma. However, when $C\neq\frac{i}{k}$ it
turns out that the hypothesis of
Lemma~\ref{lema_B} are satisfied by using the relations
$$
\tilde \varphi^{1}\wedge\tilde \varphi^{2}\;=\; \bar\varphi^{1}\wedge\bar\varphi^{2},\quad \varphi^{2}\wedge\tilde \varphi^{1}\;=\; -\bar\varphi^{1}\wedge\bar{\tilde\varphi}^{2},\quad
\varphi^{1}\wedge\tilde\varphi^{2}\;=\; -\bar{\varphi}^{2}\wedge\bar{\tilde\varphi}^{1},
$$
of
the generators~\eqref{generadores_g8},
and the complex structure equations~\eqref{eq_phi_g8}.
Hence, all the corresponding complex solvmanifolds $X_C$ for $C\neq\frac{i}{k}$ satisfy the $\partial\db$-Lemma.
\end{proof}

\subsection{The \texorpdfstring{$\partial\db$}{partial-overline-partial}-Lemma under holomorphic deformations}\label{subsubsec:JA}\label{subsubsec:JAt}

In this section we construct complex solvmanifolds of splitting type with holomorphically trivial canonical bundle that satisfy the $\partial\bar\partial$-Lemma by deforming structures that do not satisfy this last
condition.

We consider the differential complexes $(B_{\Gamma,t}^{\bullet,\bullet},\db)$ and $(C_{\Gamma,t}^{\bullet,\bullet},\partial,\db)$ and the techniques introduced in~\cite{angella-kasuya-2} to compute
the Dolbeault and Bott-Chern cohomologies of  small deformations. In particular, by means of the computation of the cohomologies of the
complex-parallelizable structure on the Nakamura manifold, the non-closedness of the $\partial\db$-Lemma property under holomorphic
deformations is proved in~\cite[Corollary 6.1]{angella-kasuya-2}.  Using the splitting-type complex geometry on $\mathfrak{s}_{12}$, we extend this result to the following:

\begin{theorem}\label{thm:non-closedness}
 There is an infinite family of complex solvmanifolds $\{X_k\}_{k\in\Z}$  not satisfying the
$\partial\bar\partial$-Lemma and admitting a small holomorphic deformation $\{(X_{k})_t\}_{t\in\Delta_k}$ such that $(X_{k})_t$  does satisfy the
$\partial\bar\partial$-Lemma for every  $t\neq0$.

Moreover, the solvmanifolds $\{(X_{k})_t\}_{t\in\Delta_k},\, k\in\Z$ have holomorphically trivial canonical bundle and are balanced.
\end{theorem}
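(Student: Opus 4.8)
The plan is to realise the family $\{X_k\}_{k\in\Z}$ inside the family $\{X_C\}_{\Imag C\neq 0}$ of Lemma~\ref{lema_lattice}, evaluated at its most degenerate parameters. For each $k\in\Z$ I would set $C_k:=\frac{i}{2k+1}$, fix the lattice $\Gamma_k:=\Gamma_{C_k}$ produced by Lemma~\ref{lema_lattice}, and define $X_k:=X_{C_k}=(G_{12}/\Gamma_k,J_{C_k})$. Since $C_k$ is of the form $\frac{i}{m}$ with $0\neq m\in\Z$, Proposition~\ref{solv_g8} shows that each $X_k$ fails the $\partial\db$-Lemma; the choice $k=-1$ gives $C_{-1}=-i$, i.e. the complex-parallelizable structure of~\cite{angella-kasuya-2}, while $k=0$ gives $C_0=i$, i.e. the abelian structure of~\cite{andrada-barberis-dotti}. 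This fixes the central fibres and reduces the theorem to exhibiting, for each $k$, one holomorphic deformation with the stated properties.

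Next I would deform $J_{C_k}$ keeping the smooth solvmanifold $G_{12}/\Gamma_k$ (hence the lattice) fixed. Writing $\{\omega^1,\omega^2,\omega^3\}$ for the co-frame satisfying~\eqref{htcb} at $C=C_k$, and $Z_3$ for the $(1,0)$-vector dual to $\omega^3$, I would take $\omega_t^1:=\omega^1$, $\omega_t^2:=\omega^2$, $\omega_t^3:=\omega^3+t\,\omega^{\bar3}$ for $t$ in a small disc $\Delta_k$ around $0$. This is the deformation attached to the Beltrami differential $-t\,\omega^{\bar3}\otimes Z_3$, which is linear, hence holomorphic, in $t$, so $\{(X_k)_t:=(G_{12}/\Gamma_k,J_t)\}_{t\in\Delta_k}$ is a genuine holomorphic family with central fibre $X_k$. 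Rewriting $d\omega_t^j$ in the co-frame $\{\omega_t^j,\omega_t^{\bar j}\}$ shows that the equations retain the splitting-type shape $d\omega_t^1=\tilde A(t)\,\omega_t^{13}+\tilde B(t)\,\omega_t^{1\bar3}$, $d\omega_t^2=-\tilde A(t)\,\omega_t^{23}-\tilde B(t)\,\omega_t^{2\bar3}$, $d\omega_t^3=0$, so each $(X_k)_t$ carries a splitting-type structure with underlying algebra $\frs_{12}$. A further computation gives $\omega_t^{123}=\omega^{123}+t\,\omega^{12\bar3}$ and $d\omega_t^{123}=0$, whence by Remark~\ref{remark-trivialbundle} (compare Remark~\ref{remark_s12_one}) the canonical bundle stays holomorphically trivial along the family; balancedness of every $(X_k)_t$ is then immediate from Proposition~\ref{prop:balanced-N} with $N=\C^2$ (equivalently, from the invariant balanced metrics of Section~\ref{sec:hermitian}). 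This settles the ``moreover'' part of the statement.

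The crux --- and the step I expect to be the main obstacle --- is to prove that $(X_k)_t$ satisfies the $\partial\db$-Lemma for \emph{every} $t\neq0$. Here I would follow the strategy of~\cite{angella-kasuya-2}, computing the Dolbeault and Bott-Chern cohomologies of the deformed solvmanifolds by means of the finite-dimensional deformed complexes $(B^{\bullet,\bullet}_{\Gamma_k,t},\db)$ and $(C^{\bullet,\bullet}_{\Gamma_k,t},\partial,\db)$ of~\cite{kasuya-mathz,angella-kasuya-1,angella-kasuya-2}. The mechanism to exploit is the one already visible in the proof of Proposition~\ref{solv_g8}: at $t=0$ the degeneracy $C_k=\frac{i}{2k+1}$ forces the surplus triviality relations $\beta_1^{C_k}|_{\Gamma_k}=\gamma_1^{C_k}|_{\Gamma_k}=1$, which enlarge $B^{\bullet,\bullet}_{\Gamma_k}$ and break the numerical equality needed for the $\partial\db$-Lemma, whereas for $t\neq0$ these extra generators should disappear, leaving only the unavoidable relation $(\beta_1\gamma_1)|_{\Gamma_k}=1$. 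I would then verify, for $t\neq0$, either the hypotheses of Lemma~\ref{lema_B} (vanishing of $\partial$ and $\db$ on $B^{\bullet,\bullet}_{\Gamma_k,t}$ together with $B^{q,p}_{\Gamma_k,t}=\overline{B^{p,q}_{\Gamma_k,t}}$) or, equivalently, the numerical criterion $\sum_{p+q=r}h^{p,q}_{\mathrm{BC}}=\sum_{p+q=r}h^{p,q}_{\db}=b_r$ that certifies the $\partial\db$-Lemma.

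The delicate points I anticipate are twofold. First, one must control the deformed complexes $B^{\bullet,\bullet}_{\Gamma_k,t}$ uniformly on a punctured disc, so that the $\partial\db$-Lemma holds for all $t\neq0$ rather than only generically; this amounts to checking that no additional product of the deformed characters becomes trivial on $\Gamma_k$ as $t$ leaves the origin. Second, one must reconcile the fact that $\Gamma_k$ is tuned to the bad central value $C_k$ with the fact that, for $t\neq0$, the structure $J_t$ is genuinely perturbed away from the one-parameter family $\{J_C\}$ (indeed $\tilde B(t)-\tilde A(t)\neq-2i$ for $t\neq0$). Since for $k=-1$ the entire computation specialises to~\cite[Corollary~6.1]{angella-kasuya-2}, the expectation is that the same bookkeeping, run with $C_k=\frac{i}{2k+1}$ in place of $-i$, delivers the conclusion for every $k\in\Z$.
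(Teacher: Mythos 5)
Your proposal follows essentially the same route as the paper's proof: the same central fibres $X_k=X_{C_k}$ with $C_k=\frac{i}{2k+1}$ and lattices from Lemma~\ref{lema_lattice} (non-$\partial\db$ by Proposition~\ref{solv_g8}), the same deformation $\omega^3_t=\omega^3+t\,\omega^{\bar 3}$, the same verification of splitting type, $d\omega^{123}_t=0$ and balancedness, and the same reduction of the $t\neq 0$ cohomology computation to the finite-dimensional complexes of \cite{kasuya-mathz,angella-kasuya-1,angella-kasuya-2}. The paper's key point, which you anticipate in your last paragraph, is that the deformed complexes $B^{\bullet,\bullet}_{\Gamma_{C_k},t}$ and $C^{\bullet,\bullet}_{\Gamma_{C_k},t}$ are isomorphic as (bi-)differential objects for every $k$ --- the $t$-dependence sits entirely in the differentials acting on a fixed space of generators, not in extra character-triviality relations appearing or disappearing --- so the tables of \cite[\S 4]{angella-kasuya-2} for $C=-i$ apply verbatim and give the $\partial\db$-Lemma exactly for $t\neq 0$.
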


%

\begin{proof}
Consider the infinite family $\{X_k\}_{k\in\Z}$ where $X_k:=X_{C_k}$, $C_k=\frac{i}{2k+1}$ and $X_C$ is the complex solvmanifold described in Lemma~\ref{lema_lattice}. By
Proposition~\ref{solv_g8}, $X_k$ does not satisfy the $\partial\db$-Lemma for any $k\in\Z$.

We consider an open disc $\Delta_k:=\Delta(0,\epsilon_{k})\subset\C$ for $\epsilon_{k}>0$ small
enough,
and the family $\left\{(X_{k})_t\right\}_{t\in\Delta_k}$, $k\in\Z$, of holomorphic deformations of $X_k$  given by the $(1,0)$-co-frame
$ \left\{ \omega^1_{C_k,t} := \omega_{C_k}^1
\;,\;\omega^2_{C_k,t} := \omega^2_{C_k} \;,\;\omega^3_{C_k,t} := \omega^3_{C_k} + t\, \bar\omega^3_{C_k} \right\} $. For simplicity, we will denote $\omega^i_{C_k,t}$ as $\omega^i_{k,t}$.  The structure equations become:
$$ \left\{ \begin{array}{l}
            d \omega^1_{k,t} \;=\; - \frac{(C_k - i) + (C_k + i)\, \bar t}{1 - |t|^2}\, \omega^{13}_{k,t} - \frac{(C_k + i) + (C_k - i)\, t}{1 - |t|^2}\, \omega^{1\bar3}_{k,t},  \\[5pt]
            d \omega^2_{k,t} \;=\;   \frac{(C_k - i) + (C_k + i)\, \bar t}{1 - |t|^2}\, \omega^{23}_{k,t} + \frac{(C_k + i) + (C_k - i)\, t}{1 - |t|^2}\, \omega^{2\bar3}_{k,t},\\[5pt]
            d \omega^3_{k,t} \;=\; 0.
           \end{array}\right. \;$$

It is easy to see that the previous complex structures are of splitting type, and therefore, there exist balanced metrics (see Table~\ref{table:summary-metrics}).  Moreover, since $d\omega^{123}_{k,t} = 0$, the solvmanifolds have holomorphically trivial canonical bundle.

Taking into account the characters $\alpha_1^C$, $\alpha_2^C$, $\beta_1^C$, $\beta_2^C$, $\gamma_1^C$, $\gamma_2^C$ described in \eqref{caracteres_g8} and \eqref{beta_g8}, we define the generators of
the complex $B_{\Gamma_{C_k},t}^{\bullet,\bullet}=\wedge^{\bullet,\bullet}\langle\varphi^1_t,\varphi^2_t,\varphi^3_t,\tilde\varphi^1_t,\tilde\varphi^2_t,\tilde\varphi^3_t\rangle$ associated to the
complex solvmanifold $(X_{k})_t$:
$$ \left\{ \begin{array}{l}
        \varphi^1_{t} \;:=\; \beta_1^{C_k}\, \omega^1_{k,t} \;=\; \exp\left( - 2\,i\, z_3\right) \, d z_1, \\[5pt]
        \varphi^2_{t} \;:=\; \beta_2^{C_k}\, \omega^2_{k,t} \;=\; \exp\left(   2\,i\, z_3\right) \, d z_2, \\[5pt]
        \varphi^3_{t} \;:=\; \omega^3_{k,t} \;=\; d z_3 + t\, d \bar z_3,
       \end{array} \right.\qquad \left\{ \begin{array}{l}
        \tilde\varphi^1_{t} \;:=\; \gamma_1^{C_k}\, \omega^{\bar1}_{k,t} \;=\; \exp\left( - 2\,i\, z_3\right) \, d \bar z_1, \\[5pt]
        \tilde\varphi^2_{t} \;:=\; \gamma_2^{C_k}\, \omega^{\bar2}_{k,t} \;=\; \exp\left(   2\,i\, z_3\right) \, d \bar z_2, \\[5pt]
        \tilde\varphi^3_{t} \;:=\; \omega^{\bar3}_{k,t} \;=\; d \bar z_3 + \bar t\, d z_3,
       \end{array} \right. \;$$
where $\varphi_{t}^1$, $\varphi_{t}^2$, and $\varphi_{t}^3$ have bi-degree $(1,0)$ and $\tilde\varphi_{t}^1$, $\tilde\varphi_{t}^2$, and $\tilde\varphi_{t}^3$ have bi-degree $(0,1)$, as
explicitly described in Table \ref{table:B-JtA}. Consider also the bi-differential bi-graded double
complex
$$ C^{\bullet,\bullet}_{\Gamma_{C_k},t} \;:=\; B^{\bullet,\bullet}_{\Gamma_{C_k},t} + \overline{B^{\bullet,\bullet}_{\Gamma_{C_k},t}} $$
of vector spaces, where
$$
\begin{array}{llll}
\bar \varphi_{t}^{3}=\tilde \varphi_{t}^{3}, \quad&
\tilde\varphi_{t}^1\wedge\tilde\varphi_{t}^2=\bar\varphi_{t}^1\wedge\bar\varphi_{t}^2,\quad&
\varphi_{t}^1\wedge\bar{\tilde\varphi}_{t}^1=0, \quad&
\varphi_{t}^2\wedge\bar{\tilde\varphi}_{t}^2=0, \\
\varphi_{t}^1\wedge\tilde\varphi_{t}^2=\bar{\tilde\varphi}_{t}^1\wedge\bar\varphi_{t}^2, \quad&
\varphi_{t}^2\wedge\tilde\varphi_{t}^1=\bar{\tilde\varphi}_{t}^2\wedge\bar\varphi_{t}^1, \quad&
\varphi_{t}^1\wedge\bar\varphi_{t}^1=\bar{\tilde\varphi}_{t}^1\wedge\tilde\varphi_{t}^1, \quad&
\varphi_{t}^2\wedge\bar\varphi_{t}^2=\bar{\tilde\varphi}_{t}^2\wedge\tilde\varphi_{t}^2,
\end{array}
$$
as explicitly described in Table \ref{table:B-JtA}.  We compute the structure equations:

\begin{center}
\resizebox{\textwidth}{!}{
\begin{tabular}{>{$}l<{$}>{$}l<{$}}
\left\{ \begin{array}{l}
d\varphi_{t}^1 \;=\;   \frac{2\, i}{1-|t|^2}\, \varphi_{t}^1\wedge\varphi_{t}^3 - \frac{2\, t\,i}{1-|t|^2}\, \varphi_{t}^1\wedge\bar\varphi_{t}^3, \\[5pt]
d\varphi_{t}^2 \;=\; - \frac{2\, i}{1-|t|^2}\, \varphi_{t}^2\wedge\varphi_{t}^3 + \frac{2\, t\, i}{1-|t|^2}\, \varphi_{t}^2\wedge\bar\varphi_{t}^3,\\[5pt]
d\varphi_{t}^3 \;=\; 0, \\[5pt]
d\bar{\tilde\varphi}_{t}^1 \;=\; - \frac{2\, i}{1-|t|^2}\, \bar{\tilde\varphi}_{t}^1\wedge\bar\varphi_{t}^3 + \frac{2\, \bar t\, i}{1-|t|^2}\, \bar{\tilde\varphi}_{t}^1 \wedge \varphi^3_{t},
\\[5pt]
d\bar{\tilde\varphi}_{A_k,t}^2 \;=\;   \frac{2\, i}{1-|t|^2}\, \bar{\tilde\varphi}_{t}^2\wedge\bar\varphi_{t}^3 - \frac{2\, \bar t\, i}{1-|t|^2}\, \bar{\tilde\varphi}_{t}^2 \wedge
\varphi_{t}^3,
\end{array} \right. &
\left\{ \begin{array}{l}
d\tilde\varphi_{t}^1 \;=\; - \frac{2\, i}{1-|t|^2}\, \varphi_{t}^3\wedge\tilde\varphi_{t}^1 - \frac{2\, t\, i}{1-|t|^2}\, \tilde\varphi_{t}^1\wedge\bar\varphi_{t}^3, \\[5pt]
d\tilde\varphi_{t}^2 \;=\;   \frac{2\, i}{1-|t|^2}\, \varphi_{t}^3\wedge\tilde\varphi_{t}^2 + \frac{2\, t\, i}{1-|t|^2}\, \tilde\varphi_{t}^2\wedge\bar\varphi_{t}^3, \\[5pt]
d\tilde\varphi_{t}^3 \;=\; 0, \\[5pt]
d\bar\varphi_{t}^1 \;=\; - \frac{2\, i}{1-|t|^2}\, \bar\varphi_{t}^1\wedge\bar\varphi_{t}^3 - \frac{2\, \bar t\, i}{1-|t|^2}\, \varphi_{t}^3\wedge\bar\varphi_{t}^1, \\[5pt]
d\bar\varphi_{t}^2 \;=\;   \frac{2\, i}{1-|t|^2}\, \bar\varphi_{t}^2\wedge\bar\varphi_{t}^3 + \frac{2\, \bar t\, i}{1-|t|^2}\, \varphi_{t}^3\wedge\bar\varphi_{t}^2.
\end{array} \right.
\end{tabular}
}\\
\end{center}

By \cite[Corollary 1.3]{kasuya-mathz}, \cite[Theorem 2.16]{angella-kasuya-1} and \cite[Theorem 1.1, Theorem 1.2]{angella-kasuya-2} (with respect to the Hermitian metric
$g := \varphi_{t}^1\odot\tilde\varphi_{t}^1 + \varphi_{t}^2\odot\tilde\varphi_{t}^2 + \varphi^3_{t}\odot\bar\varphi^3_{t}$), such complexes allow to compute the Dolbeault cohomology and
the Bott-Chern cohomology of $(X_{k})_t$.

Note that the differential bi-graded algebra $\left( B^{\bullet,\bullet}_{\Gamma_{C_k},t},\, \bar\partial \right)$ and the bi-differential double complex
$\left( C^{\bullet,\bullet}_{\Gamma_{C_k},t},\, \partial,\, \bar\partial \right)$ of vector spaces do not depend on $C_k$; in particular, for any $C_k=\frac{i}{2k+1}$ varying $k\in\Z$, they are
isomorphic to the
corresponding object with $k=-1$, that is, $C=-i$. Hence, it follows that the computations and the results in \cite[\S4]{angella-kasuya-2} still hold for any $C_k$.
More precisely, we recall in Table~\ref{table:BC-JAt} the harmonic representatives in the Dolbeault cohomology, respectively Bott-Chern cohomology, with
respect to the metric $g$.

In Table \ref{table:nak-def-cohomologies}, we summarize the results of the computations by giving the Betti, Hodge, and Bott-Chern numbers of the complex solvmanifolds $X_k$ and of its
 small deformations $(X_{k})_t$. From the results summarized in Table~\ref{table:BC-JAt}, we get that $(X_{k})_t$ satisfies the
$\partial\bar\partial$-Lemma for any $t\neq0$.
\end{proof}

\begin{table}[!hb]
 \centering
\begin{tabular}{||>{$\mathbf\bgroup}c<{\mathbf\egroup$} || >{$}c<{$} | >{$}c<{$} >{$}c<{$} | >{$}c<{$} >{$}c<{$} ||}
\toprule
\multirow{2}{*}{\textnormal{$\dim_\C H_{\sharp}^{\bullet,\bullet}(X_{k})_t$} }& & \multicolumn{2}{c|}{$t = 0$} & \multicolumn{2}{c|}{$t \neq 0$} \\
& dR & \bar\partial & BC & \bar\partial & BC \\
\toprule
(0,0) & 1 & 1 & 1 & 1 & 1 \\
\midrule[0.02em]
(1,0) & \multirow{2}{*}{2} & 3 & 1 & 1 & 1 \\[3pt]
(0,1) & & 3 & 1 & 1 &  1 \\
\midrule[0.02em]
(2,0) & \multirow{3}{*}{5} & 3 & 3 & 1 & 1 \\[3pt]
(1,1) & & 9 & 7 & 3 &  3 \\[3pt]
(0,2) & & 3 & 3 & 1 &  1 \\
\midrule[0.02em]
(3,0) & \multirow{4}{*}{8} & 1 & 1 & 1 & 1 \\[3pt]
(2,1) & & 9 & 9 & 3 &  3 \\[3pt]
(1,2) & & 9 & 9 & 3 &  3 \\[3pt]
(0,3) & & 1 & 1 & 1 &  1 \\
\midrule[0.02em]
(3,1) & \multirow{3}{*}{5} & 3 & 3 & 1 & 1 \\[3pt]
(2,2) & & 9 & 11 & 3 & 3 \\[3pt]
(1,3) & & 3 & 3 & 1 &  1 \\
\midrule[0.02em]
(3,2) & \multirow{2}{*}{2} & 3 & 5 & 1 & 1 \\[3pt]
(2,3) & & 3 & 5 & 1 & 1 \\
\midrule[0.02em]
(3,3) & 1 & 1 & 1 & 1 & 1 \\
\bottomrule
\end{tabular}
\medskip
\caption{Summary of the dimensions of the de Rham, Dolbeault, and Bott-Chern cohomologies of the complex solvmanifolds $(X_{k})_t$.}
\label{table:nak-def-cohomologies}
\end{table}


\begin{landscape}
\begin{table}[!hb]
 \centering
\scalebox{0.75}{
\begin{tabular}{||>{$\mathbf\bgroup}l<{\mathbf\egroup$} || >{$}l<{$} >{$\mathbf\bgroup}l<{\mathbf\egroup$} >{$\mathbf\bgroup}l<{\mathbf\egroup$} ||
>{$}l<{$}>{$\mathbf\bgroup}l<{\mathbf\egroup$}>{$\mathbf\bgroup}l<{\mathbf\egroup$}||>{$}l<{$}>{$\mathbf\bgroup}l<{\mathbf\egroup$}>{$\mathbf\bgroup}l<{\mathbf\egroup$}||}
\toprule
 & B_{\Gamma_C}^{\bullet,\bullet} (C=\frac{i}{2k+1},\, k\in\Z) & h^{\bullet,\bullet}_\db &b_\bullet& B_{\Gamma_C}^{\bullet,\bullet}  (C=\frac{i}{2k},\, 0\neq k\in\Z)&h^{\bullet,\bullet}_\db&b_\bullet&
B_{\Gamma_C}^{\bullet,\bullet}  (C\neq\frac{i}{k},\, 0\neq k\in\Z)&h^{\bullet,\bullet}_\db&b_\bullet\\
\toprule
(1,0) & \C \left\langle \varphi^1,\varphi^2,\varphi^3 \right\rangle &3&\multirow{2}{*}{2}& \C \left\langle \varphi^3 \right\rangle&1&\multirow{2}{*}{2}& \C \left\langle \varphi^3
\right\rangle&1&\multirow{2}{*}{2} \\[5pt]
(0,1) & \C \left\langle \tilde\varphi^1,\tilde\varphi^2,\varphi^{\bar3} \right\rangle &3&& \C \left\langle \varphi^{\bar 3} \right\rangle&1&& \C \left\langle \varphi^{\bar3} \right\rangle&1&
\\[5pt]\hline
(2,0) & \C\left\langle \varphi^{12},\varphi^{13},\varphi^{23}\right\rangle &3&\multirow{3}{*}{5}&
\C\left\langle\varphi^{12}\right\rangle&1&\multirow{3}{*}{5}&\C\left\langle\varphi^{12}\right\rangle&1&\multirow{3}{*}{5}\\[5pt]
(1,1) & \C\left\langle   \varphi^{1\tilde1},\varphi^{1\tilde2},\varphi^{1\bar3}, \varphi^{2\tilde1},\varphi^{2\tilde2},\varphi^{2\bar3},
\varphi^{3\tilde1},\varphi^{3\tilde2},\varphi^{3\bar3}\right\rangle &9&&\C\left\langle
\varphi^{1\tilde1},\varphi^{1\tilde2},\varphi^{2\tilde1},\varphi^{2\tilde2},\varphi^{3\bar3}\right\rangle&5&&\C\left\langle
\varphi^{1\tilde2},\varphi^{2\tilde1},\varphi^{3\bar3}\right\rangle&3&\\[5pt]
(0,2) & \C\left\langle \varphi^{\tilde 1\tilde 2},\varphi^{\tilde 1\bar3},\varphi^{\tilde 2\bar3}\right\rangle &3&& \C\left\langle \varphi^{\tilde 1\tilde 2}\right\rangle&1&& \C\left\langle
\varphi^{\tilde 1\tilde 2}\right\rangle&1&\\[5pt]\hline
(3,0) & \C\left\langle \varphi^{123} \right\rangle &1&\multirow{4}{*}{8}& \C\left\langle  \varphi^{123} \right\rangle&1&\multirow{4}{*}{8}& \C\left\langle  \varphi^{123}
\right\rangle&1&\multirow{4}{*}{8}\\[5pt]
(2,1) & \C\left\langle
\varphi^{12\tilde1},\varphi^{12\tilde2},\varphi^{12\bar3},\varphi^{13\tilde1},\varphi^{13\tilde2},\varphi^{13\bar3},\varphi^{23\tilde1},\varphi^{23\tilde2},\varphi^{23\bar3}\right\rangle &9&&
\C\left\langle \varphi^{12\bar3},\varphi^{13\tilde1},\varphi^{13\tilde2},\varphi^{23\tilde1},\varphi^{23\tilde2}\right\rangle&5&& \C\left\langle
\varphi^{12\bar3},\varphi^{13\tilde2},\varphi^{23\tilde1}\right\rangle&3&\\[5pt]
(1,2) & \C\left\langle
\varphi^{1\tilde1\tilde2},\varphi^{1\tilde1\bar3},\varphi^{1\tilde2\bar3},\varphi^{2\tilde1\tilde2},\varphi^{2\tilde1\bar3},\varphi^{2\tilde2\bar3},\varphi^{3\tilde1\tilde2},\varphi^{3\tilde1\bar3},
\varphi^{3\tilde2\bar3}\right\rangle &9&& \C\left\langle \varphi^{1\tilde1\bar3},\varphi^{1\tilde2\bar3},\varphi^{2\tilde1\bar3},\varphi^{2\tilde2\bar3},\varphi^{3\tilde1\tilde2}\right\rangle&5&&
\C\left\langle \varphi^{1\tilde2\bar3},\varphi^{2\tilde1\bar3},\varphi^{3\tilde1\tilde2}\right\rangle&3&\\[5pt]
(0,3) & \C\left\langle  \varphi^{\tilde1\tilde2\bar3} \right\rangle &1&& \C\left\langle  \varphi^{\tilde1\tilde2\bar3} \right\rangle&1&& \C\left\langle  \varphi^{\tilde1\tilde2\bar3}
\right\rangle&1&\\[5pt]\hline
(3,1) & \C\left\langle \varphi^{123\tilde1},\varphi^{123\tilde2},\varphi^{123\bar3} \right\rangle &3&\multirow{3}{*}{5}& \C\left\langle \varphi^{123\bar3}  \right\rangle&1&\multirow{3}{*}{5}&
\C\left\langle \varphi^{123\bar3}  \right\rangle&1&\multirow{3}{*}{5}\\[5pt]
(2,2) & \C\left\langle
\varphi^{12\tilde1\tilde2},\varphi^{12\tilde1\bar3},\varphi^{12\tilde2\bar3},\varphi^{13\tilde1\tilde2},\varphi^{13\tilde1\bar3},\varphi^{13\tilde2\bar3},\varphi^{23\tilde1\tilde2},\varphi^{
23\tilde1\bar3},\varphi^{23\tilde2\bar3}\right\rangle&9&& \C\left\langle \varphi^{12\tilde1\tilde2}, \varphi^{13\tilde1\bar3}, \varphi^{13\tilde2\bar3}, \varphi^{23\tilde1\bar3},
\varphi^{23\tilde2\bar3} \right\rangle&5&& \C\left\langle \varphi^{12\tilde1\tilde2}, \varphi^{13\tilde2\bar3}, \varphi^{23\tilde1\bar3}\right\rangle&3&\\[5pt]
(1,3) & \C\left\langle  \varphi^{1\tilde1\tilde2\bar3},\varphi^{2\tilde1\tilde2\bar3},\varphi^{3\tilde1\tilde2\bar3} \right\rangle &3&& \C\left\langle \varphi^{3\tilde1\tilde2\bar3} \right\rangle&1&&
\C\left\langle \varphi^{3\tilde1\tilde2\bar3} \right\rangle&1&\\[5pt]\hline
(3,2) & \C\left\langle  \varphi^{123\tilde1\tilde2},\varphi^{123\tilde1\bar3},\varphi^{123\tilde2\bar3}\right\rangle &3&\multirow{2}{*}{2}& \C\left\langle
\varphi^{123\tilde1\tilde2}\right\rangle&1&\multirow{2}{*}{2}& \C\left\langle   \varphi^{123\tilde1\tilde2}\right\rangle&1&\multirow{2}{*}{2}\\[5pt]
(2,3) & \C\left\langle   \varphi^{12\tilde1\tilde2\bar3},\varphi^{13\tilde1\tilde2\bar3},\varphi^{23\tilde1\tilde2\bar3}\right\rangle &3&& \C\left\langle
\varphi^{12\tilde1\tilde2\bar3}\right\rangle&1&& \C\left\langle   \varphi^{12\tilde1\tilde2\bar3}\right\rangle&1&\\[5pt]\hline
(3,3) & \C\left\langle   \varphi^{123\tilde1\tilde2\bar3}\right\rangle &1&1& \C\left\langle    \varphi^{123\tilde1\tilde2\bar3}\right\rangle&1&1& \C\left\langle
\varphi^{123\tilde1\tilde2\bar3}\right\rangle&1&1\\[5pt]
\bottomrule
\end{tabular}
}
\medskip
\caption{The double complex $B_{\Gamma_C}^{\bullet,\bullet}$ for computing the Dolbeault cohomology of the complex solvmanifolds $X_k$,
described in Lemma~\ref{lema_lattice}.}\label{tabla_g8}
\end{table}
\end{landscape}

\newpage
\begin{landscape}

\begin{table}[!hb]
 \centering
 \resizebox{\textwidth}{!}{
\begin{tabular}{||>{$\mathbf\bgroup}l<{\mathbf\egroup$} || >{$}l<{$}|| >{$}l<{$}||}
\toprule
 & B^{\bullet,\bullet}_{\Gamma_{C_k}, t}& C^{\bullet,\bullet}_{\Gamma_{C_k}, t}\\
\toprule
(0,0) & \C \left\langle 1 \right\rangle & \C \left\langle 1 \right\rangle \\
\midrule[0.02em]
(1,0) & \C \left\langle \varphi_t^1,\; \varphi_t^2,\; \varphi_t^3 \right\rangle  & \C \left\langle \varphi_t^1,\; \varphi_t^2,\; \varphi_t^3,\; \varphi_t^{\bar{\tilde1}},\; \varphi_t^{\bar{\tilde2}} \right\rangle  \\[5pt]
(0,1) & \C \left\langle \varphi_t^{\tilde1},\; \varphi_t^{\tilde2},\; \varphi_t^{\bar3} \right\rangle & \C \left\langle \varphi_t^{\tilde1},\; \varphi_t^{\tilde2},\; \varphi_t^{\bar3},\; \varphi_t^{\bar1},\; \varphi_t^{\bar2} \right\rangle \\
\midrule[0.02em]
(2,0) & \C \left\langle \varphi_t^{12},\; \varphi_t^{13},\; \varphi_t^{23} \right\rangle & \C \left\langle \varphi_t^{12},\; \varphi_t^{13},\; \varphi_t^{23},\; \varphi_t^{\bar{\tilde1}3},\; \varphi_t^{\bar{\tilde2}3} \right\rangle \\[5pt]
(1,1) & \C \left\langle \varphi_t^{1\tilde1},\; \varphi_t^{1\tilde2},\; \varphi_t^{1\bar3},\; \varphi_t^{2\tilde1},\; \varphi_t^{2\tilde2},\; \varphi_t^{2\bar3},\; \varphi_t^{3\tilde1},\;
\varphi_t^{3\tilde2},\; \varphi_t^{3\bar3} \right\rangle &
\C \left\langle \varphi_t^{1\tilde1},\;\varphi_t^{1\tilde2},\; \varphi_t^{1\bar3},\; \varphi_t^{2\tilde1},\; \varphi_t^{2\tilde2},\; \right.\\[5pt]
&&\left.
\quad\varphi_t^{2\bar3},\; \varphi_t^{3\tilde1},\; \varphi_t^{3\tilde2},\; \varphi_t^{3\bar3},\; \varphi_t^{\bar{\tilde1}\bar1},\; \varphi_t^{3\bar1},\; \varphi_t^{\bar{\tilde2}\bar2},\; \varphi_t^{3\bar2},\; \varphi_t^{\bar{\tilde1}\bar3},\;
\varphi_t^{\bar{\tilde2}\bar3} \right\rangle \\[5pt]
(0,2) & \C \left\langle \varphi_t^{\tilde1\tilde2},\; \varphi_t^{\tilde1\bar3},\; \varphi_t^{\tilde2\bar3} \right\rangle & \C \left\langle \varphi_t^{\tilde1\tilde2},\; \varphi_t^{\tilde1\bar3},\; \varphi_t^{\tilde2\bar3},\; \varphi_t^{\bar1\bar3},\; \varphi_t^{\bar2\bar3} \right\rangle \\
\midrule[0.02em]
(3,0) & \C \left\langle \varphi_t^{123} \right\rangle & \C \left\langle \varphi_t^{123} \right\rangle \\[5pt]
(2,1) & \C \left\langle \varphi_t^{12\tilde1},\; \varphi_t^{12\tilde2},\; \varphi_t^{12\bar3},\; \varphi_t^{13\tilde1},\; \varphi_t^{13\tilde2},\; \varphi_t^{13\bar3},\; \varphi_t^{23\tilde1},\;
\varphi_t^{23\tilde2},\; \varphi_t^{23\bar3} \right\rangle & \C \left\langle \varphi_t^{12\tilde1},\; \varphi_t^{12\tilde2},\; \varphi_t^{12\bar3},\; \varphi_t^{13\tilde1},\; \varphi_t^{13\tilde2},\; \varphi_t^{13\bar3},\; \varphi_t^{23\tilde1},\;
\varphi_t^{23\tilde2},\; \right.\\[5pt]
&&\left.
\quad
\varphi_t^{23\bar3},\; \varphi_t^{\bar{\tilde1}\bar{\tilde2}\bar1},\; \varphi_t^{\bar{\tilde1}3\bar1},\; \varphi_t^{\bar{\tilde1}\bar{\tilde2}\bar2},\;
\varphi_t^{\bar{\tilde2}3\bar2},\;  \varphi_t^{\bar{\tilde1}3\bar3},\; \varphi_t^{\bar{\tilde2}3\bar3} \right\rangle \\[5pt]
(1,2) & \C \left\langle \varphi_t^{1\tilde1\tilde2},\; \varphi_t^{1\tilde1\bar3},\; \varphi_t^{1\tilde2\bar3},\; \varphi_t^{2\tilde1\tilde2},\; \varphi_t^{2\tilde1\bar3},\;
\varphi_t^{2\tilde2\bar3},\; \varphi_t^{3\tilde1\tilde2},\; \varphi_t^{3\tilde1\bar3},\; \varphi_t^{3\tilde2\bar3} \right\rangle & \C \left\langle \varphi_t^{1\tilde1\tilde2},\; \varphi_t^{1\tilde1\bar3},\; \varphi_t^{1\tilde2\bar3},\; \varphi_t^{2\tilde1\tilde2},\; \varphi_t^{2\tilde1\bar3},\;
\varphi_t^{2\tilde2\bar3},\; \varphi_t^{3\tilde1\tilde2},\; \varphi_t^{3\tilde1\bar3},\; \varphi_t^{3\tilde2\bar3},\; \right.\\[5pt]
&&\left.
\quad\varphi_t^{\bar{\tilde1}\bar1\bar2},\; \varphi_t^{\bar{\tilde2}\bar1\bar2},\;
\varphi_t^{\bar{\tilde1}\bar1\bar3},\; \varphi_t^{3\bar1\bar3},\; \varphi_t^{\bar{\tilde2}\bar2\bar3},\; \varphi_t^{3\bar2\bar3} \right\rangle \\[5pt]
(0,3) & \C \left\langle \varphi_t^{\tilde1\tilde2\bar3} \right\rangle & \C \left\langle \varphi_t^{\tilde1\tilde2\bar3} \right\rangle \\
\midrule[0.02em]
(3,1) & \C \left\langle \varphi_t^{123\tilde1},\; \varphi_t^{123\tilde2},\; \varphi_t^{123\bar3} \right\rangle & \C \left\langle \varphi_t^{123\tilde1},\; \varphi_t^{123\tilde2},\; \varphi_t^{123\bar3},\; \varphi_t^{\bar{\tilde1}\bar{\tilde2}3\bar1},\; \varphi_t^{\bar{\tilde1}\bar{\tilde2}3\bar2}
\right\rangle \\[5pt]
(2,2) & \C \left\langle \varphi_t^{12\tilde1\tilde2},\; \varphi_t^{12\tilde1\bar3},\; \varphi_t^{12\tilde2\bar3},\; \varphi_t^{13\tilde1\tilde2},\; \varphi_t^{13\tilde1\bar3},\;
\varphi_t^{13\tilde2\bar3},\; \varphi_t^{23\tilde1\tilde2},\; \right. & \C \left\langle \varphi_t^{12\tilde1\tilde2},\; \varphi_t^{12\tilde1\bar3},\; \varphi_t^{12\tilde2\bar3},\; \varphi_t^{13\tilde1\tilde2},\; \varphi_t^{13\tilde1\bar3},\;
\varphi_t^{13\tilde2\bar3},\; \varphi_t^{23\tilde1\tilde2},\; \varphi_t^{23\tilde1\bar3},\; \right.\\[5pt]
&\quad\left.\varphi_t^{23\tilde1\bar3},\; \varphi_t^{23\tilde2\bar3} \right\rangle&\left.
\quad\varphi_t^{23\tilde2\bar3},\; \varphi_t^{\bar{\tilde1}3\bar1\bar2},\;
\varphi_t^{\bar{\tilde2}3\bar1\bar2},\; \varphi_t^{\bar{\tilde1}\bar{\tilde2}\bar1\bar3},\; \varphi_t^{\bar{\tilde1}\bar{\tilde2}\bar2\bar3},\; \varphi_t^{\bar{\tilde1}3\bar1\bar3},\;
\varphi_t^{\bar{\tilde2}3\bar2\bar3} \right\rangle \\[5pt]
(1,3) & \C \left\langle \varphi_t^{1\tilde1\tilde2\bar3},\; \varphi_t^{2\tilde1\tilde2\bar3},\; \varphi_t^{3\tilde1\tilde2\bar3} \right\rangle & \C \left\langle \varphi_t^{1\tilde1\tilde2\bar3},\; \varphi_t^{2\tilde1\tilde2\bar3},\; \varphi_t^{3\tilde1\tilde2\bar3},\; \varphi_t^{\bar{\tilde1}\bar1\bar2\bar3},\;
\varphi_t^{\bar{\tilde2}\bar1\bar2\bar3} \right\rangle \\
\midrule[0.02em]
(3,2) & \C \left\langle \varphi_t^{123\tilde1\tilde2},\; \varphi_t^{123\tilde1\bar3},\; \varphi_t^{123\tilde2\bar3} \right\rangle & \C \left\langle \varphi_t^{123\tilde1\tilde2},\; \varphi_t^{123\tilde1\bar3},\; \varphi_t^{123\tilde2\bar3},\; \varphi_t^{\bar{\tilde1}\bar{\tilde2}3\bar1\bar3},\;
\varphi_t^{\bar{\tilde1}\bar{\tilde2}3\bar2\bar3} \right\rangle \\[5pt]
(2,3) & \C \left\langle \varphi_t^{12\tilde1\tilde2\bar3},\; \varphi_t^{13\tilde1\tilde2\bar3},\; \varphi_t^{23\tilde1\tilde2\bar3} \right\rangle & \C \left\langle \varphi_t^{12\tilde1\tilde2\bar3},\; \varphi_t^{13\tilde1\tilde2\bar3},\; \varphi_t^{23\tilde1\tilde2\bar3},\; \varphi_t^{\bar{\tilde1}3\bar1\bar2\bar3},\;
\varphi_t^{\bar{\tilde2}3\bar1\bar2\bar3} \right\rangle \\
\midrule[0.02em]
(3,3) & \C \left\langle \varphi_t^{123\tilde1\tilde2\bar3} \right\rangle & \C \left\langle \varphi_t^{123\tilde1\tilde2\bar3} \right\rangle \\
\bottomrule
\end{tabular}}
\bigskip
\caption{The (double-)complexes $B^{\bullet,\bullet}_{\Gamma_{C_k}, t}$ in \cite[Corollary 4.2]{kasuya-mathz} and  $C^{\bullet,\bullet}_{\Gamma_{C_k}, t}$ in \cite[Theorem 2.16]{angella-kasuya-1} for computing the Dolbeault and the Bott-Chern cohomology of the complex solvmanifolds $(X_k)_t$.}
\label{table:B-JtA}
\end{table}

\end{landscape}

\begin{landscape}

\begin{table}[!hb]
 \centering
\scalebox{0.63}{
\begin{tabular}{||>{$\mathbf\bgroup}l<{\mathbf\egroup$} || >{$}l<{$}  || >{$}l<{$} || >{$}l<{$}  || >{$}l<{$}  ||}
\toprule
& H^{\bullet,\bullet}_{{\db}_{J_{C_k,t=0}}}(X) & H^{\bullet,\bullet}_{{\db}_{J_{C_k,t\neq0}}}(X) & H^{\bullet,\bullet}_{{BC}_{J_{C_k,t=0}}}(X) & H^{\bullet,\bullet}_{{BC}_{J_{C_k,t\neq0}}}(X) \\

\toprule
(0,0) & \C \left\langle 1 \right\rangle  & \C \left\langle 1 \right\rangle & \C \left\langle 1 \right\rangle  & \C \left\langle 1 \right\rangle  \\
\midrule[0.02em]
(1,0) &   \C \left\langle \varphi_0^{1},\, \varphi_0^{2},\, \varphi_0^{3} \right\rangle &
\C \left\langle \varphi_t^{3} \right\rangle & \C \left\langle \varphi_0^{3} \right\rangle &  \C \left\langle \varphi_t^{3} \right\rangle   \\[5pt]
(0,1) & \C \left\langle \varphi_0^{\tilde1},\, \varphi_0^{\tilde2},\, \varphi_0^{\bar3} \right\rangle
& \C \left\langle \varphi_t^{\bar3} \right\rangle & \C \left\langle \varphi_0^{\bar3} \right\rangle  & \C \left\langle \varphi_t^{\bar3} \right\rangle   \\
\midrule[0.02em]
(2,0) & \C \left\langle \varphi_0^{12},\,
\varphi_0^{13},\, \varphi_0^{23} \right\rangle  & \C \left\langle \varphi_t^{12} \right\rangle & \C \left\langle \varphi_0^{12},\, \varphi_0^{13},\, \varphi_0^{23} \right\rangle  & \C \left\langle \varphi_t^{12} \right\rangle  \\[5pt]
(1,1) & \C \left\langle \varphi_0^{1\tilde1},\,
\varphi_0^{1\tilde2},\, \varphi_0^{1\bar3},\, \varphi_0^{2\tilde1},\, \varphi_0^{2\tilde2},\, \varphi_0^{2\bar3},\, \varphi_0^{3\tilde1},\,
\varphi_0^{3\tilde2},\, \varphi_0^{3\bar3} \right\rangle  & \C \left\langle \varphi_t^{1\tilde2},\, \varphi_t^{2\tilde1},\, \varphi_t^{3\bar3} \right\rangle & \C \left\langle \varphi_0^{1\tilde2},\, \varphi_0^{2\tilde1},\, \varphi_0^{3\tilde1},\, \varphi_0^{3\tilde2},\, \varphi_0^{3\bar3},\, \varphi_0^{\bar{\tilde1}\bar3},\,
\varphi_0^{\bar{\tilde2}\bar3} \right\rangle  & \C \left\langle \varphi_t^{1\tilde2},\, \varphi_t^{2\tilde1},\, \varphi_t^{3\bar3} \right\rangle  \\[5pt]
(0,2) & \C \left\langle \varphi_0^{\tilde1\tilde2},\, \varphi_0^{\tilde1\bar3},\, \varphi_0^{\tilde2\bar3} \right\rangle &  \C \left\langle \varphi_t^{\tilde1\tilde2} \right\rangle & \C \left\langle \varphi_0^{\tilde1\tilde2},\, \varphi_0^{\bar1\bar3},\, \varphi_0^{\bar2\bar3} \right\rangle  & \C \left\langle \varphi_t^{\tilde1\tilde2} \right\rangle  \\
\midrule[0.02em]
(3,0) & \C \left\langle \varphi_0^{123} \right\rangle &  \C \left\langle \varphi_t^{123} \right\rangle  & \C \left\langle \varphi_0^{123} \right\rangle &  \C \left\langle \varphi_t^{123} \right\rangle \\[5pt]
(2,1) & \C \left\langle \varphi_0^{12\tilde1},\, \varphi_0^{12\tilde2},\, \varphi_0^{12\bar3},\, \varphi_0^{13\tilde1},\, \varphi_0^{13\tilde2},\, \varphi_0^{13\bar3},\, \varphi_0^{23\tilde1},\,
\varphi_0^{23\tilde2},\, \varphi_0^{23\bar3} \right\rangle &  \C \left\langle \varphi_t^{12\bar3},\, \varphi_t^{13\tilde2},\, \varphi_t^{23\tilde1} \right\rangle & \C \left\langle \varphi_0^{12\bar3},\, \varphi_0^{13\tilde1},\, \varphi_0^{13\tilde2},\, \varphi_0^{13\bar3},\, \varphi_0^{23\tilde1},\, \varphi_0^{23\tilde2},\,
\varphi_0^{23\bar3},\,
\varphi_0^{\bar{\tilde1}3\bar3},\, \varphi_0^{\bar{\tilde2}3\bar3} \right\rangle  & \C \left\langle \varphi_t^{12\bar3},\, \varphi_t^{13\tilde2},\, \varphi_t^{23\tilde1} \right\rangle \\[5pt]
(1,2) & \C \left\langle \varphi_0^{1\tilde1\tilde2},\, \varphi_0^{1\tilde1\bar3},\, \varphi_0^{1\tilde2\bar3},\, \varphi_0^{2\tilde1\tilde2},\, \varphi_0^{2\tilde1\bar3},\,
\varphi_0^{2\tilde2\bar3},\, \varphi_0^{3\tilde1\tilde2},\, \varphi_0^{3\tilde1\bar3},\, \varphi_0^{3\tilde2\bar3} \right\rangle  & \C \left\langle \varphi_t^{1\tilde2\bar3},\,
\varphi_t^{2\tilde1\bar3},\, \varphi_t^{3\tilde1\tilde2} \right\rangle & \C \left\langle \varphi_0^{1\tilde2\bar3},\, \varphi_0^{2\tilde1\bar3},\, \varphi_0^{3\tilde1\tilde2},\, \varphi_0^{3\tilde1\bar3},\, \varphi_0^{3\tilde2\bar3},\,
\varphi_0^{\bar{\tilde1}\bar1\bar3},\, \varphi_0^{3\bar1\bar3},\, \varphi_0^{\bar{\tilde2}\bar2\bar3},\, \varphi_0^{3\bar2\bar3} \right\rangle  & \C \left\langle \varphi_t^{1\tilde2\bar3},\,
\varphi_t^{2\tilde1\bar3},\, \varphi_t^{3\tilde1\tilde2} \right\rangle  \\[5pt]
(0,3) & \C \left\langle \varphi_0^{\tilde1\tilde2\bar3} \right\rangle  & \C \left\langle \varphi_t^{\tilde1\tilde2\bar3} \right\rangle & \C \left\langle \varphi_0^{\tilde1\tilde2\bar3} \right\rangle  & \C \left\langle \varphi_t^{\tilde1\tilde2\bar3} \right\rangle  \\
\midrule[0.02em]
(3,1) & \C \left\langle \varphi_0^{123\tilde1},\, \varphi_0^{123\tilde2},\, \varphi_0^{123\bar3} \right\rangle  & \C \left\langle \varphi_t^{123\bar3} \right\rangle & \C \left\langle \varphi_0^{123\tilde1},\, \varphi_0^{123\tilde2},\, \varphi_0^{123\bar3} \right\rangle &  \C \left\langle \varphi_t^{123\bar3} \right\rangle   \\[5pt]
(2,2) & \C \left\langle \varphi_0^{12\tilde1\tilde2},\, \varphi_0^{12\tilde1\bar3},\, \varphi_0^{12\tilde2\bar3},\, \varphi_0^{13\tilde1\tilde2},\, \varphi_0^{13\tilde1\bar3},\,
\varphi_0^{13\tilde2\bar3},\, \varphi_0^{23\tilde1\tilde2},\, \varphi_0^{23\tilde1\bar3},\, \varphi_0^{23\tilde2\bar3} \right\rangle  & \C \left\langle \varphi_t^{12\tilde1\tilde2},\,
\varphi_t^{13\tilde2\bar3},\, \varphi_t^{23\tilde1\bar3} \right\rangle &\C \left\langle \varphi_0^{12\tilde1\tilde2},\, \varphi_0^{13\tilde1\tilde2},\, \varphi_0^{13\tilde1\bar3},\, \varphi_0^{13\tilde2\bar3},\, \varphi_0^{23\tilde1\tilde2},\,
\varphi_0^{23\tilde1\bar3},\, \varphi_0^{23\tilde2\bar3},\, \varphi_0^{\bar{\tilde1}\bar{\tilde2}\bar1\bar3},\, \varphi_0^{\bar{\tilde1}\bar{\tilde2}\bar2\bar3},\,
\varphi_0^{\bar{\tilde1}3\bar1\bar3},\, \varphi_0^{\bar{\tilde2}3\bar2\bar3} \right\rangle  & \C \left\langle \varphi_t^{12\tilde{1}\tilde{2}},\, \varphi_t^{13\tilde{2}\bar{3}},\,
\varphi_t^{23\tilde{1}\bar{3}} \right\rangle   \\[5pt]
(1,3) & \C \left\langle \varphi_0^{1\tilde1\tilde2\bar3},\, \varphi_0^{2\tilde1\tilde2\bar3},\, \varphi_0^{3\tilde1\tilde2\bar3} \right\rangle  & \C \left\langle \varphi_t^{3\tilde1\tilde2\bar3}
\right\rangle &\C \left\langle \varphi_0^{1\tilde1\tilde2\bar3},\, \varphi_0^{2\tilde1\tilde2\bar3},\, \varphi_0^{3\tilde1\tilde2\bar3} \right\rangle &  \C \left\langle \varphi_t^{3\tilde1\tilde2\bar3}
\right\rangle   \\
\midrule[0.02em]
(3,2) & \C \left\langle \varphi_0^{123\tilde1\tilde2},\, \varphi_0^{123\tilde1\bar3},\, \varphi_0^{123\tilde2\bar3} \right\rangle  & \C \left\langle \varphi_t^{123\tilde1\tilde2}
\right\rangle  & \C \left\langle \varphi_0^{123\tilde1\tilde2},\, \varphi_0^{123\tilde1\bar3},\, \varphi_0^{123\tilde2\bar3},\, \varphi_0^{\bar{\tilde1}23\bar1\bar3},\, \varphi_0^{\bar{\tilde1}23\bar2\bar3}
\right\rangle  & \C \left\langle \varphi_t^{123\tilde1\tilde2} \right\rangle
\\[5pt]
(2,3) & \C \left\langle \varphi_0^{12\tilde1\tilde2\bar3},\, \varphi_0^{13\tilde1\tilde2\bar3},\, \varphi_0^{23\tilde1\tilde2\bar3} \right\rangle  & \C \left\langle
\varphi_t^{12\tilde1\tilde2\bar3} \right\rangle  & \C \left\langle \varphi_0^{12\tilde1\tilde2\bar3},\, \varphi_0^{13\tilde1\tilde2\bar3},\, \varphi_0^{23\tilde1\tilde2\bar3},\, \varphi_0^{\bar{\tilde1}3\bar1\bar2\bar3},\,
\varphi_0^{\bar{\tilde2}3\bar1\bar2\bar3} \right\rangle  & \C \left\langle \varphi_t^{12\tilde1\tilde2\bar3} \right\rangle \\
\midrule[0.02em]
(3,3) & \C \left\langle \varphi_0^{123\tilde1\tilde2\bar3} \right\rangle  & \C \left\langle \varphi_t^{123\tilde1\tilde2\bar3} \right\rangle & \C \left\langle \varphi_0^{123\tilde1\tilde2\bar3} \right\rangle  & \C \left\langle \varphi_t^{123\tilde1\tilde2\bar3} \right\rangle   \\
\bottomrule
\end{tabular}
}
\medskip
\caption{The harmonic representatives with respect to the metric $g := \varphi_t^1\odot\tilde\varphi_t^1 + \varphi_t^2\odot\tilde\varphi_t^2 + \varphi_t^3\odot\tilde\varphi_t^3$ of the Dolbeault and the Bott-Chern
cohomology of the complex solvmanifolds $(X_k)_t$.}
\label{table:BC-JAt}
\end{table}

\end{landscape}

\appendix\section{Reduction of parameters}\label{apendice} 

\noindent In this appendix we show how to reduce the value of the parameters in the algebras $\frs_5^{\alpha},\, \frs_6^{\alpha,\,\beta},\, \frs_7^{\alpha},\, \frs_8^{\alpha},\, \frs_{10}^{\alpha, \beta},\, \frs_{11}^{\alpha}$ according to Theorem~\ref{thm:main-thm}.

\medskip

Let us consider the following changes from a basis $\{e^1,\ldots, e^6\}$ to another real basis $\{f^1,\ldots, f^6\}$, where $\lambda$ is a non-zero real number:

\begin{enumerate}
\item[\textbf{ChA}] $f^i = e^i,\, i=1,3,5,6,\quad f^2 = e^4,\quad f^4 = e^2.$ \medskip

\item[\textbf{ChB}] $f^i = e^i,\, i=1,2,5,6,\quad f^3 = e^4,\quad f^4 = e^3.$ \medskip

\item[\textbf{ChC}] $f^i = e^i,\, i=1,3,6,\quad f^2 = -e^2,\quad f^4 = -e^4,\quad  f^5 = -e^5.$ \medskip

\item[\textbf{ChD}] $f^i = e^i,\, i=1,2,3,5,6,\quad f^4 = -e^4.$ \medskip

\item[\textbf{ChE}] $f^1 = e^3,\quad f^2 = -e^4,\quad f^3 = e^1,\quad f^4 = -e^2,\quad f^5 = -\lambda e^5,\quad f^6 = e^6.$ \medskip

\item[\textbf{ChF}] $f^i = e^i,\, i=2,4,6,\quad f^1 = -e^5,\quad f^3 = e^1,\quad f^5 = e^3.$ \medskip

\item[\textbf{ChG}]  $f^1 = e^3,\quad f^2 = e^4,\quad f^3 = e^1,\quad f^4 = e^2,\quad  f^i = e^i,\quad i=5,6.$ \medskip

\item[\textbf{ChH}] $f^1 = e^3,\quad  f^2 = e^4,\quad f^3 = e^1,\quad f^4 = e^2,\quad f^5 = \lambda e^5,\quad f^6 = -e^6.$ \medskip

\end{enumerate}

\medskip

\noindent   \textbf{Case }  $\frs_5^{\alpha}$:  Consider $\frs_5^{\alpha}$ where $\alpha\in\mathbb R$, with structure equations  \begin{equation*}\label{s5}
\frs_5^{\alpha} = (e^{15}, e^{25}, -e^{35} + \alpha\,e^{45}, -\alpha\,e^{35}-e^{45},0,0),\quad \alpha\in \mathbb R.
\end{equation*}
Then:
\begin{itemize}
\item If $\alpha =0$, change ChA gives the isomorphism $\frs_5^{0}\cong\frs_4$. \medskip
\item Change ChB gives the isomorphism $\frs_5^{\alpha}\cong\frs_5^{-\alpha}$.
\end{itemize}

\medskip

Therefore, we can suppose $\alpha>0$.

\bigskip

\noindent   \textbf{Case }  $\frs_6^{\alpha, \beta}$:  Consider $\frs_6^{\alpha, \beta}$ where $\alpha, \beta\in\mathbb R$, with structure equations  \begin{equation*}\label{s6}
\frs_6^{\alpha, \beta} = (\alpha\,e^{15} + e^{25}, -e^{15}+\alpha\,e^{25}, -\alpha\,e^{35} + \beta\,e^{45}, -\beta\,e^{35}-\alpha\,e^{45},0,0),\quad \alpha, \beta\in \mathbb R.
\end{equation*}
Then:
\begin{itemize}
\item Change ChC gives the isomorphism $\frs_6^{\alpha, \beta}\cong\frs_6^{-\alpha, \beta}$. \medskip
\item Change ChD gives the isomorphism $\frs_6^{\alpha, \beta}\cong\frs_6^{\alpha, -\beta}$. \medskip
\item If $\alpha=0$, $\frs_6^{0,\beta}\cong\frs_7^{\beta}$. \medskip
\item If $\beta=0$ and $\alpha\neq 0$, change ChE with $\lambda = \alpha$ gives the isomorphism $\frs_6^{\alpha, 0}\cong\frs_5^{\frac1\alpha}$. \medskip
\item If $\beta=1$, $\frs_6^{\alpha,1}\cong\frs_8^{\alpha}$. \medskip
\item If $\beta\neq 0, 1$, change ChE with $\lambda = \beta$ gives the isomorphism $\frs_6^{\alpha, \beta}\cong\frs_6^{\frac{\alpha}{\beta}, \frac1\beta}$. \medskip
\end{itemize}

\medskip

Therefore, we can suppose $\alpha>0$ and $\beta\in (0,1)$.

\bigskip

\noindent   \textbf{Case }  $\frs_7^{\alpha}$: Consider $\frs_7^{\alpha}$ where $\alpha\in\mathbb R$, with structure equations  \begin{equation*}\label{s7}
\frs_7^{\alpha} = (e^{25}, -e^{15}, \alpha\,e^{45}, -\alpha\,e^{35},0,0),\quad \alpha\in \mathbb R.
\end{equation*}
Then: 
\begin{itemize}
\item If $\alpha=0$, change ChF gives the isomorphism $\frs_7^0\cong\frs_2$. \medskip
\item Change ChD gives the isomorphism $\frs_7^{\alpha}\cong\frs_7^{-\alpha}$. \medskip
\item Change ChE with $\lambda = \alpha$ gives the isomorphism $\frs_7^{\alpha}\cong\frs_7^{\frac1\alpha}$. \medskip
\end{itemize}

\medskip

Therefore, we can suppose $0<\alpha\leq 1$.

\bigskip

\noindent   \textbf{Case }  $\frs_8^{\alpha}$:  Consider $\frs_8^{\alpha}$ where $\alpha\in\mathbb R$, with structure equations  \begin{equation*}\label{s8}
\frs_8^{\alpha} = (\alpha e^{15} + e^{25}, -e^{15} + \alpha e^{25}, -\alpha e^{35} + e^{45}, -e^{35} - \alpha e^{45},0,0),\quad \alpha\in \mathbb R.
\end{equation*}
Then: 
\begin{itemize}
\item Observe that $\frs_8^0\cong\frs_{7}^1$. \medskip
\item Change ChC gives the isomorphism $\frs_8^{\alpha}\cong\frs_8^{-\alpha}$. \medskip
\end{itemize}

\medskip

Therefore, we can suppose $\alpha>0$.

\bigskip

\noindent   \textbf{Case } $\frs_{11}^{\alpha}$:  Consider $\frs_{11}^{\alpha}$ where $\alpha\in\mathbb R$, with structure equations  \begin{equation*}\label{s11}
\frs_{11}^{\alpha} = (e^{16} - e^{25}, e^{15} + e^{26}, -e^{36} - \alpha e^{45}, \alpha e^{35} - e^{46},0,0),\quad \alpha\in \mathbb R.
\end{equation*}
Then: 
\begin{itemize}
\item If $\alpha =0$, change ChG gives the isomorphism $\frs_{11}^0\cong\frs_9$. \medskip
\item Change ChB gives the isomorphism $\frs_{11}^{\alpha}\cong\frs_{11}^{-\alpha}$. \medskip
\item Change ChH with $\lambda = \alpha$ gives the isomorphism $\frs_{11}^{\alpha}\cong\frs_{11}^{\frac1\alpha}$. \medskip
\item Change ChB gives the isomorphism $\frs_{11}^{1}\cong\frs_{12}$. \medskip
\end{itemize}

\medskip

Therefore, we can suppose $\alpha\in (0,1)$.

\end{document}